\newcommand{\ep}{\underline{\epsilon}}
\newcommand{\onen}{{\mathbf 1}_{n}}
\newcommand{\one}{{\mathbf 1}_{-n}}
\newcommand{\onenn}[1]{{\mathbf 1}_{#1}}
\newcommand{\onem}{{\mathbf 1}_{m}}
\newcommand\rE{{\sf{E}}}
\newcommand{\F}{\cal{F}}
\newcommand{\rib}{{r}}
\newcommand\rF{{\sf{F}}}
\newcommand{\maps}{\colon}
\newcommand{\co}{{\rm co}}
\newcommand{\Int}{{(\rib\E\onen)'}}
\newcommand{\Kom}{{\rm Kom}}
\newcommand{\Com}{{\rm Com}}
\newcommand{\Gr}{\cat{Flag}_{N}}
\newcommand{\Grn}[1]{\cat{Flag}_{#1}}
\newtheorem{theorem}{Theorem}
\newtheorem{conjecture}[theorem]{Conjecture}
\newtheorem{lemma}[theorem]{Lemma}
\newcommand{\U}{\dot{{\bf U}}}
\newcommand{\Ucat}{\cal{U}}
\newcommand{\UcatD}{\dot{\cal{U}}}
\newcommand{\refequal}[1]{\xy {\ar@{=}^{#1}
(-1,0)*{};(1,0)*{}};
\endxy}
\newcommand{\cat}[1]{\ensuremath{\mbox{\bfseries {\upshape {#1}}}}}
\newcommand{\Hom}{{\rm Hom}}
\renewcommand{\to}{\rightarrow}
\def\shuffle{\,\raise 1pt\hbox{$\scriptscriptstyle\cup{\mskip
               -4mu}\cup$}\,}
\theoremstyle{definition}
\newtheorem{thm}{Theorem}[section]
\newtheorem{lem}[thm]{Lemma}
\newtheorem{prop}[thm]{Proposition}
\numberwithin{equation}{section}
\def\emph#1{{\sl #1\/}}
\let\hat=\widehat
\let\tilde=\widetilde
\let\phi=\varphi
\let\theta=\vartheta
\let\epsilon=\varepsilon
\def\C{{\mathbbm C}}
\def\N{{\mathbbm N}}
\def\Z{{\mathbbm Z}}
\def\cal#1{\mathcal{#1}}%
\def\1{\mathbbm{1}}%
\def\Dot{\mathrm{Dot}}
\def\nn{\notag}
\def\la{\langle}
\def\ra{\rangle}
\newcommand{\Ucupr}{\;\;
    \vcenter{\xy (-2,3)*{}; (2,3)*{} **\crv{(-2,-1) & (2,-1)}?(1)*\dir{>};
            (2,-3)*{};(-2,3)*{}; \endxy} \;\; }
\newcommand{\Ucross}{\;\;
    \vcenter{\xy {\ar (2.5,-2.5)*{};(-2.5,2.5)*{}}; {\ar (-2.5,-2.5)*{};(2.5,2.5)*{}};
    (4,0)*{};(-4,0)*{};\endxy} \;\; }
\newcommand{\lowrru}[1]{\xybox{%
  (-8,0)*{};
  (8,0)*{};
  (-6,-18)*{};(6,-9)*{} **\crv{(-6,-13) & (6,-15)} ?(1)*\dir{>};
  (6,-9)*{};(6,0)*{}  **\dir{-} ?(.3)*\dir{ }+(2,0)*{\scs {\bf j}};
}}
\newcommand{\lowllu}[1]{\xybox{%
  (-8,0)*{};
  (8,0)*{};
  (6,-18)*{};(-6,-9)*{} **\crv{(6,-13) & (-6,-15)} ?(1)*\dir{>};
  (-6,-9)*{};(-6,0)*{}  **\dir{-} ?(.3)*\dir{ }+(-2,0)*{\scs {\bf j}};
}}
\newcommand{\bbdl}[1]{\xybox{%
  (2,0);(0,-8) **\crv{(2,-2)&(0,-6)}; ?(.5)*\dir{>}
}}
\newcommand{\bbdlu}[1]{\xybox{%
  (2,0);(0,-8) **\crv{(2,-2)&(0,-6)}; ?(.5)*\dir{<}
}}
\newcommand{\bbdr}[1]{\xybox{%
  (-2,0);(0,-8) **\crv{(-2,-2)&(0,-6)}; ?(.5)*\dir{>}
}}
\newcommand{\bbdru}[1]{\xybox{%
  (-2,0);(0,-8) **\crv{(-2,-2)&(0,-6)}; ?(.5)*\dir{<}
}}
\def\cal#1{\mathcal{#1}}
\def \Z {\mathbbm{Z}}
\def \N {\mathbbm{N}}
\def \E {\mathcal{E}}
\def \F {\mathcal{F}}
\def \U {\mathcal{U}}
\def \C {\mathcal{C}}
\def \Span{\operatorname{Span}}
\newcommand{\xto}[1]{{\overset{#1}{\longrightarrow}}}
\newcommand{\bigb}[1]{
\begin{pspicture}(0,0)
 \rput(0,0){\psframebox[framearc=.5,fillstyle=solid]{\small $#1$}}
\end{pspicture}}
\newcommand\nc{\newcommand}
\nc\rnc{\renewcommand}
\nc\block[2]{\begin{#1}#2\end{#1}}
\nc\comm[1]{\ {\tt[* #1 *]}\ }
\nc\note[1]{{\small{{\tt [note:}\ {#1} {]}}}}
\nc\Kar{\operatorname{Kar}}
\nc\sgn{\operatorname{sgn}}
\nc\Center{\operatorname{Center}}
\nc\ad{{\operatorname{ad}}}
\nc\op{{\operatorname{op}}}
\nc\End{\operatorname{End}}
\nc\Spec{\operatorname{Spec}}
\nc\even{{\operatorname{even}}}
\nc\odd{{\operatorname{odd}}}
\nc\Gal{\operatorname{Gal}}
\nc\Aut{\operatorname{Aut}}
\nc\half{{\frac12}}
\nc\halfof[1]{{\frac{#1}2}}
\nc\bb[2]{\biggl[\begin{matrix}{#1}\\{#2}\end{matrix}\biggr]}
\nc\BB[2]{\{#1\}_{#2}}
\nc\projto{\underset{\text{proj}}{\longrightarrow}}
\nc\no[1]{}
\nc\ok{\comm{ok?}}
\nc\ho{{\hat\otimes }}
\nc\zzzvert {\;|\;}
\nc\zzzcolon {\colon\thinspace}
\nc\zzzsharp {\sharp}
\nc\plim{\varprojlim}
\nc\np{\newpage}
\nc\modone {{\mathbf 1}}
\nc\g{{\mathfrak g}}
\nc\modk {{\mathbf k}}
\nc\modL {{\mathcal L}}
\nc\modN {{\mathbb N}}
\nc\modQ {{\mathbb Q}}
\nc\modR {{\mathcal R}}
\nc\modZ {{\mathbb Z}}
\nc\ul{\underline}
\nc\simeqto{\overset{\simeq}{\longrightarrow }}
\nc\trl{\triangleleft}
\nc\trr{\triangleright}
\nc\fig[1]{Figure~\ref{#1}}
\nc\FI[2]{\begin{figure}
    \begin{center}\input{#1.pstex_t}\end{center}
    \caption{#2}
    \label{#1}
  \end{figure}}
\nc\FIGURE[2]{\begin{figure}
    \boxed{\tt fig: #1}
    \caption{#2}
    \label{fig:#1}
    \end{figure}}
\nc\xysquare[8]{\xymatrix{
    #1 \ar[r]#5 \ar[d]#6 & #2 \ar[d]#7 \\
    #3 \ar[r]#8          & #4
  }
  }
\nc\modC {{\mathcal C}}
\nc\modD {{\mathcal D}}
\nc\modE {{\mathcal E}}
\nc\modV {{\mathcal V}}
\nc\Vect{\mathbf{Vect}}
\nc\Spaces{\mathbf{Spaces}}
\nc\Mod{\mathbf{Mod}}
\nc\Mor{\operatorname{Mor}}
\nc\K{\mathcal {K}}
\nc\CC{\mathbf{C}}
\nc\congto{\xto{\cong}}
\newcommand{\scs}{\scriptstyle}
\newcommand{\lbub}{\xybox{
  (-3,0)*{};(3,0)*{} **\crv{(-3,4) & (3,4)} ?(.0)*\dir{>};
  (3,0)*{};(-3,0)*{} **\crv{(3,-4) & (-3,-4)} ?(.1)*{\ast};
  (-5,-5)*{}; (5,5)*{};
}}
\newcommand{\rbub}{\xybox{
  (-3,0)*{};(3,0)*{} **\crv{(-3,4) & (3,4)} ?(.0)*\dir{<};
  (3,0)*{};(-3,0)*{} **\crv{(3,-4) & (-3,-4)} ?(.1)*{\ast};
  (-5,-5)*{}; (5,5)*{};
}}
\nc\nempty{\neq\emptyset}
\newcommand\rr{{\sf{r}}}
\nc\diag{\operatorname{diag}}
\begin{document}
\title{A categorification of the ribbon element in quantum $sl(2)$}

\date{\today}

\author{Anna Beliakova}
\address{Universit\"at Z\"urich, Winterthurerstr. 190
CH-8057 Z\"urich, Switzerland}
\email{anna@math.uzh.ch}

\author{Kazuo Habiro}
\address{Research Institute for Mathematical Sciences, Kyoto
  University, Kyoto 606-8502, Japan}
\email{habiro@kurims.kyoto-u.ac.jp}
\begin{abstract}
We define a bicomplex whose Euler characteristic is the 
idempotented version of the ribbon element of
quantum sl(2).
We show that  properties of this bicomplex descend to the
centrality, invertibility and symmetries of the ribbon
element after decategorification.

\end{abstract}

\maketitle
\section{Introduction}
The program of  categorification of quantum groups was initiated
by  Frenkel and carried out by Khovanov and Lauda \cite{KL1}, \cite{KL2},
\cite{KL3} and 
Rouquier \cite{Rou2}.
In \cite{Lau1}, \cite{KLMS} the 2-category $\UcatD$ was constructed
that categorifies the integral idempotented version
of the quantum enveloping algebra of $\mathfrak {sl}_2$.

The objects of $\UcatD$  are natural numbers, interpreted as the
integral weight lattice of $\mathfrak {sl}_2$.
 The 1-morphisms of $\UcatD$ are generated by 
 $\E^{(a)}\onen\la t\ra $ and $\F^{(b)}\onen\la t\ra$,
for all $a,b\in \N$, $n,t\in \Z$, which are lifts
 of the Lusztig divided powers. 
 The 2-morphisms are  $\Z$-linear 
combinations of planar diagrams modulo local relations. The split
Grothendieck group $K_0(\UcatD)$ satisfies
$$K_0(\E^{(a)}\onen\la t\ra)= q^t E^{(a)}1_n
 \quad {\text {and}}\quad 
K_0(\F^{(a)}\onen\la t\ra)=q^t F^{(a)}1_n  $$
and  coincides with the integral
idempotented version of the quantum enveloping algebra 
 ${\bf U}_q (\mathfrak {sl}_2)$ constructed in \cite{BLM}.

On the other hand, there exists
a universal knot invariant \cite{Law} (see also \cite{Ha})
 which takes values 
in the center of quantum $\mathfrak {sl}_2$ and dominates all colored
Jones polynomials. This paper can be considered as a first 
step towards a categorification of the universal link invariant.

The most challenging problem of this program
 is to find a categorical equivalent of
the $R$-matrix, associated by the universal invariant
 to a crossing in a link diagram.
Here we resolve a  simpler problem, we categorify
the ribbon element, which is the universal invariant 
of a self-crossing. The bicomplex we construct in this paper
is conjecturally  an element of the Drinfeld center of $\Com(\UcatD)$ and 
is related to 
a Serre functor on category $\mathcal O$ 
  associated with the longest braid \cite{MS} by Conjecture 3.
Here we denote by $\Kom (\UcatD)$  the category 
of bicomplexes over the 2-category $\UcatD$ and
by  $\Com(\UcatD)$ its homotopy
version.

\nc\rmr{\mathrm{r}}
\nc\sltwo{\mathfrak{sl}_2}

The ribbon element $\rmr$ is an element of the $h$-adic version
$\mathbf{U}_h(\mathfrak{sl}_2)$ of the quantized enveloping algebra $\mathbf{U}_q(\mathfrak{sl}_2)$.
It is given by
\begin{gather*}
{\rm r}= q^{-\frac{H^2}{2}-H} \sum^\infty_{k=0} (-1)^k
q^{-kH-k}(q^{-2};q^{-2})_k F^{(k)} E^{(k)}
\end{gather*}
where $(q^a;q^b)_k=(1-q^a)(1-q^{a+b})\dots (1-q^{a+(k-1)b})$.
Thus, the idempotented version of $\rmr$ is
\begin{align}
  \label{e1}
{\rm r}1_n &= q^{-\frac{n^2}{2}-n} \sum^\infty_{k=0} (-1)^k
q^{-kn-k}(q^{-2};q^{-2})_k F^{(k)} E^{(k)}1_n\\
&= q^{-\frac{n^2}{2}+n} \sum^\infty_{k=0} (-1)^k
q^{kn-k}(q^{-2};q^{-2})_k E^{(k)} F^{(k)}1_n,
\end{align}
where $n\in\mathbb{Z}$.

Recall that the ribbon element is central and invertible in  ${\bf U}_q (\mathfrak {sl}_2)$ with the inverse
\begin{align*}
{\rm r}^{-1}1_n &= q^{\frac{n^2}{2}+n} \sum^\infty_{k=0} (-1)^k
q^{kn+k}(q^{2};q^{2})_k F^{(k)} E^{(k)}1_n
\end{align*}

The definition of the bicomplex $\rib\onen\in \Kom(\UcatD)$
categorifying the ribbon element is outlined as follows.  First, we
construct a bicomplex $C_{\bullet,\bullet}$ in $\Kom(\UcatD)$
\begin{equation}\label{bicomplex}
 \xymatrix{C_{00}\ar[d]^{d^V_{00}}
	     &	& &	  \\
      C_{10}\ar[r]^{d^H_{10}}\ar[d]^{d^V_{10}}&C_{11}\ar[d]^{d^V_{11}}&&\\
		 C_{20}\ar[r]^{d^H_{20}}\ar[d]^{d^V_{20}}&
C_{21}\ar[r]^{d^H_{21}}\ar[d]^{d^V_{21}}&
C_{22}\ar[d]^{d^V_{22}}&\\
C_{30}\ar[r]^{d^H_{30}}\ar[d]^{d^V_{30}} &
C_{31}\ar[r]^{d^H_{31}}\ar[d]^{d^V_{31}}
 & C_{32}\ar[r]^{d^H_{32}}\ar[d]^{d^V_{32}}&
C_{33}\ar[d]^{d^V_{33}}\\
\dots &\dots &\dots & \dots}
       \end{equation}
where the $1$-morphism $C_{k,l}\colon n\to n$ in $\UcatD$ (i.e. an
object $C_{k,l}$ in the additive category $\UcatD(n,n)$) is defined by
\begin{gather*}
  C_{k,l}:= \F^{(k)} \E^{(k)}\onen\la -kn-k\ra\otimes \Lambda^l W_k,
\end{gather*}
where $W_k=\Span_\Z\{w_1,\dots, w_k\}$
with $\deg(w_j)=-2j$.  
Note that $C_{00}=\F^{(0)}\E^{(0)}\onen\la 0\ra\otimes \Lambda^0
W_0\cong \onen$.  
The differentials 
$d^V_{k,l}$, $d^H_{k,l}$ are defined 
in Section \ref{rib-def}.  Figure 1 there shows the beginning of this
bicomplex.
The Euler characteristic of the bicomplex $C_{\bullet,\bullet}$ is
equal to the sum part of \eqref{e1}
\begin{gather*}
   \sum^\infty_{k=0} (-1)^k q^{-kn-k}(q^{-2};q^{-2})_k F^{(k)} E^{(k)}1_n.
\end{gather*}

Finally, we shift our bicomplex by $\la -\frac{n^2}{2}-n\ra$ in $q$-degree and by
$[n/2,n/2]$ in homological bi-degree to obtain
\begin{gather*}
  r \onen := C_{\bullet,\bullet}\la -\frac{n^2}{2}-n\ra [n/2,n/2].
\end{gather*}
The $q$-degree shift $\la -\frac{n^2}{2}-n\ra$ corresponds to the factor
$q^{-\frac{n^2}{2}-n}$ in \eqref{e1}.
The homological degree shift $[n/2,n/2]$ is added to make $r\onen$
commute with the $1$-morphisms in $\UcatD$ up to homotopy equivalence
(see Theorem \ref{main}). We  use the convention that the 
Euler characteristic is not affected by a global homological shift.

The bicomplex  $r^{-1}\onen$ is given by inverting
all arrows in \eqref{bicomplex}, rotating 
 diagrams representing differentials  by $180$ degree,
and
 replacing $C_{k,l}$
 with\footnote{The index $L$ stays here for the left adjoint.}

 $$C^L_{k,l}:= \F^{(k)} \E^{(k)}\onen\la kn+k\ra\otimes \Lambda^l {\bar W}_k,\quad
{\bar W}_k=\Span_\Z\{\bar w_1,\dots, \bar w_k\} \,  $$
where ${\rm deg}(\bar w_j)=2j$. 
Finally, we have
\begin{gather*}
  r^{-1} \onen := C^L_{\bullet,\bullet}\la \frac{n^2}{2}+n\ra [-n/2,-n/2].
\end{gather*}
It is an easy check that the Euler characteristics   of
 $\rib\onen$ and $r^{-1}\onen$ in
 $\Kom(\UcatD)$ coincide with ${\rm r}1_n$ and ${\rm r}^{-1}1_n$,
respectively.

Let us list some properties of these  
bicomplexes, which follow directly from the definition.
\begin{itemize}
\item
For each $k$, the horizontal complex 
$C_{k,\bullet}=\oplus_{l\in\mathbb{Z}} C_{k,l}$
 forms a $k$-dimensional
cube and hence is bounded.

\item
The total complex $\operatorname{Tot}(C_{\bullet,\bullet})$ is a
well-defined complex in $\UcatD$, i.e., for each integer $p$,
$\bigoplus_{k+l=p}C_{k,l}\in\operatorname{Ob}(\UcatD)$ is a finite direct sum.

\item
The horizontal differentials are given by certain central elements
in $\End (\F^{(k)}\E^{(k)}\onen)$ 
defined in Section \ref{center} and generated by dots.
\end{itemize}

Recall that $\UcatD$ is the Karoubi envelope 
of the 2-category $\Ucat$ defined 
in \cite{Lau1}.
The generators for 1-morphisms
in $\U$ are $\E\onen \la t\ra$ and $\F\onen \la t\ra$ for $n,t\in \Z$.
The involutive 2-functors $\omega$, $\sigma$ and $\psi$,
 generating
the symmetry group $\mathcal G=(\Z/2\Z)^3$ of 
 $\U$  were also introduced  in \cite{Lau1}.
We recall these definitions and extend them to $\UcatD$ in Section \ref{sym}.
Let us denote by $\mathcal G_1:=\{1, \sigma\omega\}$
 the subgroup of $\mathcal G$.

Throughout this paper
 we adopt the notation $r r^{-1} \onen$ for the tensor 
product of two complexes, i.e. we omit  tensor sign, since
 on  1-morphisms of $\UcatD$  tensor product is just a concatenation.

Our main result is the following:
\begin{theorem}\label{main}
 {\rm (Centrality)}
For any complex $X\in\Com(\U)$, there are isomorphisms
 \[\label{main:3}
\kappa_X: X \rib\onen \to\rib X\onen \quad \quad
\eta_X: X r^{-1}\onen \to r^{-1} X\onen
\]
 of bicomplexes  in $\Com(\UcatD)$.

\vskip1mm
{\rm (Invertibility)} 
The bicomplexes  $r
 r^{-1}\onen$ and $r^{-1} r\onen$ in $^l\Kom(\UcatD)$
are homotopy equivalent to $\onen$. 
\vskip1mm

 {\rm (Symmetry)} 
 The bicomplexes $\rib\onen$ and $r^{-1}\onen$
in $\Kom (\UcatD)$ are invariant under the action of $\mathcal G_1\subset \mathcal G$. Moreover, $r^{-1}\onen=\sigma\omega \psi (r\onen)$ 
is isomorphic to $\psi(r\onen)$.
\end{theorem}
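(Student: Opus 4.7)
My plan is to prove each part separately, exploiting the rigid combinatorial structure of $r\onen$: the horizontal direction is a Koszul-type complex built from dot-generated central elements and the vertical differentials come from an exterior algebra $\Lambda^\bullet W_k$, so the decategorification identities in $\U$ ought to lift in a fairly mechanical way.

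\textbf{Centrality.} Since the 1-morphisms of $\U$ are generated by $\E\onen$, $\F\onen$ and their biadjunction data, it suffices to construct $\kappa_{\E\onen}$ and $\kappa_{\F\onen}$ and then invoke naturality. To move $\E$ past $C_{k,l}=\F^{(k)}\E^{(k)}\onen\langle -kn-k\rangle\otimes \Lambda^l W_k$ I would apply the thick-calculus decomposition of $\E\F^{(k)}\onen$ from \cite{KLMS}, which rewrites $\E\,\F^{(k)}\E^{(k)}\onen$ as an extension with top summand $\F^{(k)}\E^{(k+1)}\onen$ and correction terms living in the $(k{-}1)$-row. These two pieces map respectively into the $k$-row and the $(k{-}1)$-row of $\rib X\onen$; the homological shift $[n/2,n/2]$ is engineered precisely so that the resulting map realigns the weights $n \leftrightarrow n+2$ on the two sides. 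The case $X=\F\onen$ is parallel, cups and caps follow by biadjointness, and $\eta_X$ follows by the same argument applied to $r^{-1}\onen$ (or from Symmetry via $\psi$).

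\textbf{Invertibility.} I would form $\operatorname{Tot}(rr^{-1}\onen)$ and produce an explicit deformation retract onto $\onen = C_{00}$. Each summand $\F^{(k)}\E^{(k)}\F^{(l)}\E^{(l)}\onen$ in the total bicomplex can be reduced via the $\mathfrak{sl}_2$-relations in $\UcatD$ to a sum of standard divided-power monomials $\F^{(a)}\E^{(a)}\onen$; regrouped in this way, $\operatorname{Tot}(rr^{-1}\onen)$ splits as the identity $\onen$ plus a family of subcomplexes whose Euler characteristics assemble into the algebraic identity ${\rm r}{\rm r}^{-1}1_n=1_n$. Each of these subcomplexes should be contractible by a Gaussian-elimination argument matching the Koszul differential on $\Lambda^\bullet W_k$ against the combinatorics of the $(q^{\pm 2};q^{\pm 2})_k$ factors. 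The \emph{main obstacle} is bookkeeping in this reduction: one has to track degree shifts and signs carefully enough to exhibit the cancellations as genuine isomorphisms in $\UcatD$, not just numerical equalities after decategorification. The case $r^{-1}r\onen$ is handled symmetrically.

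\textbf{Symmetry.} The action of $\sigma$, $\omega$, $\psi$ on $\UcatD$ will be recalled in Section \ref{sym}; on a 1-morphism $\F^{(k)}\E^{(k)}\onen$ the involution $\sigma\omega$ produces $\E^{(k)}\F^{(k)}\onen$, which by \cite{KLMS} decomposes as $\F^{(k)}\E^{(k)}\onen$ plus lower terms. I expect these lower terms to cancel exactly against the contribution of the $\Lambda^l W_k$ factor, whose internal grading $\deg w_j=-2j$ was rigged for precisely this purpose; once one checks that the dot-generated horizontal differentials and the vertical Koszul differentials are preserved by $\sigma\omega$ (up to isomorphism), this yields $\sigma\omega(r\onen)\cong r\onen$. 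For the identity $r^{-1}\onen = \sigma\omega\psi(r\onen)$, note that $\psi$ reverses arrow orientations, rotates 2-morphism diagrams by $180^\circ$ and negates $q$-shifts; applied to $r\onen$ this is literally the recipe given for $r^{-1}\onen$, except that $W_k$ is replaced by $\bar W_k$ and the homological shift becomes $[-n/2,-n/2]$. Combined with the $\sigma\omega$-invariance just established, this gives both $\mathcal G_1$-invariance of $r^{-1}\onen$ and the isomorphism $r^{-1}\onen\cong\psi(r\onen)$.
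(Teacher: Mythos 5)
Your Centrality outline is broadly in the spirit of the paper's argument: the object $\F^{(k)}\E^{(k+1)}\onen$ you single out is exactly what the paper packages as the intermediate bicomplex $\Int$, onto which both $\E r\onen$ and $r\E\onen$ are shown to retract (Theorems \ref{Er} and \ref{re}), with $\kappa_\E$ obtained by composing the two retractions. One caution: you should not ``invoke naturality'' to extend from generators to arbitrary $X$ --- naturality is stated only as Conjecture 2 in the paper. What is actually needed, and what the paper uses, is compatibility with \emph{composition of $1$-morphisms} (horizontal tensoring of the constructed maps), which holds by construction and does not require the conjectural compatibility with all $2$-morphisms.

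The Invertibility part has a genuine gap. You propose to ``form $\operatorname{Tot}(rr^{-1}\onen)$ and produce an explicit deformation retract onto $\onen$.'' But $r\onen$ is bounded in one homological direction and $r^{-1}\onen$ in the other, so each total chain group of $rr^{-1}\onen$ is an \emph{infinite} direct sum, and $\UcatD$ does not admit infinite direct sums --- this object simply does not exist in $\Kom(\UcatD)$. The paper flags this explicitly and replaces $\Kom(\UcatD)$ by the inverse limit $^l\Kom(\UcatD) = \varprojlim \Kom^b(\UcatD_N)$ over Schur quotients. The actual proof then sidesteps the ``bookkeeping'' you correctly identify as the obstacle: it applies the $2$-functor $\Gamma_N$ to $\Gr$, shows that $\Gamma_N(r\onen)$ acts as a degree-shifted identity on $\Com(\Gr)$ (Theorem \ref{flag}), and pulls the result back through the inverse-limit equivalence of \cite{BL}. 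A head-on Gaussian elimination would first have to be regularized to even make sense, and then still needs the inverse-limit formalism to land in the correct category.

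The Symmetry part rests on a misreading of what $\sigma\omega$ does. Applied to the $1$-morphism $\F^{(k)}\E^{(k)}\onen$, the functor $\omega$ alone gives $\E^{(k)}\F^{(k)}\one$, but $\sigma$ is \emph{contravariant on $1$-morphisms} and reverses the order of the factors while also negating the weight, so the composite $\sigma\omega$ sends $\F^{(k)}\E^{(k)}\onen$ back to $\F^{(k)}\E^{(k)}\onen$ --- the chain groups of $\sigma\omega(r\onen)$ coincide with those of $r\onen$ on the nose, and there is no $\E^{(k)}\F^{(k)}$-versus-$\F^{(k)}\E^{(k)}$ decomposition to cancel against $\Lambda^l W_k$. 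The real subtlety is that $\sigma\omega$ reflects and re-orients the $2$-morphism diagrams, so the \emph{differentials} of $\sigma\omega(r\onen)$ are a priori different from those of $r\onen$. The paper resolves this by constructing in Section \ref{old} an explicit isomorphism $H_{k,l}$ on the exterior factors $\Lambda^l W_k$, built from complete and elementary symmetric polynomials on the middle strand, such that the conjugated bicomplex $\tilde r\onen$ has manifestly $\sigma\omega$-invariant differentials; the required isomorphism $r\onen \to \sigma\omega(r\onen)$ is then $\sigma\omega(H^{-1})\circ H$. Your sketch does not engage with this and as written would not close.
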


Note that since the bicomplexes $r\onen$ and $r^{-1}\onen$ are bounded
from above and below respectively, and $\UcatD$ does not admit infinite direct 
sums, their tensor product $r r^{-1}\onen$ does not belong to $\Kom(\UcatD)$.
Instead we are using $^l\Kom(\UcatD)$ which is the inverse limit of
the categories of bounded bicomplexes
$\Kom^b(\UcatD_N)$, where $\UcatD_N$ is the  Schur quotient of $\UcatD$
defined by setting ${\bf 1}_{N+2}=0$ (see Section 12  for more details).

Properties listed in
Theorem \ref{main} are natural lifts
of the  properties  of the ribbon element to  higher
categorical level. We would also  expect
the following to hold.


\begin{conjecture}
{\rm (Naturality)} {The chain maps $\kappa_X$ 
and $\eta_X$ are natural, i.e. they commute with
all 2-morphisms of $\UcatD$.}
\vskip1mm

{\rm (Decomposability)} {
For $n\geq 0$ the bicomplex $\rib\onen$ is indecomposable in $\Kom(\UcatD)$,
and the bicomplex $\omega(r\one)$ is isomorphic to a direct sum
of $\rib\onen$ and a contractible complex.
For $n\leq 0$ the bicomplex $\omega(r\one)$ 
is indecomposable in $\Kom(\UcatD)$,
and the bicomplex $\rib\onen$ is isomorphic to a direct sum
of $\omega(\rib\one)$ and a contractible complex.
For $n=0$ the bicomplexes $\rib\onen$ and $\omega(r\one)$ are isomorphic.}
\end{conjecture}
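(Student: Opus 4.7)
The plan is to handle naturality by reducing to generating 2-morphisms, decomposability by explicitly identifying a contractible direct summand of $\omega(r\one)$, and indecomposability by a computation of the endomorphism algebra of $r\onen$ in $\Kom(\UcatD)$.

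For naturality, both sides of the commuting square are bilinear in $f\colon X\to Y$, so it suffices to verify $\kappa_Y\circ(f\otimes\Id)=(\Id\otimes f)\circ\kappa_X$ when $X,Y$ are among the generating 1-morphisms $\E\onen,\F\onen$ and $f$ runs through the generating 2-morphisms of $\UcatD$ (dots, caps, cups, crossings, plus the divided-power idempotents). The map $\kappa_X$ was constructed in the proof of Theorem~\ref{main} from explicit local data---essentially mates of the $\mathfrak{sl}_2$-commutation isomorphism applied to each summand $C_{k,l}$---so each such naturality square becomes a finite diagrammatic identity in $\Ucat$ that falls out of the nilHecke and bubble relations. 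Extension to arbitrary $X$ is a routine induction on word length using the horizontal compatibility $\kappa_{X\otimes Y}=(\Id_X\otimes\kappa_Y)\circ(\kappa_X\otimes\Id_Y)$; the same recipe applies to $\eta_X$.

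For decomposability, observe that the two expressions in \eqref{e1} for ${\rm r}\mathbf{1}_n$ yield two genuinely different bicomplexes: $r\onen$ is built from $\F^{(k)}\E^{(k)}\onen$, while $\omega(r\one)$, after pulling $\omega$ through, is built from $\E^{(k)}\F^{(k)}\onen$. For $n\geq 0$ the categorified $\mathfrak{sl}_2$ isomorphism of \cite{KLMS} gives explicit splittings
\[
\E^{(k)}\F^{(k)}\onen\ \cong\ \bigoplus_{j=0}^{k}\F^{(k-j)}\E^{(k-j)}\onen\otimes V_{n,j,k},
\]
with $V_{n,j,k}$ graded free of rank $\qbin{n}{j}$ up to shift. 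Sorting the expansion of $\omega(r\one)$ by $j$, the $j=0$ component is exactly $r\onen$. I would then show that the differentials $d^V$, $d^H$ restrict to chain maps on the complement and that the resulting subcomplex breaks up as a disjoint union of Koszul-type rows categorifying the $q$-binomial identities $\sum_j(-1)^j q^{\binom{j}{2}}\qbin{n}{j}\cdot(\text{pre-factors})=0$; each such row is contractible. The case $n\leq 0$ is symmetric under $\omega$, and $n=0$ is then automatic.

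The main obstacle is indecomposability of $r\onen$ for $n\geq 0$. I plan to compute the bidegree-zero and $q$-degree-zero endomorphism ring $\End_{\Kom(\UcatD)}(r\onen)$, a ring of chain maps in both variables. Any such endomorphism is a matrix of graded morphisms in $\Hom(\F^{(k)}\E^{(k)}\onen\la\ast\ra,\F^{(k')}\E^{(k')}\onen\la\ast\ra)$, whose graded dimensions are accessible through the thick-bubble calculus of \cite{KLMS}. Because $r\onen$ is unbounded from below, this computation must first be done in each Schur quotient $\UcatD_N$ with $N\gg n$ and then reassembled through the inverse system defining $^l\Kom(\UcatD)$. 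The genuinely difficult step is showing that the chain-map conditions for $d^V$ and $d^H$ cut the space down so drastically that every endomorphism is a scalar multiple of the identity plus a nilpotent, making $\End_{\Kom(\UcatD)}(r\onen)$ local. I expect Theorem~\ref{main}'s centrality to be pivotal: any idempotent endomorphism must commute up to homotopy with every $X\in\Com(\U)$, and since $\Uq$ has no nontrivial central idempotents in a given weight space, the only surviving idempotents should be $0$ and $1$.
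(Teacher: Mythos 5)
The statement you are attempting is stated in the paper only as a conjecture; the authors give no proof, and in their subsection on naturality they carry out exactly the reduction you propose (to generators $\E\onen$, $\F\onen$, $\E^2\onen$, $\F\E\onen$ and to the generating 2-morphisms dot, crossing, cup, cap) and then explicitly leave the verification ``for future investigations.'' Your naturality argument therefore contains no content beyond what the paper already records as open: the assertion that each naturality square ``falls out of the nilHecke and bubble relations'' is precisely the hard part, since $\kappa_\E$ is a composite $\bar f\circ g$ of nontrivial retractions through the intermediate bicomplex $\Int$, and one must check, summand by summand in bidegree $(k,l)$, that conjugation by these retractions intertwines, up to an explicitly constructed homotopy, the image of each generating 2-morphism. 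Moreover the horizontal compatibility $\kappa_{XY}=(\Id_X\,\kappa_Y)\circ(\kappa_X\,\Id_Y)$ that you invoke for the induction on word length is itself part of the half-braiding data one is trying to establish, not something already available.

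The decomposability and indecomposability arguments have the same status. Your use of the categorified commutation isomorphism $\E^{(k)}\F^{(k)}\onen\cong\bigoplus_j\F^{(k-j)}\E^{(k-j)}\onen\otimes V_{n,j,k}$ is the right starting point, but everything rests on the unproved claim that $d^V$ and $d^H$ restrict to the complement of the $j=0$ piece and that the complement splits into contractible Koszul rows; this requires tracking the central elements $c_\lambda$ and the trivalent-vertex differentials through the splitting idempotents, which is a substantial diagrammatic computation you do not perform. For indecomposability, the final heuristic is actually flawed: a direct sum decomposition $r\onen\cong A\oplus(\text{contractible})$ is invisible in $K_0$, since the contractible summand has vanishing Euler characteristic, so the absence of nontrivial central idempotents in $\dot{\bf U}_q(\mathfrak{sl}_2)$ cannot rule out nontrivial idempotent endomorphisms of $r\onen$ in $\Kom(\UcatD)$. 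The locality of $\End_{\Kom(\UcatD)}(r\onen)$ would have to be established directly from the graded $\Hom$-space computations in the Schur quotients $\UcatD_N$ together with the chain-map constraints, and that computation is exactly what is missing. In short, your proposal is a plausible programme whose outline matches the authors' own stated expectations, but it does not constitute a proof of the conjecture.
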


Observe that we could use
$\sigma(r\one)$ instead of $\omega(r\one)$, since by Theorem \ref{main}
they are isomorphic.

Let us comment on this conjecture. 
We expect the isomorphisms $\kappa_X$ and $\eta_X$ to be natural,
 and hence, to be defined  for any $X\in \Com(\UcatD)$. This would imply
that the bicomplexes $\rib\onen$ and $r^{-1}\onen$  belong  to the
Drinfeld center of $\Com(\UcatD)$ viewed as an additive monoidal category.
Here we regard 1-morphisms in $\Com(\UcatD)$ 
as objects of the monoidal category.
 The monoidal structure is  given by composition of 1-morphisms
and horizontal composition of 2-morphisms.
The collection of chain maps $\kappa_X$ define then an 
invertible natural transformation
$\kappa: - \rib \Longrightarrow \rib -$
between endofunctors of $\Com(\UcatD)$ given by tensoring on the left and on 
the right with the complex $\rib\onen$ for an appropriate $n$.


The first column of our bicomplex $C_{00}\to C_{10} \to C_{20}\to\dots $
is an example of  so-called Rickard complex introduced by
Chuang and Rouquier in \cite{CR} and intensively
studied  by Cautis and Kamnitzer \cite{CK}, \cite{C}. The
Rickard-Rouquier complex ${\bf 1}_{-n}{\sf T}\onen$
categorifies 
the action of the Weyl group on the finite-dimensional representations
and satisfies the braid relation. It can be defined as follows:
\begin{align}
{\sf T}\onen: &\dots \to \F^{(n+s)}\E^{(s)}\la s\ra \onen \to
\F^{(n+s-1)}\E^{(s-1)}\la s-1\ra \onen \to \dots
\to \F^{(n)} \onen  & \quad{\text {for}}\quad n\geq 0\\
{\sf T}\onen: &\dots \to \E^{(-n+s)}\F^{(s)}\la s\ra \onen \to
\E^{(-n+s-1)}\F^{(s-1)}\la s-1\ra \onen \to \dots
\to \E^{(-n)} \onen  & \quad{\text {for}}\quad n\leq 0
\end{align}
where the differential are non-zero maps.

\begin{conjecture}[Cautis]
The total complex of the ribbon bicomplex ${\operatorname{Tot}}(r^{-1}\onen)$
is homotopy equivalent
to 
$${\sf T}^2\onen\la \frac{n^2}{2}+n\ra [-n/2,-n/2]\, .$$ 
\end{conjecture}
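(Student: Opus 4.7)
\medskip

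\noindent\textit{Proof proposal.} The plan is to match both sides first at the level of the Grothendieck group and then to lift this to a homotopy equivalence via an explicit convolution computation.

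The first step is a decategorified sanity check. In $\mathbf{U}_q(\sltwo)$, Lusztig's braid operator $T$ satisfies $T^2\mathbf{1}_n = q^{n^2/2+n}\,\rmr^{-1}\mathbf{1}_n$, a classical identity that can be verified directly from the explicit formulas for $T$ on the weight basis of each simple module and the formula for $\rmr^{-1}\mathbf{1}_n$ from the introduction. Since $\chi(\operatorname{Tot}(r^{-1}\onen)) = \rmr^{-1}\mathbf{1}_n$ by construction, the shifts $\la n^2/2 + n\ra[-n/2,-n/2]$ in the conjecture are precisely what is needed for the classes of both complexes in $K_0(\UcatD)$ to agree, so the statement is at least consistent.

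To lift this categorically, I would use the symmetry part of Theorem~\ref{main}, together with the known $\omega$-equivariance of Rickard complexes due to Cautis-Kamnitzer, to reduce to the case $n\ge 0$. I would then write ${\sf T}^2\onen$ as the convolution ${\sf T}(\mathbf{1}_{-n})\circ{\sf T}(\onen)$ and expand using the categorified $(\E,\F)$-commutation identities of Khovanov-Lauda-Mackaay-Stosic, obtaining a complex whose terms are direct sums of $\F^{(a)}\E^{(b)}\onen\la\cdot\ra$. In parallel, I would totalize $r^{-1}\onen$, expand each factor $\Lambda^l\bar W_k$ into its rank-one graded pieces, and apply the same KLMS decomposition to each $\F^{(k)}\E^{(k)}\onen$ that appears, thereby obtaining a complex built from the same family of basic summands. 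A graded multiplicity count would then verify that the two complexes have matching terms in each homological degree, and Gaussian elimination of acyclic summands should reduce each to a common minimal form.

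The main obstacle, I expect, lies in matching the differentials. The horizontal differentials of $r^{-1}\onen$ are built from the central elements of $\End(\F^{(k)}\E^{(k)}\onen)$ introduced earlier in the paper, while the differentials in ${\sf T}^2\onen$ are the standard Rickard-Chuang-Rouquier 2-morphisms; identifying the two after the KLMS decomposition requires a careful analysis of how the dots and bubbles defining these central elements distribute across the decomposition, and this combinatorics is where a direct proof is likely to be delicate. A more structural route that would bypass most of this calculation is to prove a rigidity statement: any bounded-above complex in $\Com(\UcatD)$ whose $K_0$-class equals $\rmr^{-1}\mathbf{1}_n$ and which is ``central'' in the sense of the first part of Theorem~\ref{main} is determined up to homotopy equivalence. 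Since both $\operatorname{Tot}(r^{-1}\onen)$ (by Theorem~\ref{main}) and ${\sf T}^2\onen\la n^2/2+n\ra[-n/2,-n/2]$ (via the affine braid group action of Chuang-Rouquier) enjoy these properties, such a rigidity result would immediately yield the conjecture without any explicit differential matching.
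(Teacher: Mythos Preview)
The statement you are trying to prove is labelled a \emph{conjecture} in the paper, and the paper does not prove it. All the paper offers is the decategorified check: it verifies that the Euler characteristic of $\operatorname{Tot}(r^{-1}\onen)$ equals $T^2 1_n$ up to the indicated shift, by a direct $q$-identity computation. Your ``first step'' is exactly that check, and it is fine; everything after it is an attempt to go beyond what the paper establishes.

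Neither of your two routes to the categorical statement closes. In the direct route you correctly identify the obstacle---matching the central-element differentials of $r^{-1}\onen$ with the Rickard differentials after KLMS decomposition---but you do not resolve it; ``Gaussian elimination of acyclic summands should reduce each to a common minimal form'' is a hope, not an argument, and in $\Com(\UcatD)$ two complexes with the same graded pieces need not be homotopy equivalent. Your structural route replaces the conjecture with a rigidity statement (any central bounded-above complex with class $\rmr^{-1}1_n$ is unique up to homotopy) that is at least as hard and is not known: indeed, the paper only conjectures that $r\onen$ lies in the Drinfeld center (naturality of $\kappa_X$ is Conjecture~2, not a theorem), so you cannot even feed $r^{-1}\onen$ into such a rigidity machine. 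In short, your proposal is a reasonable outline of why the conjecture is plausible, but it does not constitute a proof, and the paper does not claim one either.
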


Note that the decategorified version of this conjecture holds.
For $n\geq 0$,  the Euler characteristic 
\begin{align*}
T^2 1_n= T1_{-n} T1_n&=
\sum_{l,s\geq 0} (-q)^{l+s} E^{(n+s)} F^{(s)} F^{(n+l)} E^{(l)} 1_n\\
&=\sum^\infty_{k=0} (-1)^k
q^{kn+k}(q^{2};q^{2})_k F^{(k)} E^{(k)}1_n
\end{align*}
coincides with ${\rm r}^{-1}1_{n}$ after multiplying with $q^{n^2/2+n}$.
The case $n\leq 0$ can be obtained similarly, after replacing
$n$ by $-n$ and exchanging $E$'s and $F$'s. 

\subsection{Strategy of the proof of Theorem \ref{main}}
It is enough to check
 {\it centrality}  on the generators.
This is because
 any ``chain group'' of $X$ is a composition of $\E$'s and $\F$'s 
and the maps $\eta_\E$ and $\eta_\F$ are adjoint to $\kappa_\F$ and $\kappa_\E$,
respectively.


\begin{lemma}\label{mainlemma}
 There are maps
$$\kappa_\E: \E r\onen  \to r \E\onen $$
$$\bar \kappa_\F:  r \F {\bf 1}_{n+2}  \to  \F r {\bf 1}_{n+2}
$$
which are  homotopy equivalences in $\Kom(\UcatD)$.
\end{lemma}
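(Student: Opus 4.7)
The plan is to construct $\kappa_\E$ by Gaussian elimination, exploiting the standard KLMS-style decomposition of $\E\F^{(k)}\E^{(k)}\onen$ to identify a main summand matching the chain bimodule of $r\E\onen$, and then to obtain $\bar\kappa_\F$ by adjunction or by a symmetry. Concretely, at bidegree $(k,l)$ the bicomplex $\E r\onen$ carries $\E\F^{(k)}\E^{(k)}\onen\otimes\Lambda^l W_k$ with shift $\la -kn-k-\tfrac{n^2}{2}-n\ra[n/2,n/2]$, while $r\E\onen$ carries $\F^{(k)}\E^{(k)}\E\onen\otimes\Lambda^l W_k$ with the corresponding shift computed at weight $n+2$. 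Using the commutation isomorphism
\[
\E\F^{(k)}\E^{(k)}\onen \;\cong\; \F^{(k)}\E\E^{(k)}\onen \;\oplus\; \F^{(k-1)}\E^{(k)}\onen\otimes V^n_k,
\]
where $V^n_k$ is a finite graded vector space whose graded dimension is the relevant quantum integer, together with the splitting of $\F^{(k)}\E\E^{(k)}\onen$ as $\F^{(k)}\E^{(k+1)}\onen$ with a graded multiplicity space of dimension $[k+1]$, the first summand is canonically identified, after bookkeeping of the $q$- and homological shifts, with the chain bimodule of $r\E\onen$ at $(k,l)$. I would define $\kappa_\E$ on this main summand as the canonical identity and extend by zero on the error summand $\F^{(k-1)}\E^{(k)}\onen\otimes V^n_k\otimes\Lambda^l W_k$.

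The key step is then to verify that these error summands, for varying $(k,l)$, assemble into a contractible subbicomplex of $\E r\onen$. The horizontal differentials $d^H_{k,l}$ of $r\onen$ are given by dot-generated central elements in $\End(\F^{(k)}\E^{(k)}\onen)$, and by direct diagrammatic computation one shows that, after composition with $\E$ on the left and restriction to error summands, their components agree up to sign with those of the vertical Rickard-type differentials on the $(k{-}1,l)$-summands, composed with an invertible bubble-generated endomorphism of $\F^{(k-1)}\E^{(k)}\onen$. Iterating Gaussian elimination of these cancelling pairs across $k\geq 1$ removes every error summand and leaves a bicomplex isomorphic to $r\E\onen$; the resulting composite coincides with $\kappa_\E$, which is therefore a homotopy equivalence.

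For $\bar\kappa_\F$, I would take the mate of $\kappa_\E$ under the biadjunction between $\E$ and $\F$ in $\UcatD$ recorded in \cite{Lau1}, transporting $\kappa_\E\colon \E r{\bf 1}_n\to r\E{\bf 1}_n$ through the units and counits to produce $\bar\kappa_\F\colon r\F{\bf 1}_{n+2}\to\F r{\bf 1}_{n+2}$. Alternatively, applying the symmetry 2-functor $\sigma\omega\in\mathcal G_1$ of Section \ref{sym}, which exchanges $\E$ and $\F$ while fixing $r\onen$ by the Symmetry part of Theorem \ref{main}, converts $\kappa_\E$ directly into $\bar\kappa_\F$. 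Either route preserves homotopy equivalences. The main obstacle throughout is the explicit diagrammatic verification that the components of the bicomplex differentials on the error summands are precisely the invertible maps needed for Gaussian elimination; this demands careful use of the nilHecke relations, bubble-slide identities, and bookkeeping of the many $q$- and homological shifts introduced by the KLMS decomposition, but no new structural ingredients beyond those already present in $\UcatD$.
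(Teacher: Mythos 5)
Your proposal correctly identifies the right objects to compare, and the observation that $\E\F^{(k)}\E^{(k)}\onen$ and $\F^{(k)}\E^{(k)}\E\onen$ both contain $\F^{(k)}\E^{(k+1)}\onen$ with the quantum multiplicity $[k{+}1]$ is the right starting point. However, what you have written is a plan rather than a proof, and there are two concrete gaps that prevent it from going through as stated.

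First, the map you propose — identity on the ``main summand'' and zero on the error summand — is not a chain map. The differentials $\E d^V_{k,l}$ and $\E d^H_{k,l}$ of $\E r\onen$ do not preserve the direct sum decomposition $\E\F^{(k)}\E^{(k)}\onen\cong\F^{(k)}\E\E^{(k)}\onen\oplus\F^{(k-1)}\E^{(k)}\onen\otimes V^n_k$: they have off-diagonal components between main and error pieces, and the diagonal component on the main summand does not coincide with the differential of $r\E\onen$ under your proposed identification. This is exactly why Gaussian elimination, if it works, produces a twisted differential on the surviving summands rather than the original one, and verifying that the twisted differential matches the differential of $r\E\onen$ is precisely the content you have omitted. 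The statement ``the resulting composite coincides with $\kappa_\E$'' is an assertion, not a derivation, and nothing in the proposal bounds or even displays the correction terms.

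Second, and more structurally, the KLMS-type commutation isomorphism you invoke only holds in the direction you wrote for one sign of the relevant weight; for the opposite sign the error summand appears on the other side of the isomorphism, so the decomposition of $\E\F^{(k)}\E^{(k)}\onen$ into ``main plus error'' reverses, and the Gaussian-elimination scheme as designed has nothing to eliminate. The Lemma is stated for arbitrary $n\in\Z$. The paper avoids this issue entirely by introducing the intermediate bicomplex $\Int$, whose chain groups $\F^{(k)}\E^{(k+1)}\onen\la -kn-2k\ra\otimes\Lambda^l W'_k$ are strictly smaller than either $\E r\onen$ or $r\E\onen$ (there is no $[k{+}1]$ multiplicity factor, and $W'_k$ replaces $W_k$), and by building, in Sections 5--8, explicit chain maps $f,\bar f$ and $g,p$ together with explicit horizontal and vertical homotopies realizing $\Int$ as a strong deformation retract of both $r\E\onen$ (Theorem~\ref{re}) and $\E r\onen$ (Theorem~\ref{Er}). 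That explicit construction — in particular the matrices $\beta_{k,l}$, $\bar\beta_{k,l}$, the maps $\gamma_{k,l}$, $b_{k,l}$, and the homotopies $h^H_{k,l}$, $h^V_{k,l}$ whose compatibility is checked against nilHecke, bubble, and thick-calculus identities — is the substance of the Lemma, and the proposal does not supply a substitute. Your suggestion to obtain $\bar\kappa_\F$ by applying $\sigma\omega$ (granted the Symmetry part of Theorem~\ref{main}) is the same as the paper's; the mate construction via biadjunction is a reasonable alternative, but the bottleneck is $\kappa_\E$.
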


Here  $\rib\E\onen$ is the bicomplex obtained by composing 
 $\rib\onenn{n+2}$ to the left of  $\E\onen$. 
The differentials are those
of $\rib\onenn{n+2}$ extended by identity on $\E\onen$.
The other bicomplexes are defined analogously.

To construct $\kappa_\E$ we will proceed as follows.
We will define an intermediate bicomplex $\Int$
as an indecomposable 
summand of $\rib\E\onen$, whose
 ``chain groups'' are
$$C'_{k,l}=\F^{(k)}\E^{(k+1)}  \onen \la -kn-2k\ra\otimes \Lambda^l W'_k\, $$
with $W'_k:=\Span_\Z\{w_2, \dots, w_{k+1}\}$,
 the total $q$-degree shift $\la -\frac{n^2}{2}-3n-4\ra$ and the 
homological shift $[n/2+1,n/2+1]$. 
 Then we  show that
  $\E r\onen$  and $r\E\onen$ retract to $\Int$ (Theorems \ref{Er}, \ref{re}).
 Composing the
corresponding homotopy
equivalences we will get $\kappa_\E$.
The construction of the chain maps between   $\E r\onen$, $r\E\onen$
and $\Int$, and the proofs of Theorems \ref{Er} and \ref{re}
 is  the most involved  technical part of the paper.

To define
$\kappa_\F$ we use the invariance of $r\onen$ under $\sigma\omega$.
Indeed, we have
$$\sigma \omega (\kappa_\E): \sigma \omega (
\E r\onen) \to \sigma\omega (r\E\onen)\, .$$
However, $ \sigma \omega (
\E r\onen)= \sigma \omega ( r) \F {\bf 1}_{n+2} \simeq r\F {\bf 1}_{n+2}$
and similarly,
$\sigma\omega (r\E\onen)$ is isomorphic to $\F r{\bf 1}_{n+2}$.

The proof of {\it invertibility} is based on the next theorem
computing the action of the ribbon complex on the
category of complexes over 
 $\Gr$ defined in \cite{Lau1} and the main result of \cite{BL}.
Let us define the endofunctors $r^L_N$ and $r^R_N$ 
 of $\Com(\Gr)$ by tensoring with
$\Gamma_N(r\onen)$ on the left and right, respectively,
i.e. $r^L_N(X)= \Gamma_N(r\onen)X$. Analogously, the endofunctors
$(r^{-1})^L_N$ and $(r^{-1})^R_N$ are defined by tensoring with
$\Gamma_N(r^{-1}\onen)$.

\begin{theorem}\label{flag}
For any natural number $N$,
$r^L_N$ and $r^R_N$
are the identity endofunctors of $\Com(\Gr)$
up to degree shift. Their inverses are $(r^{-1})^L_N$ and $(r^{-1})^R_N$,
respectively.
  \end{theorem}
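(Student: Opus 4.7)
The plan is to reduce the computation to a direct inspection at the highest weight $\onenn{N}$ of the flag category $\Gr=\Grn{N}$, where the bicomplex collapses. Recall that the 2-functor $\Gamma_N\colon\UcatD\to\Gr$ encoding the simple module $V_N$ is supported on the weights $n\in\{-N,-N+2,\ldots,N\}$, so $\Gamma_N(\onenn{m})=0$ for $|m|>N$. In particular, $\Gamma_N(\E^{(k)}\onenn{N})=0$ for every $k\geq 1$, since this $1$-morphism factors through $\onenn{N+2k}$. Thus every term
\begin{gather*}
C_{k,l}=\F^{(k)}\E^{(k)}\onenn{N}\langle -kN-k\rangle\otimes\Lambda^l W_k
\end{gather*}
with $k\geq 1$ vanishes in $\Gr$, leaving only $C_{0,0}=\onenn{N}$. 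After the overall shift $\langle -\tfrac{N^2}{2}-N\rangle[N/2,N/2]$ this yields
\begin{gather*}
\Gamma_N(\rib\onenn{N})\;\simeq\;\onenn{N}\langle -\tfrac{N^2}{2}-N\rangle[N/2,N/2]
\end{gather*}
in $\Com(\Gr)$, matching the scalar $q^{-N^2/2-N}$ by which the classical ribbon element acts on the highest weight vector of $V_N$.

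The second step is to propagate this identity to all of $\Com(\Gr)$ via the centrality isomorphisms $\kappa_X$ of Theorem \ref{main}. By the main result of \cite{BL}, $\Gr$ is the idempotent completion of the image of $\Gamma_N$, so every object of $\Gr$ at weight $n=N-2j$ is a summand of a composition $\F^{(j)}\onenn{N}\cdot\varphi$ for some endomorphism $\varphi$ of $\onenn{N}$. Iterating the homotopy equivalence $\bar\kappa_\F$ from Lemma \ref{mainlemma} yields
\begin{gather*}
\rib\onenn{n}\cdot\F^{(j)}\onenn{N}\;\simeq\;\F^{(j)}\cdot\rib\onenn{N}\;\simeq\;\F^{(j)}\onenn{N}\langle -\tfrac{N^2}{2}-N\rangle[N/2,N/2],
\end{gather*}
which exhibits $r^L_N$ as the identity endofunctor shifted by $\langle -\tfrac{N^2}{2}-N\rangle[N/2,N/2]$. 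The statement for $r^R_N$ is then immediate: for any $1$-morphism $X\colon m\to n$ in $\Gr$, centrality gives $X\cdot\Gamma_N(\rib\onenn{m})\simeq\Gamma_N(\rib\onenn{n})\cdot X$, so $r^L_N$ and $r^R_N$ are naturally isomorphic as endofunctors and must act by the same shift.

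Invertibility follows directly from the invertibility part of Theorem \ref{main}: the homotopy equivalences $\rib\,r^{-1}\onen\simeq\onen\simeq r^{-1}\,\rib\onen$ in ${}^l\Kom(\UcatD)$ descend via $\Gamma_N$ to $\Com(\Gr)$, since the Schur quotient $\UcatD_N$ sits in the inverse system defining ${}^l\Kom$. This identifies $(r^{-1})^L_N$ and $(r^{-1})^R_N$ as the inverses of $r^L_N$ and $r^R_N$, respectively. The principal technical obstacle lies in the second step: the $n$-dependent shift $\langle -n^2/2-n\rangle[n/2,n/2]$ baked into $\rib\onenn{n}$ must combine with the non-collapsing terms of $\rib\onenn{N-2j}$ to produce precisely the single global shift $\langle -\tfrac{N^2}{2}-N\rangle[N/2,N/2]$ predicted at the highest weight. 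Verifying this uniformity across the full weight diagram amounts to a careful bookkeeping of the shifts that accumulate as $\bar\kappa_\F$ is iterated.
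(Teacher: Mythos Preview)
Your first two steps match the paper's proof almost exactly: collapse the bicomplex at the highest weight $n=N$ using $\Gamma_N(\E^{(k)}\onenn{N})=0$, then transport the result to every other weight via the centrality isomorphism (the paper works with $\F^{k}$ rather than $\F^{(j)}$ and passes to $\F^{(k)}$ through the splitting $\F^{k}\simeq\bigoplus_{[k]!}\F^{(k)}$, but this is cosmetic). Your closing worry about ``bookkeeping of the shifts that accumulate as $\bar\kappa_\F$ is iterated'' is misplaced: the centrality isomorphisms of Theorem~\ref{main} are stated as homotopy equivalences in $\Com(\UcatD)$ with all degree and homological shifts already in place, so once centrality is granted there is nothing further to check.

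There is, however, a genuine circularity in your third step. You deduce that $(r^{-1})^{L}_N$ and $(r^{-1})^{R}_N$ are the inverses by invoking the invertibility clause of Theorem~\ref{main}. But in the paper's logical order, Theorem~\ref{flag} is the \emph{input} to the proof of that invertibility clause (see the strategy in Section~1.1 and the proof in Section~12), so you may not cite it here. The paper avoids the circularity by simply rerunning the same highest-weight-plus-centrality argument for $r^{-1}$: at $n=N$ the inverse bicomplex collapses to $\onenn{N}\langle N^2/2+N\rangle[-N/2,-N/2]$, and centrality of $r^{-1}$ (which is part of the already-proven centrality clause) propagates this to all weights. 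That exhibits $(r^{-1})^{L}_N$ and $(r^{-1})^{R}_N$ directly as the inverse shifts, with no forward reference.
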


The main result of \cite{BL} says that $\UcatD$
is the inverse limit of {\bf Flag} 2-categories. 
Hence, we conclude that tensoring with
 $r^{\pm 1}r^{\mp 1}\onen$ is the inverse limit of the identity
functor in $\Gr$, which is the identity endofunctor of $^l\Com(\UcatD)$.

To prove {\it symmetry}, we construct a bicomplex
$\tilde r\onen$ which is 1) isomorphic to $r\onen$ in $\Kom(\UcatD)$,
and 2) invariant under $\sigma\omega$. This is done in Section \ref{old}.
Then, given the isomorphism $H: r\onen \to \tilde r\onen$,
the composition $\sigma \omega (H^{-1}) 
\circ H: r\onen \to \sigma\omega(r\onen)$ is the required isomorphism.

The paper is organized as follows. After some preliminaries, 
 we define the central elements $c_\lambda$ indexed by partitions,
$r\onen$, $r^{-1}\onen$ and the
intermediate bicomplex $\Int$.
The next four Sections are devoted 
to the definition of the chain maps and homotopies and 
to the proofs of Theorems \ref{Er}, \ref{re} and Theorem \ref{main} 
(Centrality). 
After that
we recall the definitions of the symmetry 2-functors and
define the images of $r\onen$ under those symmetries.
Section \ref{old} is devoted to the construction of $\tilde r\onen$.
In the last section we prove Theorem \ref{flag} and Theorem \ref{main}
(Invertibility).

In Appendix we collect  identities  needed for the proofs.
 
\subsection*{Acknowledgments}
The first author would like to thank Aaron Lauda for  helpful
discussions and Krzysztof Putyra for sharing
 his LaTeX package for drawing diagrams.

\section{General facts}

\subsection{Definitions and conventions}
We refer to \cite{Lau1} and \cite{KLMS} for
the definitions of the 2-categories $\U$ and its Karoubi envelope
$\UcatD$. The 2-morphisms in these 2-categories are given by
diagrams modulo some local relations.
The right most region in all our diagrams is labeled by $n$. 
For any
1-morphism $x\in \Hom_{\UcatD}(n,m)$, we denote by
$\Dot(x) \subset \End_{\UcatD}(x)$ the subspace of its 2-endomorphisms 
generated by dots. A thick line labeled with a 
positive integer $k$   denotes
the identity 2-morphism of $\E^{(k)}:=(\E^k, {\mathcal i}_k)\la 
-\frac{k(k-1)}{2}\ra$
if it is oriented upwards; and
the identity 2-morphism of
$\F^{(k)}:=(\F^k, {\mathcal i}'_k)\la \frac{k(k-1)}{2}\ra$ in $\UcatD$ otherwise, where
${\mathcal i}_k$ is the idempotent defined in \cite{KLMS} and ${\mathcal i}'_k$ is its image
under  $180$-degree rotation.
The case $k=1$ will be represented by a thin line without any label
for a better visibility.

In this paper, for any 2-category $\C$,
we denote by  $\Kom(\C)$  the 2-category of  bicomplexes
over the 2-category $\C$. The objects of $\Kom(\C)$ coincide with
  objects of $\C$, 1-morphisms
are bicomplexes of 1-morphisms in $\C$, and 2-morphisms 
are chain maps, constructed from 2-morphisms in $\C$.
Let $\Com(\C)$ be the 2-category with the same objects and 1-morphisms
as $\Kom(\C)$ but whose 2-morphisms are chain maps up to homotopy.
We will denote by $\Kom^b(\C)$ and $\Com^b(\C)$ corresponding bounded versions.


The degree of a 2-morphism in $\U$ is defined as degree of the target 
minus degree of the source plus degree of the diagram.

\subsection{Strong deformation retraction}
Let us recall the definitions.

A chain complex $(C', d')$ is a  strong deformation retract of
a chain complex $(C,d)$ if there exist
\begin{itemize}
\item
a chain map $f: C\to C'$, i.e. $d'f=fd$;
\item
a chain map $g: C' \to C$, i.e. $d g = g d'$;
\item
a homotopy
$h: C_\bullet \to C_{\bullet -1}$ satisfying
$ hd+dh = 1 -gf$
and
\[\begin{array}{cc}
fg=1 & h^2=0\\
fh=0 &  hg=0
\end{array}\]
\end{itemize}

{\it Remark.} From the four equalities including $g$ it is enough
to show that $fg=1$ and $hd+dh=1-gf$. The other two equalities($dg=gd'$, $hg=0$)
follow from them.

\vskip2mm

Let $(C, d^V, d^H)$ and $(C', d'^V, d'^H)$ be two bicomplexes.
We say that the second bicomplex is 
a strong deformation retract of the first one if there exist
\begin{itemize}
\item
a chain map $f: C\to C'$ with $d'^Hf=fd^H$, $d'^Vf=fd^V$;
\item
a chain map $g: C' \to C$ with $d^H g = g d'^H$, $d^V g = g d'^V$;
\item
a homotopy
$h=h^H+h^V: C_\bullet \to C_{\bullet -1}$ with
$ h^Hd^H+ h^Vd^V+d^Hh^H+ d^Vh^V = 1 -gf$, $h^H d^V+d^Vh^H=0$ and $h^Vd^H+d^H h^V=0$
satisfying
\[\begin{array}{cccc}
fg=1 & \quad h^H h^H=0 & \quad h^Hh^V+h^Vh^H=0 &\quad h^Vh^V=0\\
fh^H=0 & \quad fh^V=0 & \quad h^Hg=0 & \quad h^Vg=0
\end{array}\]
\end{itemize}

{\it Remark.}  The  equalities $d^Hg=gd'^H$, $d^Vg=g d'^V$, 
$h^Hg=0$ and $h^Vg=0$
follow from the others.

\subsection{Symmetric functions}
Let us denote by $S_k$ the symmetric group and $A_k=\Z[x_1, \dots,x_k]^{S_k}$ 
the ring of symmetric polynomials. 
Let $A$ be the ring of symmetric functions, defined as the inverse
limit of the system $(A_k)_{k\in \N}$.

For a partition $\lambda=(\lambda_1,\lambda_2,\dots,\lambda_a)$ 
with $\lambda_1\geq \lambda_2\geq\dots\geq\lambda_a\geq 0$ let
$|\lambda| := \sum_{i=1}^a\lambda_i$. 
We denote by
 $P(a)$  the set of all partitions $\lambda$ with at most $a$ parts (i.e.\ with $\lambda_{a+1}=0$).
 Moreover,
the set of all partitions 
(i.e.\ the set $P(\infty)$) we denote simply by $P$.

The dual (conjugate) partition of  $\lambda$ is the partition
$\lambda^t=(\lambda^t_1,\lambda^t_2,\ldots)$ with
$\lambda^t_j=\sharp\{i|\lambda_i\ge j\}$ which is given by reflecting
the Young diagram of $\lambda$ along the diagonal.

The Schur polynomials $\{s_\lambda\;|\; \lambda \in P(k)\}$ form a basis
of $A_k$, as well as Schur functions
$\{s_\lambda\; |\; \lambda \in P\}$ is a base of $A$. 
The multiplication
in this basis is given by the following formula
$$s_\mu s_\nu =
\sum_{\lambda \in P} N^\lambda _{\mu \nu }s_\lambda ,$$
where $N^\lambda_{\mu\nu}$ are the Littlewood-Richardson coefficients.
The elementary symmetric functions
$\{s_{1^d}=e_d\;|\; d\in \N\}$ or the complete symmetric functions
$\{s_d=h_d\;|\; d\in \N\}$ generate $A$ multiplicatively.

The ring 
$A$ has a natural Hopf algebra structure  with comultiplication 
\begin{gather*}
  \Delta \zzzcolon A \longrightarrow A \otimes A 
\end{gather*}
 given by
\begin{gather*}
  \Delta (s_\lambda )=\sum_{\mu,\nu \in P} N^\lambda _{\mu \nu }s_\mu \otimes s_\nu ,
\end{gather*}
counit
\begin{gather*}
  \epsilon \zzzcolon A \longrightarrow \modZ ,\quad s_\lambda \mapsto \delta _{\lambda ,0},
\end{gather*}
and antipode
\begin{gather*}
  \gamma \zzzcolon A \longrightarrow A ,\quad s_\lambda \mapsto (-1)^{|\lambda |}s_{\lambda ^t}.
\end{gather*}

\section{The center of the 2-category $\UcatD$}\label{center}
After recalling the general definition of  a center for any linear category,
we construct central elements in $\UcatD(n,m)$.

\subsection{Center of a category}
For a linear category $C$, the {\em center} $Z(C)$ of $C$ is the ring
of endo-natural transformations on the identity functor $1_C\zzzcolon C\longrightarrow C$.
Thus, an element $\sigma $ of $Z(C)$ is a collection of endomorphisms
$\sigma _x\zzzcolon x\longrightarrow x$ for objects $x$ in $C$ such that we have
\begin{gather*}
  f\sigma _x=\sigma _yf
\end{gather*}
for any morphisms $f\zzzcolon x\longrightarrow y$ in $C$.  
Multiplication of two elements
$\sigma $ and $\tau $ in $Z(C)$  is
defined by
\begin{gather*}
  (\tau \sigma )_x:=\tau _x\sigma _x.
\end{gather*}
It is easily seen that $Z(C)$ is commutative.
We call $\sigma $ a {\em central element} of $C$.

Let $\modC $ be a linear $2$-category. For each pair $(x,y)$ of objects in
$\modC $, one can consider the center  $Z(\modC (x,y))$ of the category
$\modC (x,y)$ whose objects are
the  $1$-morphisms between $x$ and $y$ and morphisms are $2$-morphisms.
The center  $Z(\modC (x,y))$ is a commutative ring.

\subsection{Central elements in $\UcatD$}
\label{sub:center}


We have natural ring homomorphisms
\begin{gather*}
  d_a\zzzcolon A _a \longrightarrow  \Dot(\E^{(a)}\onen),\\
   d'_a\zzzcolon A _a \longrightarrow  \Dot(\F^{(a)}\onen).
\end{gather*}
which lift to
\begin{gather*}
  d_a\zzzcolon A  \longrightarrow  \Dot(\E^{(a)}\onen),\\
  d'_a\zzzcolon A  \longrightarrow  \Dot(\F^{(a)}\onen).
\end{gather*}

In this subsection, we define for each $1$-morphism 
$f\zzzcolon n\longrightarrow m$ in $\UcatD$ a
ring homomorphism
\begin{gather*}
  c=c_f\zzzcolon A \longrightarrow \Dot(f).
\end{gather*}

For $f=\E^{(a)}\onen$, we set
\begin{gather*}
  c_{\E^{(a)}\onen}(x)=d_a(x).
\end{gather*}
For $f=\F^{(a)}\onen$, we set
\begin{gather*}
  c_{\F^{(a)}\onen}(x)=d'_a(\gamma(x)).
\end{gather*}
For $f=f_1f_2\dots f_p$, where each $f_j$ is $\E^{(a_j)}{\mathbf 1}_{n_j}$ or
$\F^{(a_j)}{\mathbf 1}_{n_j}$, $a_j\ge 0$, $n_j\in \modZ $, we set
\begin{gather*}
  \begin{split}
    c_{f_1f_2\dots f_p}(x)&=\text{horizontal composition}\left((c_{f_1}\otimes \dots \otimes c_{f_p})\Delta ^{[p]}(x)\right)\\
    &=\sum c_{f_1}(x_{(1)})\circ\dots \circ c_{f_p}(x_{(p)})
  \end{split}
\end{gather*}
where $\Delta ^{[p]}\zzzcolon A \longrightarrow A ^{\otimes p}$ is the $p$-output comultiplication with
\begin{gather*}
  \Delta ^{[p]}(x)=\sum x_{(1)}\otimes \dots \otimes x_{(p)}.
\end{gather*}
Finally, for a direct sum $f=f_1\oplus\dots \oplus f_p$, we set
\begin{gather*}
  c_{f_1\oplus\dots \oplus f_p}(x)=\text{diag}(c_{f_1}(x),\ldots ,c_{f_p}(x)).
\end{gather*}

We also adopt the notation $(c_\lambda)_f$ for $c_f(s_\lambda)$ and draw:
\begin{equation*}
(c_\lambda)_f =\xy
(-9,6);(-9,-6); **[black][|(2)]\dir{-} ?(1)*[|(1)]\dir{>};
(-5,-6);(-5,6); **[black][|(2)]\dir{-} ?(1)*[|(1)]\dir{>};
(5,6);(5,-6); **[black][|(2)]\dir{-} ?(1)*[|(1)]\dir{>};
(-11,-6)*{\scriptstyle a_1};
(-3,6)*{\scriptstyle a_2};
(7,-6)*{\scriptstyle a_p};
(0,-4)*{\dots}; (0,4)*{\dots};
(-2,0)*{\bigb{\quad\quad c_\lambda \quad\quad}};
(-12,0)*{};(8,0)*{};
\endxy
\end{equation*}
where $f=f_1f_2\dots f_p$, each $f_j$ is $\E^{(a_j)}{\mathbf 1}_{n_j}$ or
$\F^{(a_j)}{\mathbf 1}_{n_j}$. 
\vskip2mm

Note that, for $f=\E^{a}\onen$ and $f=\F^a\onen$  we have
\begin{equation*}
\xy
{\ar (-5,-6)*{}; (-5,6)*{}};
{\ar (5,-6)*{}; (5,6)*{}};
(0,-4)*{\dots}; (0,4)*{\dots};
(0,0)*{\bigb{\quad\,\,c_\lambda \quad\,\,}};
(-8,0)*{};(8,0)*{};
\endxy \;\;
:=\quad
\xy
{\ar (-5,-6)*{}; (-5,6)*{}};
{\ar (5,-6)*{}; (5,6)*{}};
(0,-4)*{\dots}; (0,4)*{\dots};
(0,0)*{\bigb{\;\Delta^{[a]}(s_\lambda)\; }};
(-8,0)*{};(8,0)*{};
\endxy,
\quad\quad\text{and}\quad\quad
\xy
{\ar (-5,6)*{}; (-5,-6)*{}};
{\ar (5,6)*{}; (5,-6)*{}};
(0,-4)*{\dots}; (0,4)*{\dots};
(0,0)*{\bigb{\quad c_\lambda \quad\,}};
(-8,0)*{};(8,0)*{};
\endxy\;\;
:=(-1)^{|\lambda|} \quad\xy
{\ar (-5,6)*{}; (-5,-6)*{}};
{\ar (5,6)*{}; (5,-6)*{}};
(0,-4)*{\dots}; (0,4)*{\dots};
(0,0)*{\bigb{\;\; \Delta^{[a]}(s_{\lambda^t}) \;\;}};
(-8,0)*{};(8,0)*{};
\endxy \;\; .
\end{equation*}
\vskip2mm
For $f=\E^{(a)}\onen$ and $f=\F^{(a)}\onen$  we have
\begin{equation*}
\xy
 (0,7);(0,-7); **[black][|(2)]\dir{-} ?(1)*[|(1)]\dir{>};
 (2,-7)*{\scriptstyle a};
 (4,0)*{};
 (-4,0)*{};
(0,0)*{\bigb{c_\lambda}};
\endxy=\xy
 (0,7);(0,-7); **[black][|(2)]\dir{-} ?(1)*[|(1)]\dir{>};
 (2,-7)*{\scriptstyle a};
 (4,0)*{};
 (-4,0)*{};
(0,0)*{\bigb{\lambda}};
\endxy,\quad\xy
 (0,-7);(0,7); **[black][|(2)]\dir{-} ?(1)*[|(1)]\dir{>};
 (2,7)*{\scriptstyle a};
 (4,0)*{};
 (-4,0)*{};
(0,0)*{\bigb{c_\lambda}};
\endxy=(-1)^{|\lambda|}\xy
 (0,-7);(0,7); **[black][|(2)]\dir{-} ?(1)*[|(1)]\dir{>};
 (2,7)*{\scriptstyle a};
 (4,0)*{};
 (-4,0)*{};
(0,0)*{\bigb{\lambda^t}};
\endxy.
\end{equation*}

The following  proposition is the direct consequence of the definitions.

\begin{prop}
  \label{r2}
  For $1$-morphisms $f\zzzcolon n\longrightarrow l$ and $g\zzzcolon l\longrightarrow m$ in $\UcatD$, we have
  \begin{gather*}
    (c_\lambda )_{gf}=\sum_{\mu ,\nu \in P}N^\lambda _{\mu ,\nu }(c_\mu )_g\circ(c_\nu )_f
  \end{gather*}
and 
\begin{gather*}
    (c_\lambda c_\mu)_{gf} =\sum_{\nu \in P}N^\nu _{\lambda ,\mu }(c_\nu)_{gf}\,  .
  \end{gather*}
\end{prop}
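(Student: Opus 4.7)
The plan is to deduce both identities directly from the definition of $c_f$ together with the Hopf algebra structure on $A$ recalled at the end of Section~2.3. The key observations are (i) the comultiplication on Schur functions is $\Delta(s_\lambda)=\sum_{\mu,\nu}N^\lambda_{\mu,\nu}\,s_\mu\otimes s_\nu$, (ii) $\Delta$ is coassociative, and (iii) $A$ is a bialgebra, so $\Delta$ is a ring homomorphism.

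For the first identity, I would write $f=f_1f_2\dots f_p$ and $g=g_1g_2\dots g_q$ as compositions of atomic $1$-morphisms of the form $\E^{(a)}\onenn{n}$ or $\F^{(a)}\onenn{n}$, so that $gf=g_1\dots g_qf_1\dots f_p$. Unfolding the defining formula
\[
c_{gf}(s_\lambda)=\sum c_{g_1}(x_{(1)})\circ\dots\circ c_{g_q}(x_{(q)})\circ c_{f_1}(x_{(q+1)})\circ\dots\circ c_{f_p}(x_{(q+p)}),
\]
where $\Delta^{[q+p]}(s_\lambda)=\sum x_{(1)}\otimes\dots\otimes x_{(q+p)}$, coassociativity lets me factor $\Delta^{[q+p]}=(\Delta^{[q]}\otimes\Delta^{[p]})\circ\Delta^{[2]}$. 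Applying $\Delta^{[2]}(s_\lambda)=\sum_{\mu,\nu}N^\lambda_{\mu,\nu}\,s_\mu\otimes s_\nu$ first and then regrouping the tail factors, the first $q$ terms reassemble to $c_g(s_\mu)$ and the last $p$ terms to $c_f(s_\nu)$, yielding $\sum_{\mu,\nu}N^\lambda_{\mu,\nu}(c_\mu)_g\circ(c_\nu)_f$. Implicit in this step is that the construction of $c_f$ is independent of the chosen factorisation into atomic pieces; this in turn is a consequence of coassociativity applied to an iterated splitting.

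For the second identity I would use that $c_{gf}\maps A\to \Dot(gf)$ is a ring homomorphism. Indeed, on atomic factors this is clear since $d_a$ and $d'_a\circ\gamma$ are ring homomorphisms (and $\gamma$ is an algebra antihomomorphism, which is harmless because $A$ is commutative). The definition of $c_{f_1\dots f_p}$ as the horizontal composition of $(c_{f_1}\otimes\dots\otimes c_{f_p})\circ\Delta^{[p]}$ is then a ring homomorphism because $\Delta$ is an algebra map and horizontal composition is bilinear and distributes over the product of dotted endomorphisms supported on disjoint strands. Applying this ring homomorphism to $s_\lambda s_\mu=\sum_\nu N^\nu_{\lambda,\mu}\,s_\nu$ gives the claim at once.

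The only real obstacle is the verification that $c_{gf}$ is a well-defined ring homomorphism, i.e. that the horizontal composition of ``dotted'' central elements on disjoint strands commutes, so that the product $c_{gf}(s_\lambda)\,c_{gf}(s_\mu)$ can be reorganised into $c_{gf}(s_\lambda s_\mu)$. This follows from the fact that dots on distinct strands commute in $\U$ (all $(c_\lambda)_{f_i}$ lie in the commutative ring $\Dot(f_i)$), together with the fact that dotted $2$-endomorphisms supported on one strand commute with any $2$-endomorphism supported on another strand. Once this bookkeeping is in place, both formulas fall out immediately from the coalgebra/algebra axioms on $A$.
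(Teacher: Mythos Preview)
Your proposal is correct and aligns with the paper's approach: the paper simply declares the proposition to be ``the direct consequence of the definitions'' without further argument, and what you have written is precisely the detailed unpacking of that claim via coassociativity of $\Delta$ for the first identity and the ring-homomorphism property of $c_{gf}$ (using that $\Delta$ is an algebra map and dots on disjoint strands commute) for the second.
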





In particular, for
 $\lambda \in P_d$, and $f=\E^{(i_1)}\F^{(j_1)}\dots \E^{(i_p)}\F^{(j_p)}\onen$
we have
\begin{gather*}
  (c_\lambda )_{\E^{(i_1)}\F^{(j_1)}\dots \E^{(i_p)}\F^{(j_p)}\onen}\\
  =\sum_{\lambda ^{(1)},\mu ^{(1)},\ldots ,\lambda ^{(p)},\mu ^{(p)}\in P}
  (-1)^{l_1+\dots +l_p}
  N^\lambda _{\lambda ^{(1)}\mu ^{(1)}\ldots \lambda ^{(p)}\mu ^{(p)}}
  \lambda ^{(1)}\circ(\mu ^{(1)})^t\circ\dots \circ\lambda ^{(p)}\circ(\mu ^{(p)})^t
\end{gather*}
where 
$N^\lambda _{\lambda ^{(1)}\mu ^{(1)}\ldots \lambda ^{(p)}\mu ^{(p)}}\in \modZ _{\ge 0}$ 
are
 the
Littlewood--Richardson coefficients.  
\vskip2mm

{\bf Examples.}
$$
1) \;\; (c_d)_{\F^{(b)}\E^{(a)}\onen}
=\sum_{k,l\ge 0 \atop k+l=d} d'_b(\gamma(h_l))\circ d_a(h_k)=
\sum_{k,l\ge 0 \atop k+l=d} (-1)^l
\xy
 (-8,6);(-8,-6); **[black][|(2)]\dir{-} ?(0)*[|(1)]\dir{<};
 (0,6);(0,-6); **[black][|(2)]\dir{-} ?(1)*[|(1)]\dir{>};
 (1.5,-6)*{\scriptstyle {a}};
 (-6.5,6)*{\scriptstyle {b}};
 (4,0)*{};
 (-12,0)*{};
(0,0)*{\bigb{h_k}};
(-8,0)*{\bigb{e_l}};
\endxy ,
$$

\begin{gather*}
2) \;\; (c_d)_{\E^{(i_1)}\F^{(j_1)}\dots \E^{(i_p)}\F^{(j_p)}\onen}
  =\sum_{k_1,l_1,\ldots ,k_p,l_p\ge 0 \atop k_1+l_1+\dots +k_p+l_p=d}
  d_{i_1}(h_{k_1})\circ d'_{j_1}(\gamma(h_{l_1}))\circ\dots \circ d_{i_p}(h_{k_p})\circ d'_{j_p}(\gamma(h_{l_p}))=\\
  =\sum_{k_1,l_1,\ldots ,k_p,l_p\ge 0 \atop k_1+l_1+\dots +k_p+l_p=d}
  (-1)^{l_1+\dots +l_p}
\xy
 (0,6);(0,-6); **[black][|(2)]\dir{-} ?(1)*[|(1)]\dir{>};
 (2,-6)*{\scriptstyle {i_1}};
 (4,0)*{};
 (-4,0)*{};
(0,0)*{\bigb{h_{k_1}}};
\endxy
\xy
 (0,-6);(0,6); **[black][|(2)]\dir{-} ?(1)*[|(1)]\dir{>};
 (2,6)*{\scriptstyle {j_1}};
 (4,0)*{};
 (-4,0)*{};
(0,0)*{\bigb{e_{l_1}}};
\endxy
\dots
\xy
 (0,6);(0,-6); **[black][|(2)]\dir{-} ?(1)*[|(1)]\dir{>};
 (2,-6)*{\scriptstyle {i_p}};
 (4,0)*{};
 (-4,0)*{};
(0,0)*{\bigb{h_{k_p}}};
\endxy
\xy
 (0,-6);(0,6); **[black][|(2)]\dir{-} ?(1)*[|(1)]\dir{>};
 (2,6)*{\scriptstyle {j_p}};
 (4,0)*{};
 (-4,0)*{};
(0,0)*{\bigb{e_{l_p}}};
\endxy .
\end{gather*}

\begin{prop}
  For $m,n\in \modZ $, $\lambda \in P$ we have $c_\lambda \in Z(\UcatD(n,m))$.
\end{prop}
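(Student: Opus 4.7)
The plan is to verify centrality on generators. By density, a collection $\sigma = \{\sigma_f\}$ belongs to $Z(\UcatD(n,m))$ once the naturality square $\alpha\circ(c_\lambda)_f = (c_\lambda)_g\circ\alpha$ is established for $\alpha$ ranging over a generating set of 2-morphisms. The generating 2-morphisms of $\U$ (and hence, via the defining idempotents, of its Karoubi envelope $\UcatD$) split into three families: (a) dots on a single strand, (b) crossings of two adjacent strands, (c) caps and cups of the biadjunction $\E\dashv\F$. Commutativity with the idempotents ${\mathcal i}_k,{\mathcal i}'_k$ used to define $\E^{(k)},\F^{(k)}$ follows automatically from Proposition \ref{r2} applied to $f = \E^k$ or $f = \F^k$, since those idempotents lie in $\End(\E^k\onen)$ and any element of $\Dot(\E^k\onen)$ commutes with them (this is a consequence of the KLMS thick-calculus relations).

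First I would dispose of case (a). For any 1-morphism $f=f_1\cdots f_p$, a dot on the $i$-th strand lies in $\Dot(f)$, which is a commutative subring of $\End(f)$. Since $(c_\lambda)_f\in \Dot(f)$ by construction, the two commute. For case (b), consider a crossing of $\E^{(a)}\E^{(b)}$ (the other orientations are analogous and the mixed-orientation cases reduce to (c) combined with (a)). Using Proposition \ref{r2},
\[
(c_\lambda)_{\E^{(a)}\E^{(b)}}=\sum_{\mu,\nu\in P}N^\lambda_{\mu\nu}\,(c_\mu)_{\E^{(a)}}\!\otimes (c_\nu)_{\E^{(b)}},\qquad
(c_\lambda)_{\E^{(b)}\E^{(a)}}=\sum_{\mu,\nu\in P}N^\lambda_{\nu\mu}\,(c_\nu)_{\E^{(b)}}\!\otimes (c_\mu)_{\E^{(a)}},
\]
and commutativity of Schur multiplication gives $N^\lambda_{\mu\nu}=N^\lambda_{\nu\mu}$. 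Thus I am reduced to showing that for each fixed pair $(\mu,\nu)$, the element $(c_\mu)_{\E^{(a)}}\!\otimes (c_\nu)_{\E^{(b)}}$ slides through the thick crossing to $(c_\nu)_{\E^{(b)}}\!\otimes (c_\mu)_{\E^{(a)}}$. This is the standard fact that symmetric polynomials in the dots are central in the (thick) nilHecke algebra, a direct consequence of $x_i\tau_i-\tau_i x_{i+1}=1$ and its thick-strand analogue.

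The substantive step is case (c). Here I must check that $(c_\lambda)_f$ commutes with the basic cap and cup 2-morphisms. Because these relate an upward $\E$-strand with a downward $\F$-strand, the point is exactly that the definitions
\[
c_{\E^{(a)}\onen}(x)=d_a(x),\qquad c_{\F^{(a)}\onen}(x)=d'_a(\gamma(x))
\]
are calibrated so that $c_\lambda$ slides around a bend: the antipode $\gamma(s_\lambda)=(-1)^{|\lambda|}s_{\lambda^t}$ encodes the effect of a $180^\circ$ rotation of a symmetric polynomial of dots on a thick strand. Concretely, I would verify the identity
\[
\text{(rotation of $d_a(s_\lambda)$ on an $\E^{(a)}$ strand through cap/cup)}=(-1)^{|\lambda|}\,d'_a(s_{\lambda^t})\text{ on the resulting $\F^{(a)}$ strand},
\]
which is the KLMS thick-bubble/infinite-Grassmannian computation. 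Once this rotation identity is in hand, combining it with case (a) (to move symmetric dots along straight segments) and case (b) (to push past intermediate crossings) yields compatibility with every cap and cup 2-morphism.

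The main obstacle is the rotation identity in case (c): all the bookkeeping with signs, transposed partitions and curls on thick strands must cancel against each other, and this is precisely the nontrivial content that fixes the definition of $c_{\F^{(a)}}$ in terms of $\gamma$. The other two cases are essentially formal consequences of Proposition \ref{r2} and commutativity of $A$.
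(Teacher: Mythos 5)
Your plan is sound and reaches the same conclusion, but it travels a genuinely different road than the paper. The paper's proof begins by \emph{splitting thick strands}: passing $(c_\lambda)_f$ through a thick splitter turns a Schur polynomial on $\E^{(a)}$ into its comultiplicate $\Delta^{[a]}(s_\lambda)$ on $a$ thin strands, so everything reduces to checking centrality against thin dots, thin crossings and thin turns in $\Ucat$. The paper then observes that the $e_d$'s generate $A$ as a ring, so since $c_f$ is a ring homomorphism into a commutative subring one only has to treat $\lambda=1^d$; this makes the crossing check a two-line nilHecke computation (only $d=1,2$ are nonzero on $\E^2$) and the turn check a direct verification that the resulting curl vanishes, matching $(c_{1^d})_{\onen}=0$. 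You instead stay in the thick calculus: you verify compatibility with the idempotents $\mathcal{i}_k,\mathcal{i}'_k$ up front, invoke the KLMS fact that symmetric polynomial dots are central in the thick nilHecke algebra for crossings, and invoke the KLMS rotation identity (transpose plus sign $(-1)^{|\lambda|}$, i.e.\ the antipode $\gamma$) for caps and cups. Both routes work; the paper's is more self-contained and elementary, while yours is conceptually cleaner in that it identifies the antipode in the definition of $c_{\F^{(a)}}$ as \emph{precisely} the rotation rule, which the paper leaves implicit.

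One imprecision worth flagging: you assert that ``any element of $\Dot(\E^k\onen)$ commutes with'' the idempotents $\mathcal{i}_k$. That is false --- for thin $\E^k$ the ring $\Dot(\E^k\onen)\cong\Z[x_1,\ldots,x_k]$ contains plenty of non-symmetric polynomials that do not commute with the nilHecke idempotent. What is true, and what you need, is that the specific elements $(c_\lambda)_{\E^k\onen}=\Delta^{[k]}(s_\lambda)$ are \emph{symmetric} in the dots, and symmetric polynomials are central in $\NH_k$, hence commute with $\mathcal{i}_k$. This is exactly the content of the paper's strand-splitting picture. With that repair the proposal goes through.
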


\begin{proof}
We need to prove that for every $2$-morphism $\alpha: f\rightarrow g$ in 
$\UcatD(n,m)$ we have
$$
\alpha(c_\lambda)_f=(c_\lambda)_g\alpha \, .
$$

Splitting the thick lines and moving the dots as follows 
\begin{equation*}
\xy
 (0,1);(-9,8)*{} **\crv{(-9,2)}?(1)*\dir{>};
 (0,1);(-6,8)*{} **\crv{(-6,2)}?(1)*\dir{>};
 (0,1);(6,8)*{} **\crv{(6,2)}?(1)*\dir{>};
 (0,1);(9,8)*{} **\crv{(9,2)}?(1)*\dir{>};
 (0,1);(0,-8) **[black][|(2)]\dir{-};
 (0,5)*{\dots};
 (0,-3)*{\bigb{c_\lambda}};
 (7,-4)*{n};
 (2,-8)*{\scriptstyle a};
 (12,0)*{};
 (-12,0)*{};
\endxy=\xy
 (0,-1);(-9,8)*{} **\crv{(-9,0)}?(1)*\dir{>};
 (0,-1);(-6,8)*{} **\crv{(-6,0)}?(1)*\dir{>};
 (0,-1);(6,8)*{} **\crv{(6,0)}?(1)*\dir{>};
 (0,-1);(9,8)*{} **\crv{(9,0)}?(1)*\dir{>};
 (0,-1);(0,-8) **[black][|(2)]\dir{-};
 (0,7)*{\dots};
 (0,3)*{\bigb{\quad\quad\,c_\lambda \quad\quad \,}};
 (7,-4)*{n};
 (2,-8)*{\scriptstyle a};
 (12,0)*{};
 (-12,0)*{};
\endxy,\quad
\xy
 (0,-1);(-9,-8)*{} **\crv{(-9,-2)}?(.8)*\dir{<};
 (0,-1);(-6,-8)*{} **\crv{(-6,-2)}?(.8)*\dir{<};
 (0,-1);(6,-8)*{} **\crv{(6,-2)}?(.8)*\dir{<};
 (0,-1);(9,-8)*{} **\crv{(9,-2)}?(.8)*\dir{<};
 (0,-1);(0,8) **[black][|(2)]\dir{-};
 (0,-5)*{\dots};
 (0,3)*{\bigb{c_\lambda}};
 (7,4)*{n};
 (2,8)*{\scriptstyle a};
 (12,0)*{};
 (-12,0)*{};
\endxy=\xy
 (0,1);(-9,-8)*{} **\crv{(-9,0)}?(.9)*\dir{<};
 (0,1);(-6,-8)*{} **\crv{(-6,0)}?(.9)*\dir{<};
 (0,1);(6,-8)*{} **\crv{(6,0)}?(.9)*\dir{<};
 (0,1);(9,-8)*{} **\crv{(9,0)}?(.9)*\dir{<};
 (0,1);(0,8) **[black][|(2)]\dir{-};
 (0,-7)*{\dots};
 (0,-3)*{\bigb{\quad\quad\, c_\lambda \quad\quad\,}};
 (7,4)*{n};
 (2,8)*{\scriptstyle a};
 (12,0)*{};
 (-12,0)*{};
\endxy,
\end{equation*}
we see that it is enough
 to prove the proposition for 2-morphisms of $\Ucat$, generated by
 dots, crossings and turns. For dots the proposition is clear.

Since elementary functions   generate  $A$, 
it is enough to consider  $c_\lambda$ for
 $\lambda=1^d$, $d\in \N$ in what follows.
Note that
$(c_{1^d})_{\E^2\onen}\neq 0$ only for $d=1$ or $d=2$ and
\begin{equation*}
\xy
(-3,-6)*{}; (-3,6)*{}**\dir{-} ?(1)*\dir{>};
(3,-6)*{}; (3,6)*{}**\dir{-} ?(1)*\dir{>};
{\ar (3,-6)*{}; (3,6)*{}};
(0,0)*{\bigb{\,\,\, c_{(1)} \,\,\,}};
(-7,0)*{};(7,0)*{};
\endxy=\xy
(-3,-6)*{}; (-3,6)*{}**\dir{-} ?(1)*\dir{>}?(.5)*{\bullet};
(3,-6)*{}; (3,6)*{}**\dir{-} ?(1)*\dir{>};
(-6,0)*{};(6,0)*{};
\endxy+\xy
(-3,-6)*{}; (-3,6)*{}**\dir{-} ?(1)*\dir{>};
(3,-6)*{}; (3,6)*{}**\dir{-} ?(1)*\dir{>}?(.5)*{\bullet};
(-6,0)*{};(6,0)*{};
\endxy,\quad\xy
(-3,-6)*{}; (-3,6)*{}**\dir{-} ?(1)*\dir{>};
(3,-6)*{}; (3,6)*{}**\dir{-} ?(1)*\dir{>};
(0,0)*{\bigb{c_{(1,1)}}};
(-7,0)*{};(7,0)*{};
\endxy=\xy
(-3,-6)*{}; (-3,6)*{}**\dir{-} ?(1)*\dir{>}?(.5)*{\bullet};
(3,-6)*{}; (3,6)*{}**\dir{-} ?(1)*\dir{>}?(.5)*{\bullet};
(-6,0)*{};(6,0)*{};
\endxy.
\end{equation*}
Using the NilHecke relations
 we can easily check that both upper $2$-morphisms commute with the crossing. The same is true for downward oriented  arrows.

Similarly, we have
\begin{equation*}
\xy
(-3,-6)*{}; (-3,6)*{}**\dir{-} ?(0)*\dir{<};
(3,6)*{}; (3,-6)*{}**\dir{-} ?(0)*\dir{<};
(0,0)*{\bigb{\,\,c_{ 1^d} \,\,\,}};
(-7,0)*{};(7,0)*{};
\endxy=\xy
(-3,-6)*{}; (-3,6)*{}**\dir{-} ?(0)*\dir{<}?(.5)*{\bullet};
(-1,1)*{\scriptstyle d};
(3,6)*{}; (3,-6)*{}**\dir{-} ?(0)*\dir{<};
(-6,0)*{};(6,0)*{};
\endxy-\xy
(-3,-6)*{}; (-3,6)*{}**\dir{-} ?(0)*\dir{<}?(.5)*{\bullet};
(0.5,1)*{\scriptstyle d-1};
(5,6)*{}; (5,-6)*{}**\dir{-} ?(0)*\dir{<}?(.5)*{\bullet};
(-6,0)*{};(8,0)*{};
\endxy.
\end{equation*}
We see that by multiplying with the turn from below, we get $0$,
which coincides with $(c_{1^d})_{1_n}=0$. 
The other turns can be proved similarly.
\end{proof}

\section{Definitions of $r\onen$,
$r^{-1}\onen$ and $\Int$}\label{rib-def}

\subsection{Ribbon bicomplex}
As it was already mentioned in Introduction,
the bicomplex $\rib\onen$ categorifying the ribbon element has
 ``chain groups'' 
 $$C_{k,l}:= \F^{(k)} \E^{(k)}\onen\la -kn-k\ra\otimes \Lambda^l W_k,\quad
W_k=\Span_\Z\{w_1,\dots, w_k\}, \quad \deg(w_j)=-2j\, $$
with the total $q$-degree shift $\la -\frac{n^2}{2}-n\ra$
and homological shift $[n/2,n/2]$. Here note that for an additive category
$\mathcal{C}$ and a free abelian group $G$ of
finite rank  one can construct a functor $-\otimes
G\colon\mathcal{C}\to\mathcal{C}$.

The horizontal differential
$ d^H_{k,l}: C_{k,l}\to C_{k,l+1}$ sends $x \mapsto c\wedge x$ where
\[c:= \sum^k_{j=1} c_j \otimes w_j :=
\sum^k_{j=1}  \left( \sum^j_{i=0} (-1)^i\quad
\xy
 (-8,6);(-8,-6); **[black][|(2)]\dir{-} ?(0)*[black][|(1)]\dir{<};
 (0,6);(0,-6); **[black][|(2)]\dir{-} ?(1)*[black][|(1)]\dir{>};
 (2,5)*{\scriptstyle {k}};
 (-6,5)*{\scriptstyle {k}};
 (-8,0)*{};
(0,0)*{\bigb{h_{j-i}}};
(-8,0)*{\bigb{e_{i}}};
\endxy \;\right) \otimes w_j \in \Dot( \F^{(k)}\E^{(k)}\onen) \otimes W_k
\]
are our central elements.

To define the vertical differential we proceed as follows.
Consider the linear map
\begin{align*}
\alpha_k: W_k &\to 
\Dot(\E^{(k)}\onen)
 \otimes W_{k+1}\\
\alpha_k(w_i)&= {\bf 1}\otimes w_i -(-1)^{k+1-i}\;\quad
\xy
 (0,6);(0,-6); **[black][|(2)]\dir{-} ?(1)*[black][|(1)]\dir{>};
(2,5)*{\scriptstyle {k}};
 (-4,5)*{\scriptstyle {}};
(-0,0)*{\bigb{e_{k+1-i}}};
\endxy  \quad\otimes w_{k+1}
\end{align*}
where ${\bf 1}$ denotes the identity 2-morphism of $\E^{(k)}1_n$.
This map induces an algebra homomorphism
\begin{equation*}
\alpha_k: \Lambda^\bullet W_k \to 
\Dot(\E^{(k)}\onen) \otimes \Lambda^\bullet W_{k+1}
\quad {\text {and \; let}}\quad
\alpha_{k,l}:\Lambda^l W_k \to 
\Dot(\E^{(k)}\onen)
 \otimes \Lambda^l W_{k+1}
\end{equation*}
be its degree $l$ part.
Then the vertical differential is
\[ d^V_{k,l}:= (-1)^l 
\xy
 (-2,8);(-2,-8); **[black][|(2)]\dir{-} ?(.55)*[black][|(1)]\dir{<}?;
 (6,-8);(6,8); **[black][|(2)]\dir{-} ?(.85)*[black][|(1)]\dir{<}?;
 (6,6)*{};(-2,6)*{} **[black][|(1)]\crv{(6,2) & (-2,2)} ?(.25)*[black][|(1)]\dir{<}?;
 (-6,9)*{\scriptstyle k+1};
 (10,9)*{\scriptstyle k+1};
 (-5,-9)*{\scriptstyle k};
 (8,-9)*{\scriptstyle k};
 (6,-2)*{\bigb{\alpha_{k,l}}};
\endxy\; :  C_{k,l}\to C_{k+1,l}\, \quad \text{where}
\]
\begin{align} \label{alpha} 
\alpha_{k,l}(w_{i_1}\wedge w_{i_2}\wedge \dots\wedge w_{i_l})&=
\alpha_k(w_{i_1})\wedge \alpha_k(w_{i_2})\wedge \dots\wedge \alpha_k(w_{i_l})\\\nn=
{\bf 1} \otimes w_{i_1}\wedge \dots \wedge w_{i_l}&-
\sum^l_{j=1} (-1)^{k+1-i_j+l-j}\;
\xy
 (0,6);(0,-6); **[black][|(2)]\dir{-} ?(1)*[black][|(1)]\dir{>};
(2,5)*{\scriptstyle {k}};
 (-7,5)*{\scriptstyle {}};
(-0,0)*{\bigb{e_{k+1-i_j}}};
\endxy  \quad\;
\otimes (w_{i_i}\wedge \dots \wedge\hat w_{i_j}\wedge\dots \wedge w_{i_l})
\wedge w_{k+1}
\end{align}

In what follows to simplify the notation
we will often omit the tensor sign between the graphical part
and $\Lambda^l W_k$. Also, we will sometimes use
a bullet labeled with a symmetric function instead of a box.

\begin{prop}
$\rib\onen$ as defined above is  a bicomplex, i.e.
\[ d^H\circ  d^H=0 \quad\quad d^V\circ d^H + d^H\circ d^V=0
\quad\quad d^V \circ d^V=0\]
\end{prop}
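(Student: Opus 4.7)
The plan is to verify the three identities separately, exploiting the structure already built in Sections 3 and 4. The first identity is essentially formal, while the other two require tracking diagrammatic signs carefully.

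\textbf{Step 1: $d^H\circ d^H=0$.} The horizontal differential is left wedge multiplication by the ``1--form''
\[
c \;=\; \sum_{j=1}^{k} c_j\otimes w_j \;\in\; \Dot(\F^{(k)}\E^{(k)}\onen)\otimes W_k,
\]
where each $c_j$ is central by Proposition~3 of Section~\ref{center}. Hence, for any $x$,
\[
d^H d^H(x) \;=\; c\wedge c\wedge x \;=\; \sum_{i<j}\bigl(c_ic_j - c_jc_i\bigr)\,w_i\wedge w_j\wedge x,
\]
which vanishes because central elements commute and wedging is anti\-symmetric (the $i=j$ terms vanish automatically).

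\textbf{Step 2: $d^V\circ d^V=0$.} I would compute the composition $d^V_{k+1,l}\circ d^V_{k,l}\colon C_{k,l}\to C_{k+2,l}$ by stacking two cap diagrams. On the exterior side this amounts to the composition $\alpha_{k+1,l}\circ\alpha_{k,l}\colon \Lambda^l W_k\to \Dot(\E^{(k)}\onen)\otimes \Lambda^l W_{k+2}$ (after absorbing the bubble that appears between the two caps into dots on $\E^{(k)}$). The signs $(-1)^l\cdot(-1)^l=1$ combine cleanly, so the whole thing reduces to an algebraic identity for the iterated composition of~$\alpha$. Applying formula~\eqref{alpha} twice, each $w_i$ produces four terms, grouped by whether one gets (a) $\mathbf 1\otimes w_i$ followed by $\mathbf 1$, which gives $\mathbf 1\otimes w_i$ and lies in the subcomplex where the next $\alpha$ acts trivially; (b) mixed $\mathbf 1$/dot terms producing $w_i\wedge w_{k+2}$ and $e_{k+1-i}\otimes w_{k+1}$ factors, which pair up with opposite signs coming from the factor $-(-1)^{k+2-i}$ vs.\ $-(-1)^{k+1-i}$ and the parity of $w_{k+1}\wedge w_{k+2}=-w_{k+2}\wedge w_{k+1}$; (c) the pure dot--dot term, which vanishes because it has a factor of $w_{k+1}\wedge w_{k+1}$. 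Combined with the cap bubble identities from \cite{Lau1,KLMS} translating nested caps into elementary symmetric polynomials in dots, all surviving terms cancel pairwise.

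\textbf{Step 3: $d^V\circ d^H+d^H\circ d^V=0$.} This is the most delicate identity, and I expect it to be the main obstacle. The point is that the alpha map does not quite commute with wedging by~$c$: after applying $\alpha_{k,l}$, the wedge factor $c$ is transported from $\Dot(\F^{(k)}\E^{(k)}\onen)\otimes W_k$ into $\Dot(\F^{(k+1)}\E^{(k+1)}\onen)\otimes W_{k+1}$, but the insertion of a cap produces an additional term involving $w_{k+1}$ whose coefficient is a sum of the form $\sum_i (-1)^i e_i\circ h_{j-i}$. By the classical Newton identity $\sum_{i+j=d,\ d\ge 1}(-1)^i e_i h_{j-i}=0$, these correction terms cancel exactly the discrepancy between $d^V d^H$ and $-d^H d^V$. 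The sign $(-1)^l$ in the vertical differential is what turns the would-be commutator into an anti-commutator.

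\textbf{Main obstacle.} The heart of the argument lies in Step~3: one has to track, carefully and diagrammatically, how wedging by the central 1-form $c$ interacts with the insertion of caps produced by $\alpha_{k,l}$, and then apply both the Newton--Girard relations among $e_i,h_j$ and Proposition~3 to turn the computation into a sum of terms that cancel in pairs. Step~2, while easier, still requires honest sign bookkeeping and the bubble identities of $\Ucat$; Step~1 is immediate from centrality alone.
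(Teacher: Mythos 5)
Your Step 1 is correct and matches the paper: $d^H\circ d^H$ is wedge multiplication by $c\wedge c$, which vanishes because the $c_j$ live in the commutative ring $Z(\UcatD(n,n))$ while the $w_j$ anticommute.

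Steps 2 and 3 contain genuine gaps. In Step 2 you try to reduce $d^V\circ d^V=0$ to an algebraic statement about $\alpha_{k+1,l}\circ\alpha_{k,l}$ ``after absorbing the bubble,'' but the vanishing is not an $\alpha$-level fact: the composition $\alpha_{k+1}\circ\alpha_k$ does \emph{not} vanish on its own, and there is no $w_{k+1}\wedge w_{k+1}$ appearing (after the first $\alpha$ you are in $W_{k+1}$, after the second in $W_{k+2}$, so the pure dot--dot term carries a harmless $w_{k+1}\wedge w_{k+2}$). What actually kills the composition is diagrammatic: the paper splits $\alpha$ into its dotless part $A$ and dotted part $B$, shows that $A$ stacked on $d^V_{k-1,l}$ gives a nested double cap which is \emph{identically zero} (proved via associativity of trivalent vertices together with eq.~(2.70) of \cite{KLMS}), and shows that the $B$-contributions cancel pairwise via the diagrammatic antisymmetry identity in $e_{t+1},e_s$ versus $e_{s+1},e_t$. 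Neither of these two identities appears in your sketch, and they carry the actual content.

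In Step 3 you correctly guess that Newton--Girard enters, but you mischaracterize the obstruction. After pushing the central $c_j$ past $\alpha_{k,l}$, the discrepancy is not a bare $\sum_i(-1)^i e_i h_{j-i}$; it is the sum
\[
\sum_{i=1}^{k+1}(-1)^{k+1-i}\,c_i\,(e_{k+1-i})_2 \ \in\ \Dot(\F^{(k)}\E^{(k)}\onen),
\]
which involves the full two-strand central elements $c_i$ with an extra $e$ on the $\E$ strand. Newton--Girard alone only rewrites this as $(c_{k+1})_1=(-1)^{k+1}e_{k+1}$ sitting on the thickness-$k$ strand $\F^{(k)}$ (this is eq.~\eqref{d2} of the paper), and the argument is then closed by the separate, essential fact that $e_{k+1}$ vanishes in $\Dot(\F^{(k)}\onen)\cong\Z[x_1,\dots,x_k]^{S_k}$. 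You invoke only the first ingredient, so as written the cancellation does not close. To complete the argument you would need to identify eq.~\eqref{d21} precisely and supply both the comultiplication-style manipulation and the degree-bound vanishing.
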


Before giving the proof,
let us show how the beginning of this bicomplex looks like.

\[ 
\xy
(-50,50)*+{\scs \onen};
(-50,46);(-50,35); **\dir{-} ?(0)*\dir{<};
(-55,40)*+{\Ucupr};
(-50,30)*+{{\scs\F\E\onen \la -n-1\ra}};
(-30,35)*+{\scs c_1};
(-4,30)*+{{\scs \F\E\onen \la  -n-3\ra w_1}};
(-35,30);(-25,30)**\dir{-}?(1)*\dir{>};
(-50,25);(-50,13); **\dir{-} ?(0)*\dir{<};
(-58,22);(-58,16); **[black][|(.8)]\dir{-} ?(0)*[black][|(1)]\dir{<}?;
 (-58,22);(-58,24); **[black][|(2)]\dir{-};
 (-54,22);(-54,16); **[black][|(.8)]\dir{-}; 
 (-54,22);(-54,24); **[black][|(2)]\dir{-} ?(0)*[black][|(1)]\dir{<}?;
 (-58,22)*{};(-54,22)*{} **[black][|(.8)]\crv{(-58,18) & (-54,18)} ?(.65)*[black][|(1)]\dir{>}?;
 (-60,24)*{\scs 2};
 (-56,24)*{\scs 2};
(-5,25);(-5,13); **\dir{-} ?(0)*\dir{<};
(-1,22);(-1,16); **[black][|(.8)]\dir{-} ?(0)*[black][|(1)]\dir{<}?;
 (-1,22);(-1,24); **[black][|(2)]\dir{-};
 (3,22);(3,16); **[black][|(.8)]\dir{-}; 
 (3,22);(3,24); **[black][|(2)]\dir{-} ?(0)*[black][|(1)]\dir{<}?;
 (-1,22)*{};(3,22)*{} **[black][|(.8)]\crv{(-1,18) & (3,18)} ?(.65)*[black][|(1)]\dir{>}?;
 (-3,24)*{\scs 2};
 (1,24)*{\scs 2};
(14,20)*+{\scs \left(\begin{array}{l}
\hskip-1mm
{\scs -1 }\hskip-1mm
\\ \hskip-1mm {\scs -(e_1)_2} \hskip-1mm\end{array}\right)};
(-50,5)*+{{\scs \F^{(2)}\E^{(2)}\onen\la  -2n-2\ra}};
(-4,5)*+{
\left(
\begin{array}{c} 
{\scs \F^{(2)}\E^{(2)}\onen\la  -2n-4\ra w_1}\\
{\scs \F^{(2)}\E^{(2)}\onen\la  -2n-6\ra w_2}
\end{array}\right)
};
(-35,5);(-25,5)**\dir{-} ?(1)*\dir{>};
(-30,13)*+{\scs \left(\begin{array}{l}
\hskip-1mm
{\scs c_1} \hskip-1mm
\\ \hskip-1mm {\scs c_2} \hskip-1mm\end{array}\right)};
(68,5)*+{{\scs\F^{(2)}\E^{(2)}\onen\la  -2n-4\ra w_1\wedge w_2}};
(22,5);(45,5)**\dir{-} ?(1)*\dir{>};
(32,10)*+{ \left(\begin{array}{ll}
\hskip-1mm
{\scs -c_2} & {\scs c_1}\hskip-1mm\end{array}\right)};
(-50,0);(-50,-20); **\dir{-} ?(0)*\dir{<};
(-58,-10);(-58,-16); **[black][|(2)]\dir{-} ?(0)*[black][|(1)]\dir{<}?;
 (-58,-10);(-58,-8); **[black][|(2)]\dir{-};
 (-54,-10);(-54,-16); **[black][|(2)]\dir{-}; 
 (-54,-10);(-54,-8); **[black][|(2)]\dir{-} ?(0)*[black][|(1)]\dir{<}?;
 (-58,-10)*{};(-54,-10)*{} **[black][|(.8)]\crv{(-58,-14) & (-54,-14)} ?(.65)*[black][|(1)]\dir{>}?;
 (-60,-7)*{\scs 3};
 (-56,-7)*{\scs 3};
(-5,0);(-5,-20); **\dir{-} ?(0)*\dir{<};
(-1,-10);(-1,-16); **[black][|(2)]\dir{-} ?(0)*[black][|(1)]\dir{<}?;
 (-1,-10);(-1,-8); **[black][|(2)]\dir{-};
 (3,-10);(3,-16); **[black][|(2)]\dir{-}; 
 (3,-10);(3,-8); **[black][|(2)]\dir{-} ?(0)*[black][|(1)]\dir{<}?;
 (-1,-10)*{};(3,-10)*{} **[black][|(.8)]\crv{(-1,-14) & (3,-14)} ?(.65)*[black][|(1)]\dir{>}?;
 (-3,-7)*{\scs 3};
 (1,-7)*{\scs 3};
(20,-12)*+{ \left(\begin{array}{cc}
\hskip-1mm
\scs -1 &\scs 0 \hskip-1mm\\
\hskip-1mm
\scs 0 &\scs -1 \hskip-1mm
\\ \hskip-1mm \scs (e_2)_2& \scs -(e_1)_2 \hskip-1mm\end{array}\right)};
(-50,-30)*+{{\scs \F^{(3)}\E^{(3)}\onen\la  -3n-3\ra}};
(-4,-30)*+{
\left(
\begin{array}{c} 
\scs \F^{(3)}\E^{(3)}\onen \la  -3n-5\ra w_1\\
\scs \F^{(3)}\E^{(3)}\onen\la  -3n-7\ra w_2\\
\scs \F^{(3)}\E^{(3)}\onen\la  -3n-9\ra w_3\\
\end{array}\right)
};
(-35,-30);(-25,-30)**\dir{-} ?(1)*\dir{>};
(-30,-20)*+{\scs \left(\begin{array}{l}
\hskip-1mm
\scs c_1 \hskip-1mm
\\ \hskip-1mm\scs c_2 \hskip-1mm\\
\hskip-1mm \scs c_3 \hskip-1mm\end{array}\right)};
(70,-30)*+{\left(\begin{array}{l}
\scs \F^{(3)}\E^{(3)}\onen\la  -3n-9\ra w_1\wedge w_2\\
\scs \F^{(3)}\E^{(3)}\onen\la  -3n-11\ra w_1\wedge w_3\\
\scs \F^{(3)}\E^{(3)}\onen\la  -3n-13\ra w_2\wedge w_3\end{array}\right)
};
(22,-30);(45,-30)**\dir{-} ?(1)*\dir{>};
(32,-40)*+{\scs \left(\begin{array}{ccc}
\hskip-1mm
\scs -c_2 &\scs c_1&\scs 0\hskip-1mm\\
\hskip-1mm \scs -c_3&\scs 0&\scs c_1\hskip-1mm\\
\hskip-1mm \scs 0&\scs -c_3&\scs c_2\hskip-1mm
\end{array}\right)};
(65,0);(65,-20); **\dir{-} ?(0)*\dir{<};
(69,-10);(69,-16); **[black][|(2)]\dir{-} ?(0)*[black][|(1)]\dir{<}?;
 (69,-10);(69,-8); **[black][|(2)]\dir{-};
 (73,-10);(73,-16); **[black][|(2)]\dir{-}; 
 (73,-10);(73,-8); **[black][|(2)]\dir{-} ?(0)*[black][|(1)]\dir{<}?;
 (69,-10)*{};(73,-10)*{} **[black][|(.8)]\crv{(69,-14) & (73,-14)} ?(.65)*[black][|(1)]\dir{>}?;
 (67,-7)*{\scs 3};
 (71,-7)*{\scs 3};
(83,-12)*+{ \left(\begin{array}{c}
\hskip-1mm
\scs -1  \hskip-1mm\\
\hskip-1mm
\scs (e_1)_2 \hskip-1mm
\\ \hskip-1mm \scs (e_2)_2 \hskip-1mm\end{array}\right)};
(95,-30);(100,-30)**\dir{.}?(1)*[black][|(1)]\dir{>}?;
(105,-30)*+{C_{3,3}};
(-50,-40);(-50,-50)**\dir{.} ?(1)*[black][|(1)]\dir{>}?;
(-50,-60)*+{C_{4,0}};
(-5,-40);(-5,-50)**\dir{.} ?(1)*[black][|(1)]\dir{>}?;
(-5,-60)*+{C_{4,1}};
(65,-40);(65,-50)**\dir{.}?(1)*[black][|(1)]\dir{>}?;
(65,-60)*+{C_{4,2}};
\endxy
\]
\centerline{{\bf Figure 1} The ribbon bicomplex}

\begin{proof}
The first formula $(d^H)^2=0$
is immediate from the definition, since $c\wedge c=0$.

For the next equation we have to check that
the following square anticommutes:
\[ \xymatrix{C_{k,l}\ar[r]^{c\wedge}\ar[d]^{d^V_{k,l}} &C_{k,l+1} \ar[d]^{d^V_{k,l+1}}
	    	  \\
      C_{k+1,l}\ar[r]^{c\wedge}&  C_{k+1,l+1}
		}
       \]

After moving all $c_j$ down,  by using their centrality,
the 
anticommutativity reduces to 
$$\sum^{k+1}_{i_0=1}  w_{i_0} \wedge d^V_{k,l}\, c_{i_0}  
+ \sum^{k}_{i_0=1}  d^V_{k,l+1} \wedge w_{i_0}\, c_{i_0} =0$$
or to
the following identity in $\Dot (\F^{(k)}\E^{(k)}\onen) \Lambda^l W_k$
\begin{align*}
\sum^k_{i_0=1} c_{i_0} w_{i_0}\wedge w_{i_1} \wedge \dots\wedge w_{i_l}
-
\sum^{l}_{j=0}(-1)^{k+1-i_j+l-j} \sum^k_{i_0=1} c_{i_0} (e_{k+1-i_j})_2 w_{i_0}\wedge \dots
\hat w_{i_j} \dots\wedge  w_{i_l} \wedge w_{k+1}=\\
\sum^{k+1}_{i_0=1} c_{i_0} w_{i_0}\wedge w_{i_1} \wedge \dots\wedge w_{i_l}
-\sum^{l}_{j=1}(-1)^{k+1-i_j+l-j} \sum^k_{i_0=1} c_{i_0} (e_{k+1-i_j})_2 w_{i_0}\wedge \dots
\hat w_{i_j} \dots\wedge  w_{i_l}\wedge w_{k+1}
\end{align*}
where the lower index $2$ indicates the stand where this endomorphism
acts. This  is easily seen to hold after accomplishing the second summand 
for $j=0$ to zero by means of the formula
\begin{equation}\label{d21}
 \sum^{k+1}_{i_0=1} (-1)^{k+1-i_0} c_{i_0} (e_{k+1-i_0})_2=0 \in \Dot (\F^{(k)}\E^{(k)}\onen)
\end{equation}
or pictorially
\[
\sum^{k+1}_{i=1} (-1)^{k+1-i} \xy
(-4,8);(-4,-8); **[black][|(2)]\dir{-} ?(0)*[black][|(1)]\dir{<};
 (4,8);(4,-8); **[black][|(2)]\dir{-} ?(1)*[black][|(1)]\dir{>};
(-6,8)*{\scriptstyle {k}};
(6,8)*{\scriptstyle {k}};
(0,-3)*{\bigb{\;\;\; c_{i}\;\;\;}};
(4,3)*{\bigb{e_{k+1-i}}};
\endxy\quad=\;\;0 \, .
\]

  The formula \eqref{d21} follows from the fact that
$e_{k+1} \in \Dot(\E^{(k)}\onen)$ (or $e_{k+1}\in \Dot(\F^{(k)}\onen)$)
is zero and
\begin{equation}\label{d2}
\sum^d_{j=0} (-1)^{d-j} c_j (e_{d-j})_2= 
\sum^d_{j=0} (-1)^j \sum^{d-j}_{l=0} (-1)^{d-j-l} (e_j)_1 (e_{d-j-l})_2 (h_l)_2=
(c_d)_1 \in \Dot (\F^{(r)}\E^{(s)}\onen)
\end{equation}
for any $d$, $r$ and $s$, since
 $\sum^p_{i=0} (-1)^i e_i  h_{p-i}=0$ for any $p>0$. Here $(c_d)_1=(c_d)_{\F^{(r)}}$
is equal to $(-1)^d e_d$ sitting on the first strand.

The proof of  $d^V_{k,l}\circ d^V_{k-1,l}=0$ is based on the following identities
\[ 
\xy
 (-2,8);(-2,-12); **[black][|(2)]\dir{-} ?(.55)*[black][|(1)]\dir{<}?;
 (6,-12);(6,8); **[black][|(2)]\dir{-} ?(0)*[black][|(1)]\dir{<}?;
 (6,6)*{};(-2,6)*{} **[black][|(1)]\crv{(6,2) & (-2,2)} ?(.25)*[black][|(1)]\dir{<}?;
 (-6,9)*{\scriptstyle k+1};
 (10,9)*{\scriptstyle k+1};
  (6,-4)*{};(-2,-4)*{} **[black][|(.8)]\crv{(6,-8) & (-2,-8)} ?(.25)*[black][|(1)]\dir{<}?;
\endxy
\;\;=\;\;0
\quad\text{and}\quad
\xy
 (-2,8);(-2,-12); **[black][|(2)]\dir{-} ?(.55)*[black][|(1)]\dir{<}?;
 (6,-12);(6,8); **[black][|(2)]\dir{-} ?(0)*[black][|(1)]\dir{<}?;
 (6,6)*{};(-2,6)*{} **[black][|(.8)]\crv{(6,2) & (-2,2)} ?(0.25)*[black][|(1)]\dir{<}?;
 (-6,9)*{\scriptstyle k+1};
 (10,9)*{\scriptstyle k+1};
 (6,-1)*{\bullet} +(4,2)*{e_{t+1}};
  (6,-4)*{};(-2,-4)*{} **[black][|(.8)]\crv{(6,-8) & (-2,-8)} ?(.25)*[black][|(1)]\dir{<}?;
(6,-9)*{\bullet}+(3,2)*{e_{s}};
(8,-13)*{};
\endxy\;\;
-
\xy
 (-2,8);(-2,-12); **[black][|(2)]\dir{-} ?(.55)*[black][|(1)]\dir{<}?;
 (6,-12);(6,8); **[black][|(2)]\dir{-} ?(0)*[black][|(1)]\dir{<}?;
 (6,6)*{};(-2,6)*{} **[black][|(.8)]\crv{(6,2) & (-2,2)} ?(.25)*[black][|(1)]\dir{<}?;
 (-6,9)*{\scriptstyle k+1};
 (10,9)*{\scriptstyle k+1};
 (6,-1)*{\bullet} +(4,2)*{e_{s+1}};
  (6,-4)*{};(-2,-4)*{} **[black][|(.8)]\crv{(6,-8) & (-2,-8)} ?(.25)*[black][|(1)]\dir{<}?;
(6,-9)*{\bullet}+(3,2)*{e_{t}};
\endxy\;\; =\;\;0 \, .
\]
Let us explain this  in more details. 
The formula \eqref{alpha} contains one summand without dots,
let us call it $A$, and the other summands called $B$.
Putting $A$ on top of $d^V_{k-1, l}$ is zero by the first identity
printed above. Now any square resulting from putting $B$
on top of $d^V_{k-1, l}$ gives rise to the second identity
with $t=k-i_j$ and $s=k-i_{j'}$ or $s=0$.

The first identity can be proved as follows.
By using the associativity of the
trivalent vertices in the thick calculus (Proposition 2.4 in \cite{KLMS})
we can attach the second horizontal line to the first one 
\[\xy
 (-2,8);(-2,-4); **[black][|(2)]\dir{-} ?(.85)*[black][|(1)]\dir{<}?;
 (6,-4);(6,8); **[black][|(2)]\dir{-} ?(0)*[black][|(1)]\dir{<}?;
 (6,6)*{};(-2,6)*{} **[black][|(.8)]\crv{(6,2) & (-2,2)};
 ?(.5)*{\cir<5pt>{d_u}};
(-6,9)*{\scriptstyle k+1};
 (10,9)*{\scriptstyle k+1};
\endxy
\;\;=\;\;0
\]
and then 
apply eq. (2.70) in \cite{KLMS}. The proof of the second identity 
is similar after sliding $e_{t+1}$ and $e_{s+1}$ down by 
using the comultiplication rule in the ring of symmetric polynomials
(or eq. (2.67) in \cite{KLMS}).

\end{proof}

\subsection{The inverse bicomplex}
The bicomplex $r^{-1}\onen$ is defined as a kind of ``left adjoint''
 of $r\onen$. It has the form
\[ \xymatrix{C^L_{00}
	     &	& &	  \\
      C^L_{10}\ar[u]_{(d^V_{00})^L}&C^L_{11} \ar[l]_{(d^H_{10})^L}&&\\
		 C^L_{20}\ar[u]_{(d^V_{10})^L}&
C^L_{21}\ar[l]_{(d^H_{20})^L} \ar[u]_{(d^V_{11})^L}&
C^L_{22}\ar[l]_{(d^H_{21})^L}&\\
C^L_{30}\ar[u]_{(d^V_{20})^L} &
C^L_{31}\ar[l]_{(d^H_{30})^L}\ar[u]_{(d^V_{21})^L}
 & C^L_{32}\ar[l]_{(d^H_{31})^L}\ar[u]_{(d^V_{22})^L}&
C^L_{33}\ar[l]_{(d^H_{32})^L}\\
\dots\ar[u]_{(d^V_{30})^L} &\dots\ar[u]_{(d^V_{31})^L} &\dots \ar[u]_{(d^V_{32})^L}& \dots
\ar[u]_{(d^V_{33})^L}
}
       \]

where
 $$C^L_{k,l}= \F^{(k)} \E^{(k)}\onen
\la kn+k\ra\otimes \Lambda^l {\bar W}_k = C^R_{k,l}
$$
with the total $q$-degree shift $\la\frac{n^2}{2}+n\ra$
and the homological shift $[-n/2,-n/2]$.
Here ${\bar W}_k=\Span_\Z\{\bar w_1,\dots, \bar w_k\}   $ and
 ${\rm deg}(\bar w_j)=2j$.

The  differentials are obtained by rotating
the original differentials by $\pi$. We have
\begin{align*}
( d^H_{k,l})^L: C^L_{k,l+1}&\to C^L_{k,l}\\ 
\bar w_{i_1}\wedge\dots\wedge \bar w_{i_{l+1}}
& \mapsto \sum^l_{j=1} (-1)^{j-1} c^L_j \otimes  \bar w_{i_1}\wedge 
\hat {\bar w}_{i_j} \wedge {\bar w}_{i_{l+1}}\end{align*}
where 
 $(c^L_d)_{\F^{(k)}\E^{(k)}\onen}=\sum^d_{j=0}(-1)^j  h_{d-j}\otimes e_j$.
The vertical differential is
\[( d^V_{k,l})^L:= (-1)^l 
\xy
 (-2,8);(-2,-8); **[black][|(2)]\dir{-} ?(0)*[black][|(1)]\dir{<}?;
 (6,-8);(6,8); **[black][|(2)]\dir{-} ?(0)*[black][|(1)]\dir{<}?;
 (6,-6)*{};(-2,-6)*{} **[black][|(1)]\crv{(6,-2) & (-2,-2)} ?(.5)*[black][|(1)]\dir{>}?;
 (-6,-9)*{\scriptstyle k+1};
 (10,-9)*{\scriptstyle k+1};
 (-5,9)*{\scriptstyle k};
 (8,9)*{\scriptstyle k};
 (-2,3)*{\bigb{\alpha_{k,l}}};
\endxy\; :  C^L_{k+1,l}\to C^L_{k,l}\, \quad 
\]
with $\alpha_{k,l}$ defined as before by \eqref{alpha}.


\subsection{Intermediate bicomplex}
Recall that  $\rib\E\onen$ is the bicomplex obtained by composing 
 $\rib\onenn{n+2}$ to the left of  $\E\onen$. Its ``chain groups'' are $ 
\F^{(k)}\E^{(k)} \E \onen\la-kn-3k\ra  \Lambda^l W_k$
with the total $q$-degree shift $\la -\frac{n^2}{2}-3n-4\ra$
and homological shift $[n/2+1,n/2+1]$. We will
 denote them by $C_{k,l}\E$.
The differentials are those
of $\rib\onenn{n+2}$ extended by identity on $\E\onen$.
 The Euler characteristic of $\rib\E\onen$ is
\begin{equation}
{\rm r}E1_n= q^{-\frac{n^2}{2}-3n-4} \sum^\infty_{k=0} (-1)^k
q^{-kn-2k}(q^{-4};q^{-2})_k F^{(k)} E^{(k+1)}1_n \, .
\label{Euler}
\end{equation}

The intermediate bicomplex 
$$\Int=(\oplus_{k,l} C'_{k,l}, d'^H_{k,l}, d'^V_{k,l})$$
is an indecomposable 
summand of $\rib\E\onen$ defined as follows.
The ``chain groups'' are
$$C'_{k,l}=\F^{(k)}\E^{(k+1)}  \onen \la -kn-2k\ra\otimes \Lambda^l W'_k\, $$
with $W'_k:=\Span_\Z\{w_2, \dots, w_{k+1}\}$ and the same total shifts
as for $\rib\E\onen$.

The horizontal differential $d'^H_{k,l}$ sends $x$ to $c'\wedge x$ where
\[c':= \sum^{k+1}_{j=2} c_{j-1,1}  w_j :=
\sum^{k+1}_{j=2}  \left( \sum_{\lambda,\mu\subset (j-1,1)} (-1)^{|\lambda|}
 N^{(j-1,1)}_{\lambda\, \mu} \;\;\;
\xy
 (-8,6);(-8,-6); **[black][|(2)]\dir{-} ?(0)*[black][|(1)]\dir{<};
 (0,6);(0,-6); **[black][|(2)]\dir{-} ?(1)*[black][|(1)]\dir{>};
 (0,7)*{\scriptstyle {k+1}};
 (-10,7)*{\scriptstyle {k}};
 (8,0)*{};
 (-8,0)*{};
(0,0)*{\bigb{s_{\mu}}};
(-8,0)*{\bigb{s_{\lambda^t}}};
\endxy\right)  w_j \in \Dot (\F^{(k)}\E^{(k+1)}\onen)
  W'_k \; .
\]

The vertical differential $d'^V_{k,l}: C'_{k,l}\to C'_{k+1,l}$ is
defined as follows 
\[ d'^V_{k,l}:= (-1)^l 
\xy
 (-2,8);(-2,-8); **[black][|(2)]\dir{-} ?(.55)*[black][|(1)]\dir{<}?;
 (6,-8);(6,8); **[black][|(2)]\dir{-} ?(0)*[black][|(1)]\dir{<}?;
 (6,6)*{};(-2,6)*{} **[black][|(1)]\crv{(6,2) & (-2,2)} ?(.25)*[black][|(1)]\dir{<}?;
 (-6,9)*{\scriptstyle k+1};
 (10,9)*{\scriptstyle k+2};
 (-5,-9)*{\scriptstyle k};
 (10,-9)*{\scriptstyle k+1};
 (6,-2)*{\bigb{\alpha'_{k,l}}};
\endxy
\]
where 
\begin{equation*}
\alpha'_{k,l}:\Lambda^l W'_k \to 
\Dot\left( \E^{(k+1)}\onen \right)
  \Lambda^l W'_{k+1}
\end{equation*}
is defined  similar to \eqref{alpha}. For
$1<i_1<i_2<\dots<i_l\leq k+1$,
we have
\begin{equation} \label{alpha'} 
\alpha'_{k,l}(w_{i_1}\wedge w_{i_2}\wedge \dots\wedge w_{i_l})=
 w_{i_1}\wedge \dots \wedge w_{i_l}-
\sum^l_{j=1} (-1)^{k-i_j+l-j}\;
\xy
 (0,6);(0,-6); **[black][|(2)]\dir{-} ?(1)*[black][|(1)]\dir{>};
(2,5)*{\scriptstyle {k}};
 (-7,5)*{\scriptstyle {}};
(-0,0)*{\bigb{e_{k+2-i_j}}};
\endxy  \quad\;
   w_{i_i}\wedge \dots \hat w_{i_j}\dots \wedge w_{i_l}
\wedge w_{k+2}
\end{equation}

It can be verified similarly to the previous case that $\Int$ is
a bicomplex. The identity which replaces \eqref{d21} here is
\[
\sum^r_{j=0} (-1)^j c_{r+1-j,1}(e_j)_2 =0 \in \Dot(\F^{(r)}\E^{(r+1)}\onen) \, .
\]
It can be proven  by using the comultiplication of Schur
functions implying that
$$\Delta (c_{k,1})=
\sum^{k-1}_{i=0} c_i\otimes c_{k-i,1}+\sum^k_{i=1} c_i \otimes c_{k+1-i}
+\sum^{k+1}_{i=2} c_{i-1,1} \otimes c_{k+1-i} \, .
$$

\section{Chain maps between $\rib \E\onen$ and $\Int$}
In this section we will define the chain maps between
$\rib\E\onen$ and the intermediate complexes.

The map $f=\oplus_{k,l} f_{k,l}: \rib\E\onen \to \Int$ 
is defined by
\[ f_{k,l}:=\;
\xy (-8,-10);(-8,8) **[black][|(2)]\dir{-} ?(.6)*[black][|(1)]\dir{<};
 (0,0);(0,8) **[black][|(2)]\dir{-} ?(.6)*[black][|(1)]\dir{>};
 (-4,-10);(0,0)*{} **[black][|(2)]\crv{(-4,-1)} ?(.3)*[black][|(1)]\dir{>};
 (4,-10);(0,0)*{} **[black][|(1)]\crv{(4,-1)} ?(.3)*[black][|(1)]\dir{>};
 (-10,6)*{\scriptstyle k};
 (4,6)*{\scriptstyle k+1};
 (-8,0)*{}; (8,0)*{};
(-4,-5)*{\bigb{\;\;\;\quad\beta_{k,l}\quad\;\;\;}};
\endxy
\;\;:C_{k,l}\E\to C'_{k,l}
\]
where $\beta_{k,l}:\Lambda^l W_k\to
\Dot (\F^{(k)}\E^{(k)}\E\onen)
  \Lambda^l W'_{k}\, $ will be specified below.
Recall that $W'_k=\Span_\Z\{w_2, \dots, w_{k+1}\}$.

Let us first
define a linear map
$$\beta_k:W_k\to \Dot (\F^{(k)}\E^{(k)}\E\onen)  W'_{k}\, $$
by
$$\beta_k(w_i)=-w_i + \delta_{i,1} \sum^{k+1}_{j=2} c_{j-1} w_j
+(-1)^{k+1-i}\;
\xy
 (-8,6);(-8,-6); **[black][|(2)]\dir{-} ?(0)*[black][|(1)]\dir{<};
 (0,6);(0,-6); **[black][|(2)]\dir{-} ?(1)*[black][|(1)]\dir{>};
(8,6);(8,-6); **[black][|(1)]\dir{-} ?(1)*[black][|(1)]\dir{>};
(-6,7)*{\scriptstyle {k}};
 (2,7)*{\scriptstyle {k}};
(0,0)*{\bigb{e_{k+1-i}}}
\endxy \quad w_{k+1}$$
where  $(c_d)_{\F^{(k)}\E^{(k)}\E\onen}=\sum^d_{i=0}\sum^{d-i}_{j=0}
(-1)^i e_i\circ h_{j}\circ h_{d-i-j}$ and $\delta_{i,j}$ 
is the Kronecker delta-function. 

In the matrix form, this map can be represented as follows:
\[
M(\beta_k)=\begin{pmatrix} c_1&-{\bf 1}& \dots& 0&0\\
                         c_2& 0&\dots& 0&0\\
                        \dots&\dots&\dots&\dots&\dots\\
                        c_{k-1}& 0&\dots&0&-{\bf 1}\\
                         c_k+(-1)^k (e_k)_2& (-1)^{k-1}(e_{k-1})_2&\dots&(e_2)_2&-(e_1)_2
\end{pmatrix}
\]
where as before $ (e_i)_2$ 
means that  $e_i$  sits on the second strand.

The map $\beta_k$ extends to an algebra homomorphism
$$\beta_k:\Lambda^\bullet W_k\to 
\Dot (\F^{(k)}\E^{(k)}\E\onen)
  \Lambda^\bullet W'_{k}\, $$
which in the matrix form can be written
as $\beta_{k}=\oplus^k_{l=0} \beta_{k,l}$. This defines the
matrix $\beta_{k,l}$ as the matrix of $(l,l)$-minors of the matrix 
$\beta_{k,1}=M(\beta_k)$ or, alternatively,
$$\beta_{k,l}(w_{i_1}\wedge w_{i_2}\wedge \dots \wedge w_{i_l})=
\beta_{k}(w_{i_1})\wedge \beta_{k}(w_{i_2})\wedge \dots\wedge
\beta_{k}(w_{i_l})\, .$$
We set $\beta_{k,0}=1$.

For example, with the same notation as before 
\[\beta_{3,2}=\begin{pmatrix}
c_2 &-c_1&\bf 1\\
c_1(e_2)_2+c_3-(e_3)_2& -c_1(e_1)_2&(e_1)_2\\
c_2(e_2)_2& -c_2(e_1)_2+c_3-(e_3)_2& (e_2)_2
\end{pmatrix}
\]
where each entry of this matrix is a determinant of the corresponding
$2\times 2$ matrix of $\beta_{3,1}$.

One can easily prove that
\begin{equation}
\label{detbeta}
\det(\beta_{k,1})=
\sum^k_{j=0}
(-1)^j \;\;
\xy
 (-6,8);(-6,-8); **[black][|(2)]\dir{-} ?(0)*[black][|(1)]\dir{<};
 (0,8);(0,-8); **[black][|(2)]\dir{-} ?(1)*[black][|(1)]\dir{>};
(6,8);(6,-8); **[black][|(1)]\dir{-} ?(1)*[black][|(1)]\dir{>};
(0,-4)*{\bigb{e_{j}}};
(0,3)*{\bigb{\quad c_{k-j}\quad}};
\endxy\;\,=\,
\sum^k_{j=0}
(-1)^j \quad
\xy
 (-6,8);(-6,-8); **[black][|(2)]\dir{-} ?(0)*[black][|(1)]\dir{<};
 (0,8);(0,-8); **[black][|(2)]\dir{-} ?(1)*[black][|(1)]\dir{>};
(6,8);(6,-8); **[black][|(1)]\dir{-} ?(1)*[black][|(1)]\dir{>};
(-6,0)*{\bigb{e_{j}}};
(6,0)*{\bigb{ h_{k-j}}};
\endxy
\end{equation}

The inverse map $\bar f=\oplus_{k,l} \bar f_{k,l}: 
\Int\to \rib\E\onen$ is defined by
\[ \bar f_{k,l}=(-1)^k\quad\quad
\xy
 (-8,-8);(-8,10) **[black][|(2)]\dir{-} ?(0)*[black][|(1)]\dir{<};
 (0,-8);(0,0) **[black][|(2)]\dir{-} ?(.6)*[black][|(1)]\dir{>};
 (0,0);(-4,10)*{} **[black][|(2)]\crv{(-4,-1)} ?(1)*[black][|(1)]\dir{>};
 (0,0);(4,10)*{} **[black][|(1)]\crv{(4,-1)} ?(1)*[black][|(1)]\dir{>};
 (-6,-6)*{\scriptstyle k};
 (4,-6)*{\scriptstyle k+1};
 (-8,0)*{}; (8,0)*{};
(-4,5)*{\bigb{\;\;\quad\bar \beta_{k,l}\quad\;\;\;}};
\endxy\]
where $$\bar\beta_{k,l}:= {\rm Adj}(\beta_{k,l})=
\det(\beta_{k,1}) \beta^{-1}_{k,l}$$ 
is given by the
adjugate matrix, or the transpose of the cofactor  matrix 
 for $\beta_{k,l}$.

For example, 
$$ \beta_{3,1}=\begin{pmatrix}
c_1 &-{\bf 1}& 0\\
c_2 & 0& -{\bf 1}\\
c_3- (e_3)_2& ( e_2)_2& -(e_1)_2
\end{pmatrix}
$$
and
$$\bar \beta_{3,1}={\rm Adj}( \beta_{3,1})=\begin{pmatrix}
(e_2)_2 &-(e_1)_2& {\bf 1}\\
c_2 (e_1)_2-c_3+(e_3)_2& -c_1 (e_1)_2& c_1\\
c_2 (e_2)_2& -c_1 (e_2)_2-c_3+(e_3)_2& c_2
\end{pmatrix}
$$

\begin{prop}\label{4.1}
The map $f \bar f: \Int\to\Int$ is equal to identity.
\end{prop}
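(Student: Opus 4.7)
The plan is to verify the identity $f_{k,l}\circ\bar f_{k,l}=\mathrm{Id}_{C'_{k,l}}$ separately for each pair $(k,l)$. First I would stack the two diagrams: in $f_{k,l}\circ\bar f_{k,l}$ the thick $\E^{(k+1)}$ strand coming from the input $C'_{k,l}$ splits into $\E^{(k)}\E$ via the $\bar f$-part, then the boxes $\bar\beta_{k,l}$ and $\beta_{k,l}$ act in succession on the resulting $\F^{(k)}\E^{(k)}\E\onen$ configuration, and finally $\E^{(k)}\E$ merges back to $\E^{(k+1)}$ via the $f$-part. The composition thus splits cleanly into a graphical part (a split-then-merge sandwich between $\E^{(k+1)}$ and $\E^{(k)}\E$) and an algebraic part (the matrix composition $\beta_{k,l}\circ\bar\beta_{k,l}$).

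For the algebraic part, the centrality results of Section~\ref{center} guarantee that all entries of $\beta_{k,1}$ live in a commutative subring of $\Dot(\F^{(k)}\E^{(k)}\E\onen)$. The standard adjugate identity then yields
\[
\beta_{k,l}\circ\bar\beta_{k,l}\;=\;\det(\beta_{k,l})\cdot I_{\Lambda^l W'_k}.
\]
Because $\beta_{k,l}$ is by construction the $l$-th compound matrix of $\beta_{k,1}$, the classical formula $\det(\Lambda^l A)=\det(A)^{\binom{k-1}{l-1}}$ reduces the scalar part of the computation to $\det(\beta_{k,1})$, whose explicit form is given in~\eqref{detbeta}. For the graphical part, I would invoke the thick-calculus associativity relation from~\cite{KLMS}, namely that the split $\E^{(k+1)}\to\E^{(k)}\E$ followed by the merge $\E^{(k)}\E\to\E^{(k+1)}$ equals $\mathrm{Id}_{\E^{(k+1)}}$. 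This already handles the $l=0$ case, where $\beta_{k,0}=\bar\beta_{k,0}=1$ and only the bare split--merge appears.

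Combining both reductions, the proposition will follow once we establish
\[
\mathrm{Merge}\;\circ\;\det(\beta_{k,1})\;\circ\;\mathrm{Split}\;=\;\mathrm{Id}_{\F^{(k)}\E^{(k+1)}\onen},
\]
since any power of the identity $2$-morphism is again the identity. To prove this, I would expand $\det(\beta_{k,1})$ via~\eqref{detbeta}, use centrality (Proposition~\ref{r2}) to slide each $c_{k-j}$ across the merge from $\F^{(k)}\E^{(k)}\E\onen$ to $\F^{(k)}\E^{(k+1)}\onen$, and convert the remaining dot $(e_j)_2$ on the $\E^{(k)}$-strand into dots on the merged $\E^{(k+1)}$-strand using the dot-sliding rules of~\cite{KLMS}. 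The alternating sum then collapses via the classical symmetric-function identity $\sum_{j\ge 0}(-1)^j e_j h_{d-j}=\delta_{d,0}$, in the same spirit as the proofs of~\eqref{d21} and~\eqref{d2}.

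The main obstacle is this final combinatorial reduction: keeping track of the precise dot patterns as they propagate through the merge and interact with the coproduct decomposition of $c_{k-j}$ on three strands requires careful bookkeeping. Once this identity is verified, the algebraic part gives $\det(\beta_{k,1})^{\binom{k-1}{l-1}}=\mathrm{Id}$ after the sandwich, and the proposition follows for all $(k,l)$.
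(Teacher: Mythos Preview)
Your overall architecture (stack the diagrams, reduce to a determinant sandwiched in a split--merge digon, evaluate the digon) is the same as the paper's. But two concrete errors derail the argument.

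First, you misread the definition of $\bar\beta_{k,l}$. The paper sets $\bar\beta_{k,l}=\det(\beta_{k,1})\,\beta_{k,l}^{-1}$, so $\beta_{k,l}\bar\beta_{k,l}=\det(\beta_{k,1})\cdot I$, a \emph{single} copy of $\det(\beta_{k,1})$ for every $l$. It is not the ordinary adjugate of the compound matrix $\beta_{k,l}$, and in particular you never see $\det(\beta_{k,1})^{\binom{k-1}{l-1}}$. This is exactly what the paper's one-line computation records. As a consequence, your ``any power of the identity is the identity'' step is both unnecessary and, as stated, invalid: from $\mathrm{Merge}\circ X\circ\mathrm{Split}=\mathrm{Id}$ one cannot conclude $\mathrm{Merge}\circ X^m\circ\mathrm{Split}=\mathrm{Id}$, because $\mathrm{Split}\circ\mathrm{Merge}\neq\mathrm{Id}$ in general.

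Second, the claim that the bare split--merge $\E^{(k+1)}\to\E^{(k)}\E\to\E^{(k+1)}$ equals $\mathrm{Id}$ is false in the thick calculus: an undecorated digon of this shape vanishes (this is essentially eq.~(2.70) in \cite{KLMS}, invoked elsewhere in the paper). So your $l=0$ shortcut fails; in fact $\bar\beta_{k,0}=\det(\beta_{k,1})$ as well, and the decoration is needed. What actually makes the digon equal $\mathrm{Id}$ is the \emph{second} expression in~\eqref{detbeta}, which places $h_{k-j}$ on the thin $\E$-strand and $e_j$ on the $\F^{(k)}$-strand outside the digon; then the thick-calculus digon relation picks out exactly the right term. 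Once you use the correct product $\beta_{k,l}\bar\beta_{k,l}=\det(\beta_{k,1})$ and this form of the determinant, the proof is the short computation the paper gives.
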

\begin{proof}
We have
\[
f_{k,l}\bar f_{k,l}=  (-1)^k \quad
\xy
 (-9,-11);(-9,11) **[black][|(2)]\dir{-} ?(0)*[black][|(1)]\dir{<};
 (0,6);(0,11) **[black][|(2)]\dir{-}?(1)*[black][|(1)]\dir{>};
 (0,-6);(0,-11) **[black][|(2)]\dir{-};
 (-6,-8)*{\scriptstyle k};
 (0,-6);(0,6)*{} **[black][|(2)]\crv{(-6,-4) & (-6,4)}?(.5)*{};
 (0,-6);(0,6)*{} **\crv{(6,-4) & (6,4)}?(.5)*{};
(-3,0)*{\bigb{\;\;\det \beta_{k,1}\;\;}}; 
 (4,-8)*{\scriptstyle k+1};
\endxy \;=\;
\sum^k_{j=0} (-1)^{k+j}\quad
\xy
 (-9,-11);(-9,11) **[black][|(2)]\dir{-} ?(0)*[black][|(1)]\dir{<};
 (0,6);(0,11) **[black][|(2)]\dir{-}?(1)*[black][|(1)]\dir{>};
 (0,-6);(0,-11) **[black][|(2)]\dir{-};
 (-6,-8)*{\scriptstyle k};
 (0,-6);(0,6)*{} **[black][|(2)]\crv{(-6,-4) & (-6,4)}?(.5)*{};
 (-9,0)*{\bigb{e_j}};
 (0,-6);(0,6)*{} **\crv{(6,-4) & (6,4)}?(.5)*{\bigb{h_{k-j}}};
 (8,1)*{};
 (8,-1)*{};
 (4,-8)*{\scriptstyle k+1};
\endxy \;\;\; = \;\;
\xy
 (-6,-10);(-6,10); **[black][|(2)]\dir{-} ?(0)*[black][|(1)]\dir{<};
 (0,-10);(0,10); **[black][|(2)]\dir{-} ?(1)*[black][|(1)]\dir{>};
(-4,-8)*{\scriptstyle {k}};
 (4,-8)*{\scriptstyle {k+1}};
\endxy
\]

\end{proof}

\begin{prop}\label{4.2}
The map $f: \rib \E\onen \to \Int $
is a  chain map between bicomplexes.
 
\end{prop}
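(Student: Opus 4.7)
The plan is to verify separately that $f$ commutes with the horizontal and with the vertical differentials of the two bicomplexes. Since $f_{k,l}$ is built from the merger morphism $\E^{(k)}\E\to \E^{(k+1)}$ (a trivalent vertex in the thick calculus of \cite{KLMS}) composed with the bookkeeping operator $\beta_{k,l}$, each compatibility check amounts to tracking how the symmetric-function dots defining $d^H$ and $d^V$ propagate through this merger.

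For horizontal compatibility, the identity to verify is
\[f_{k,l+1}\circ (c\wedge -) = (c'\wedge -)\circ f_{k,l}\,,\]
where $c=\sum_{j=1}^{k} c_j\, w_j$ on the $\rib\E\onen$ side and $c'=\sum_{j=2}^{k+1}c_{j-1,1}\, w_j$ on the $\Int$ side. I would first slide the central element $c_j$, which acts on $\F^{(k)}\E^{(k)}$, past the merger: by Proposition 3.1, $c_j$ decomposes via Littlewood--Richardson coefficients into a sum of $(c_\mu)_{\F^{(k)}}\circ(c_\nu)_{\E^{(k)}\E}$, and the $\E^{(k)}\E$ factor consolidates onto the single $\E^{(k+1)}$ strand using the comultiplication identity for $c_{k,1}$ recorded just after the definition of $\Int$ in Section \ref{rib-def}. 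The extra summand $\delta_{i,1}\sum_{j=2}^{k+1} c_{j-1}\,w_j$ in $\beta_k(w_i)$ was introduced precisely to convert the $w_1$-term of $c$, which has no direct counterpart in $W'_k$, into the shifted sum of $c_{j-1,1}\,w_j$ terms that appears in $c'$.

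For vertical compatibility, the structural point is that $\alpha_{k,l}$ and $\alpha'_{k,l}$ have the same shape but are attached to thick strands of different thickness, and the cap that produces the new $(k+1)$-$(k+1)$ pair on the $\rib\E\onen$ side sits adjacent to an extra $\E$ strand, whereas on the $\Int$ side it lands directly on a $(k+2)$-strand. Using associativity of the trivalent vertices (Proposition 2.4 of \cite{KLMS}) together with the comultiplication rule for symmetric functions on thick strands (eq.~(2.67) of \cite{KLMS}), I would absorb the unmerged $\E$ strand on the source side into the thick $(k+2)$-strand on the target side, so that the difference between the two vertical differentials becomes exactly the $(-1)^{k+1-i}\, e_{k+1-i}\cdot w_{k+1}$ correction terms already built into $\beta_k(w_i)$.

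The main obstacle will be the horizontal identity: the matrix $\beta_{k,l}$ and the families $\{c_j\}$ and $\{c_{j-1,1}\}$ interact in a non-trivial combinatorial way, and one must simultaneously account for the absence of $w_1$ in $W'_k$ and for the index shift from $c_j$ to $c_{j-1,1}$. I would organize this either by induction on $l$, using the algebra-homomorphism property of $\beta_k$ to reduce to the case $l=1$, or by direct manipulation of $(l,l)$-minors of $\beta_{k,1}$; in both cases the key computational input is the comultiplication formula $\Delta(c_{k,1})=\sum_{i=0}^{k-1} c_i\otimes c_{k-i,1}+\sum_{i=1}^{k} c_i\otimes c_{k+1-i}+\sum_{i=2}^{k+1} c_{i-1,1}\otimes c_{k+1-i}$ recalled at the end of Section \ref{rib-def}.
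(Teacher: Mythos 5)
Your plan — check horizontal and vertical compatibility separately, with the horizontal square resting on the branching relation $c_{i-1,1}+(c_i)_{12}-(c_1)_{12}c_{i-1}=0$ (the concrete form of the comultiplication relation you invoke, together with the vanishing identity \eqref{d2}) and the vertical square on trivalent-vertex associativity plus the ``verify on $W_k$ and extend to $\Lambda^l W_k$'' principle — is exactly the paper's proof. The paper does not use your reduction of the horizontal check to $l=1$ via the algebra-homomorphism property of $\beta_k$, instead working at arbitrary $l$ with a case split on whether $i_1=1$, but that is an organizational difference only and the key computational inputs are the same.
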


\begin{proof}
Let us first check the commutativity of the horizontal square:
\[ \xymatrix{C'_{k,l}\ar[r]^{c'\wedge} &C'_{k,l+1}
	    	  \\
      C_{k,l}\E\ar[r]^{(c)_{12}\wedge}\ar[u]^{f_{k,l}}& C_{k,l+1}\E\ar[u]^{f_{k,l+1}}
		}
       \]
We have to show  that for any $1\leq i_1<i_2<\dots<i_l\leq k$
$$  \sum^{k+1}_{i=2} c_{i-1,1} \, w_i \wedge \beta_k(w_{i_1})
\wedge \dots \wedge \beta_k(w_{i_l})
=\sum^k_{i=1, i\neq i_j} (c_{i})_{12} \,\beta_k(w_i)\wedge \beta_k(w_{i_1})\wedge
\dots \wedge \beta_k(w_{i_l})  \;\;
\in \Dot(\F^{(k)}\E^{(k)}\E)  \Lambda^l  W_k\, . $$ 

Let us first assume $i_1>1$.
After the substitution of maps this identity can be rewritten as follows:
\begin{align*}
(-1)^{l}\sum^{k+1}_{i=2} c_{i-1,1} w_i\wedge w_{i_1}&
\wedge\dots \wedge w_{i_l} + (-1)^{l-1}\sum^l_{j=1} \sum^{k+1}_{i=2}
(-1)^{k+1-i_j} (e_{k+1-i_j})_2 c_{i-1,1} w_i\wedge\dots \hat w_{i_j}\dots\wedge w_{i_l}\wedge w_{k+1} =\\
(-1)^{l+1}\sum^k_{i=1} (c_i)_{12} w_i \wedge w_{i_1}&
\wedge\dots \wedge w_{i_l}
+(-1)^l\sum^{k+1}_{i=2} (c_1)_{12} c_{i-1} w_i \wedge w_{i_1}
\wedge\dots \wedge w_{i_l} \\& +(-1)^{l-1}\sum^l_{j=1}\sum^{k+1}_{i=2} (-1)^{k+1-i_j} (c_1)_{12} c_{i-1} (e_{k+1-i_j})_2 
w_i \wedge w_{i_1} \wedge\dots \hat w_{i_j}
\dots \wedge w_{i_l}\\ &
+ (-1)^l \sum^l_{j=0}\sum^k_{i_0=1, i_0\neq i_j} (-1)^{k+1-i_j} (c_{i_0})_{12} (e_{k+1-i_j})_2 
 w_{i_0}
\wedge\dots\hat w_{i_j} \dots\wedge w_{i_l}\wedge w_{k+1}
\end{align*}
After  accomplishing the $j=0$ summand of the last term 
 to zero by using \eqref{d2}
and
applying twice
\begin{equation} \label{horsquare}
 c_{i-1,1} +(c_i)_{12} -(c_1)_{12} c_{i-1}=0
\, \end{equation}
 it is not difficult to verify that it holds.
The case $i_1=1$ reduces to the same identities and is left to the reader.

To see that $f_{k+1,l}(d^V_{k,l}\E)=  d'^V_{k,l} f_{k,l}$ we 
 need to verify  that 
\[
\xy (-8,-10);(-8,6) **[black][|(2)]\dir{-} ?(.6)*[black][|(1)]\dir{<};
 (0,0);(0,6) **[black][|(2)]\dir{-} ?(1)*[black][|(1)]\dir{>};
 (-2,-10);(0,0)*{} **[black][|(2)]\crv{(-2,-1)} ?(.3)*[black][|(1)]\dir{>};
 (2,-10);(0,0)*{} **[black][|(1)]\crv{(2,-1)} ?(.5)*[black][|(1)]\dir{-};
 (-14,6)*{\scriptstyle k+1};
 (4,6)*{\scriptstyle k+2};
 (-8,0)*{}; (8,0)*{};
(-4,-5)*{\bigb{\;\;\;\beta_{k+1}\;\;\;}};
(-8,-10);(-8,-20); **[black][|(2)]\dir{-};
 (-2,-10);(-2,-20); **[black][|(2)]\dir{-} ?(.85)*[black][|(1)]\dir{-}?;
(2,-10);(2,-20); **[black][|(1)]\dir{-} ?(.9)*[black][|(1)]\dir{>}?;
 (-8,-9)*{};(-2,-9)*{} **[black][|(1)]\crv{(-8,-13) & (-2,-13)};
 (-2,-16)*{\bigb{\alpha_{k}}};
\endxy \quad=\quad
\xy (-8,-20);(-8,-6) **[black][|(2)]\dir{-} ?(.6)*[black][|(1)]\dir{<};
 (0,-10);(0,-6) **[black][|(2)]\dir{-} ?(.6)*[black][|(1)]\dir{-};
 (-2,-20);(0,-10)*{} **[black][|(2)]\crv{(-2,-15)} ?(.3)*[black][|(1)]\dir{>};
 (2,-20);(0,-10)*{} **[black][|(1)]\crv{(2,-15)} ?(.3)*[black][|(1)]\dir{-};
(-4,-16)*{\bigb{\;\;\;\;\beta_{k}\;\;\;\;}};
(-8,-6);(-8,6); **[black][|(2)]\dir{-};
 (0,-6);(0,6); **[black][|(2)]\dir{-} ?(0)*[black][|(1)]\dir{<}?;
 (-8,4)*{};(0,4)*{} **[black][|(1)]\crv{(-8,0) & (0,0)};
 (-14,6)*{\scriptstyle k+1};
 (4,6)*{\scriptstyle k+2};
 (0,-4)*{\bigb{\alpha'_{k}}};
\endxy \quad :
 W_k \to \Dot (\F^{(k)}\E^{(k+1)})  W'_{k+1}\]
which can be easily seen  after the substitution of maps. 
 Hence, also the induced maps
$\Lambda^l W_k \to \Dot (\F^{(k)}\E^{(k+1)}) \Lambda^l W'_{k+1}$
have to coincide.
\end{proof}

\begin{thm}\label{re}
The bicomplex $\Int$  is a strong deformation retract of $\rib\E\onen$
in $\Kom(\UcatD)$.
\end{thm}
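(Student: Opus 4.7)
The plan is to upgrade the data $(f,\bar f)$ from Propositions \ref{4.1}--\ref{4.2} to a full strong deformation retract in the sense defined in Section~2.2. What remains is to verify that $\bar f$ is also a chain map (both horizontally and vertically), and to construct a homotopy $h = h^H + h^V$ satisfying the seven identities listed after the definition. Checking that $\bar f$ commutes with $d^H$ and $d'^H$ should be handled by the same type of diagrammatic manipulation used in Proposition~\ref{4.2}: substitute the adjugate matrix $\bar\beta_{k,l}=\operatorname{Adj}(\beta_{k,l})$ in place of $\beta_{k,l}$ and use Cramer's rule together with the horizontal identity \eqref{horsquare} to reduce both sides. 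Vertical compatibility of $\bar f$ reduces to the same combinatorial identity that was verified for $f$, read in the opposite direction.

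The substantive content is the construction of $h$. The conceptual picture is that $\E^{(k)}\E$ decomposes in $\UcatD$ as $[k+1]$ copies of $\E^{(k+1)}$ (with appropriate $q$-shifts), and the idempotent $\bar f f$ projects $\F^{(k)}\E^{(k)}\E\onen$ onto one such copy---the one picked out by the normalized matrix $\beta_{k,l}$, whose determinant \eqref{detbeta} is the corresponding splitter. Thus $1-\bar f f$ is a projector onto a direct sum of complementary $\F^{(k)}\E^{(k+1)}\onen$ summands. The plan is to build $h$ so that on these complementary summands it inverts an appropriate component of $d^H + d^V$, while $h$ annihilates $\Int \subset \rib\E\onen$ via $\bar f$.

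Concretely, I would first write $\beta_{k,l}$ in block form so that the matrix $\beta_{k,1}=M(\beta_k)$ exhibits an explicit splitting $\F^{(k)}\E^{(k)}\E \otimes W_k \cong \F^{(k)}\E^{(k+1)}\otimes W'_k \oplus (\text{complement})$, and define $h^H$ on the complement by interior multiplication against the dual basis (providing an inverse to multiplication by $c$ on that complement), and zero on the summand identified with $\Int$. The map $h^V$ is defined analogously using the fact that the vertical differential $\alpha_{k,l}$ contains an identity-piece ${\bf 1}\otimes w_{i_1}\wedge \cdots \wedge w_{i_l}$ that becomes invertible modulo lower-order terms after restricting to the complementary summand. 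The side conditions $fh^H = fh^V = h^H g = h^V g = 0$ and the various idempotency/anticommutation relations for the pieces of $h$ will then follow from the explicit formulas, since both $h^H$ and $h^V$ are defined to vanish on the image of $\bar f$ and on the $(k+1)$-st basis direction by construction.

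The main obstacle will be verifying the master identity $h^Hd^H + d^Hh^H + h^Vd^V + d^Vh^V = 1 - \bar f f$ together with the separated mixed identities $h^Hd^V + d^Vh^H = 0$ and $h^Vd^H + d^Hh^V = 0$, rather than only the single total-complex identity $dh + hd = 1 - \bar f f$. I expect to achieve this by a bigraded homological perturbation argument: start with the obvious contracting homotopy on the total complex of the complementary summand, then split it by bidegree and correct iteratively using the central-element identities from Section~\ref{center}. Once the formulas for $h^H$ and $h^V$ are written down explicitly, all verifications should reduce to instances of \eqref{d21}, \eqref{d2}, \eqref{horsquare}, and the thick-calculus relations of \cite{KLMS}, together with Proposition~\ref{r2} for pushing central elements past the trivalent vertices appearing in the definitions of $f$ and $\bar f$.
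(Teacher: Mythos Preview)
Your conceptual picture---that $\bar f f$ projects onto one $\E^{(k+1)}$-summand in the decomposition of $\E^{(k)}\E$---is correct, but the plan you describe is both more work than necessary and not yet a proof.

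First, you do not need to verify separately that $\bar f$ is a chain map. By the Remark following the bicomplex definition of strong deformation retract in Section~2.2, the identities $d^H\bar f=\bar f d'^H$, $d^V\bar f=\bar f d'^V$, $h^H\bar f=0$, and $h^V\bar f=0$ all follow from the remaining conditions. So only $f\bar f=1$ (Proposition~\ref{4.1}), the chain-map property of $f$ (Proposition~\ref{4.2}), and the homotopy identities need direct verification.

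Second, and more substantively, the paper takes $h^V=0$. The entire homotopy is purely horizontal: $h^H_{k,l}$ is a single explicit diagram in which the thin $\E$-strand crosses over to the thick $\E^{(k)}$-strand and back, with an interior-product insertion $q_{k,1}(w_i)=(-1)^{i-1}e_{k-i}$. With $h^V=0$ every condition involving $h^V$ is trivial, and what remains is exactly the four items of Proposition~\ref{pro-re}: $fh^H=0$, $h^Hh^H=0$, the anticommutation $h^H(d^V\E)+(d^V\E)h^H=0$, and the master identity $h^H(d^H\E)+(d^H\E)h^H=1-\bar f f$. These are checked by direct diagrammatic computation using associativity of trivalent vertices and the sliding rules in the Appendix; no perturbation argument is needed.

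Your plan to build a nonzero $h^V$ by ``iterative correction'' might in principle be made to converge to something equivalent, but as written it is only a strategy with no explicit formulas, and it misses the key structural fact that makes this theorem easier than Theorem~\ref{Er}: the complement of $\Int$ inside $\rib\E\onen$ is contractible purely in the horizontal direction. (Contrast this with the proof of Theorem~\ref{Er}, where a nonzero vertical homotopy really is required.)
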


\section{Proof of Theorem \ref{re}}
This section is devoted to the proof of Theorem \ref{re}.
After defining the homotopies, Propositions \ref{pro-re},
\ref{4.1} and \ref{4.2}
establish all properties of the strong deformation retract.

\subsection{Homotopies}
The horizontal homotopy $h^H_{k,l}: C_{k,l}\E \to C_{k,l-1}\E$
is defined as follows:
\[
\xy
 (-8,10);(-8,-10); **[black][|(2)]\dir{-} ?(0)*[black][|(1)]\dir{<};
 (0,10);(0,-10); **[black][|(2)]\dir{-} ?(1)*[black][|(1)]\dir{>};
(8,10);(0,-6); **\crv{(8,2) } ?(1)*[black][|(1)]\dir{>};
(8,-10);(0,6); **\crv{(8,-2) } ?(.5)*[black][|(1)]\dir{<};
(-2,8)*{\scriptstyle {k}};
 (-10,8)*{\scriptstyle {k}};
(-1,0)*{\bigb{ q_{k,l}}};
\endxy
\]
where
$ q_{k,l}:\Lambda^l W_k \to \Dot(\E^{(k-1)}\onen)
  \Lambda^{l-1} W_{k}$
\[q_{k,l}(w_{i_1}\wedge \dots \wedge w_{i_l})=\sum^l_{j=1}(-1)^{j-1} 
q_{k,1}(w_{i_j})
w_{i_1}\wedge \dots \hat w_{i_j}\dots \wedge w_{i_l}
\]
with $q_{k,1}(w_i)=(-1)^{i-1}e_{k-i}$.
The vertical homotopy  $h^V_{k,l}:C_{k,l}\E$ to $C_{k-1,l}\E$ is set to be zero.

Now Theorem \ref{re} reduces to the following proposition.
\begin{prop}\label{pro-re} We have
\begin{enumerate} 
\item
$ f_{k,l-1} { h}^H_{k,l}=0$;
\item
${ h}^H_{k,l-1}   { h}^H_{k,l}=0$;
\item
$  h^H_{k+1,l} (d^V_{k,l}\E) + (d^V_{k,l-1} \E)h^H_{k,l}=0$;
\item
$ h^H_{k,l+1} (d^H_{k,l}\E)  + (d^H_{k,l-1}\E)
 h^H_{k,l} ={\bf 1} - \bar f_{k,l} f_{k,l}$.
\end{enumerate}
\end{prop}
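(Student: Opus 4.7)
My plan is to verify the four relations by expanding the compositions into diagrams and reducing them to standard relations in the thick calculus together with the Schur-function identities developed in Section \ref{center}. I would tackle the parts in increasing order of difficulty: (2) first (a combinatorial $q\wedge q=0$ argument), then (1) (a splitter-meets-cup cancellation), then (3) (anticommutativity with the vertical differential), and finally the main identity (4).

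Parts (2) and (3) are essentially formal. Since $q_{k,1}(w_i)=(-1)^{i-1}e_{k-i}$ takes values in the commutative dot algebra, $q_{k,l}$ is an odd derivation on $\Lambda^\bullet W_k$; iterating yields a double sum in which the $(j<j')$ and $(j>j')$ orderings cancel by the Koszul signs, proving (2). For (3), I would verify on generators that the graded anticommutator of $q$ with $\alpha_k$ vanishes: the only non-trivial contributions come from the new generator $w_{k+1}$ created by $\alpha$, and these are annihilated by the same bubble-type identity that proved $(d^V)^2=0$ in Section \ref{rib-def}, while the $(-1)^l$ sign in the definition of $d^V$ matches the Koszul sign in $q_{k,l}$.

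For (1), I would place the splitter from $f_{k,l-1}$ above the cap--cup in $h^H_{k,l}$ and use the associativity of trivalent vertices in the thick calculus to merge the resulting $\E^{(k+1)}$ split--merge into a dotted $\E^{(k)}\E$ configuration. The resulting dot expression is a sum of terms of the form $(-1)^{i-1}c_j\,e_{k-i}$ coming from $\beta_{k,l-1}$ and $q_{k,l}$, which cancel by an identity of the same type as \eqref{d2}, using $\sum_{j}(-1)^j e_j h_{k-j}=0$ for $k>0$.

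For (4), the heart of the proof, I would proceed in two steps. First, compute the LHS directly: using the identities $q_{k,l+1}(w_i\wedge y)=q_{k,1}(w_i)\,y-w_i\wedge q_{k,l}(y)$ and $d^H(y)=\sum_i c_i\, w_i\wedge y$, the mixed terms in $h^H d^H+d^H h^H$ cancel, leaving a contribution of $\sum_{i=1}^k(-1)^{i-1}c_i e_{k-i}$ placed by the cap--cup on the $\F^{(k)}\E^{(k)}\E$ strands, times the identity on $\Lambda^l W_k$. Second, compute $\bar f_{k,l}f_{k,l}$: using Proposition \ref{4.1} together with formula \eqref{detbeta}, the $\Lambda^l W_k$-part is multiplication by $\det(\beta_{k,1})=\sum_{j=0}^k(-1)^j(e_j)_1(h_{k-j})_2$ positioned through an $\E^{(k+1)}$ split--merge on the $\E^{(k)}\E$ strands. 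The main obstacle will be to show that, after expanding the $\E^{(k+1)}$ split--merge using the thick-calculus relations, these two dot expressions sum to the identity in $\Dot(\F^{(k)}\E^{(k)}\E\onen)$; this reduces to a Schur-coproduct identity of the type established in Proposition \ref{r2}, and the delicate sign bookkeeping of the cap--cup against the Koszul signs will be the most involved part of the argument.
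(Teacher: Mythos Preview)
Your proposal has a recurring structural gap: you treat the dot contributions as if they sit in a single commutative algebra and can be freely commuted past the diagrammatic cap--cup that defines $h^H$. They cannot.

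For part (2), after stacking two copies of $h^H$, the dots $e_{k-i_j}$ and $e_{k-i_{j'}}$ sit on \emph{different} $\E^{(k-1)}$-segments of the thick strand, separated by a merge and a split (and the thin strands cross between them). The Koszul cancellation you invoke only works once you know that swapping the two dot placements gives the same $2$-morphism; that is precisely the identity the paper isolates and proves by sliding the upper $e$ down through the trivalent vertex (picking up the correction $y=e_{s-1}e_u-e_{u-1}e_s$) and then using associativity and a Reidemeister~III move. This step is not automatic from commutativity of symmetric functions.

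For part (4), the crucial point you miss is that the horizontal differential in $r\E\onen$ is $(c_i)_{12}$ --- the central element of $\F^{(k)}\E^{(k)}\onenn{n+2}$ extended by the identity on the extra thin $\E$ --- and \emph{not} the full central element $(c_i)_{123}$ of $\End(\F^{(k)}\E^{(k)}\E\onen)$. Since the cap--cup in $h^H$ genuinely involves the third strand, $(c_i)_{12}$ does not slide past it, and your ``mixed terms cancel'' argument breaks down. In the paper's computation these off-diagonal terms survive, acquiring a dot on the thin strand via the slide; they then cancel against the residual part of $\bar f_{k,l}f_{k,l}$ left over after extracting the $\det(\beta_{k,1})$ contribution (equation \eqref{detbeta}). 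Your sketch of the RHS also understates what is needed: $\bar f_{k,l}f_{k,l}$ is not simply a split--merge carrying $\det(\beta_{k,1})$; one must separate the piece where the dots have been pushed above the split from the piece where they have not, and it is the latter that matches the off-diagonal $dh+hd$ terms.

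For part (1), recall that $\beta_k$ has two kinds of terms: the $c_{j-1}$-terms and the $(-1)^{k+1-i}(e_{k+1-i})_2$-terms. Your single Schur identity addresses only one of them. The paper accordingly reduces (1) to \emph{two} separate diagram identities (one with a single $e_{k-i}$ in the curl, one with a pair $e_{k-i},e_{k+1-j}$, antisymmetrized), each proved via associativity of trivalent vertices. Part (3) likewise reduces not to the $(d^V)^2=0$ identity but to a specific four-term relation obtained by sliding $e$'s through the cap--cup and the $d^V$-cup; your appeal to ``the same bubble-type identity'' is too vague to constitute a proof.
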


\begin{proof}
$(1)$ The proof of the first identity requires to show that for any
$1\leq i,j\leq k$, we have
\[\xy
 (0,8);(0,4); **[black][|(2)]\dir{-}?(1)*[black][|(1)]\dir{>};
(0,4);(-4,-8); **[black][|(2)]\crv{ (-4,1)}; 
?(0.8)*[black][|(1)]\dir{>} ?(0.4)*{\bullet}+(-4,-1)*{\scs e_{k-i}};
(0,4);(-4,-5); **[black][|(1)]\crv{ (8,-2)}; 
(-3,0);(4,-8); **[black][|(1)]\crv{ (4,-2)} ?(0.2)*[black][|(1)]\dir{>};
 (2,8)*{\scriptstyle k};
\endxy\;\;=0
\quad \quad\text{and}\quad\quad
\xy
 (0,8);(0,4); **[black][|(2)]\dir{-}?(1)*[black][|(1)]\dir{>};
(0,4);(-4,-8); **[black][|(2)]\crv{ (-4,1)}; 
?(0.1)*[black][|(1)]\dir{>} ?(0.4)*{\bullet}+(-4,-1)*{\scs e_{k-i}}
?(0.7)*{\bullet}+(-6,0)*{\scs e_{k+1-j}};
(0,4);(-4,-5); **[black][|(1)]\crv{ (8,-2)}; 
(-3,0);(4,-8); **[black][|(1)]\crv{ (4,-2)} ?(0.2)*[black][|(1)]\dir{>};
(2,8)*{\scriptstyle k};
\endxy\;\;-\;\;
\xy
 (0,8);(0,4); **[black][|(2)]\dir{-}?(1)*[black][|(1)]\dir{>};
(0,4);(-4,-8); **[black][|(2)]\crv{ (-4,1)}; 
?(0.1)*[black][|(1)]\dir{>} ?(0.4)*{\bullet}+(-4,-1)*{\scs e_{k-j}}
?(0.7)*{\bullet}+(-6,0)*{\scs e_{k+1-i}};;
(0,4);(-4,-5); **[black][|(1)]\crv{ (8,-2)}; 
(-3,0);(4,-8); **[black][|(1)]\crv{ (4,-2)} ?(0.2)*[black][|(1)]\dir{>};
(2,8)*{\scriptstyle k};
\endxy\;\;=0
\]
To prove the first statement we use the associativity of the trivalent vertices.
$$\xy
 (0,8);(0,4); **[black][|(2)]\dir{-}?(1)*[black][|(1)]\dir{>};
(0,4);(-4,-8); **[black][|(2)]\crv{ (-4,1)}; 
?(0.8)*[black][|(1)]\dir{>} ?(0.4)*{\bullet}+(-2,-1)*{\scs e_j};
(0,4);(-4,-5); **[black][|(1)]\crv{ (8,-2)}; 
(-3,0);(4,-8); **[black][|(1)]\crv{ (4,-2)} ?(0.2)*[black][|(1)]\dir{>};
\endxy\;\;=\;\;
\xy
 (0,8);(0,4); **[black][|(2)]\dir{-}?(1)*[black][|(1)]\dir{>};
(0,4);(-4,-8); **[black][|(2)]\crv{ (-4,1)}; 
?(0.1)*[black][|(1)]\dir{>} ?(0.4)*{\bullet}+(-2,0)*{\scs e_j};
(0,4);(-4,-5); **[black][|(1)]\crv{ (8,-2)}; 
(3,0);(4,-8); **[black][|(1)]\crv{ (-4,-2)} ?(0.2)*[black][|(1)]\dir{>};
\endxy\;\;=\;\;0
$$

The second statement works similarly. We first move  dots 
down, then cancel terms that coincide and finally use the same trick.  

(2) Here we need to show that
for all $0\leq s,u\leq k-1$ 
\[
\begin{DGCpicture}
\DGCstrand[d](0,0.2)(0,2.2)\DGCdot >{2.2}\DGCdot.{2.2}[l]{$\scs k$}
\DGCdot{1.7}[l]{$ e_s$} \DGCdot{.6}[l]{$ e_u$}
\DGCstrand(0,0.8)/ur/(1,2.2)/u/\DGCdot >{2.2}
\DGCstrand(0,1.4)/dr/(1,0.2)/d/\DGCdot <{1}
\DGCstrand(0,0.4)/r/(1,1.2)(0,2)/l/
\end{DGCpicture}
-
\begin{DGCpicture}
\DGCstrand[d](0,0.2)(0,2.2)\DGCdot >{2.2}\DGCdot.{2.2}[l]{$\scs k$}
\DGCdot{1.7}[l]{$ e_u$} \DGCdot{.6}[l]{$ e_s$}
\DGCstrand(0,0.8)/ur/(1,2.2)/u/\DGCdot >{2.2}
\DGCstrand(0,1.4)/dr/(1,0.2)/d/\DGCdot <{1}
\DGCstrand(0,0.4)/r/(1,1.2)(0,2)/l/
\end{DGCpicture}=0 \; .
\]
After sliding the upper $e_i$'s down and using the 
invariance under the third Reidemeister move, the left hand side of this
identity become
\[
\begin{DGCpicture}
\DGCstrand[d](0,.2)(0,2.2)\DGCdot >{2.2}\DGCdot.{2.2}[l]{$\scs k$}
\DGCdot{1.2}[l]{$ y$}
\DGCstrand(0,0.6)(0.8,1.1)(1,2.2)/u/\DGCdot >{2.2}
\DGCstrand(0,1.6)/r/(1,0.2)/d/\DGCdot <{0.5}\DGCdot{1.55}
\DGCstrand(0,0.4)/ur/(.5,1.25)(0,2)/ul/
\end{DGCpicture}
-
\begin{DGCpicture}
\DGCstrand[d](0,.2)(0,2.2)\DGCdot >{2.2}\DGCdot.{2.2}[l]{$\scs k$}
\DGCdot{1.2}[l]{$ y$}
\DGCstrand(0,0.6)(0.8,1.1)(1,2.2)/u/\DGCdot >{2.2}\DGCdot{0.87}
\DGCstrand(0,1.6)/r/(1,0.2)/d/\DGCdot <{0.5}
\DGCstrand(0,0.4)/ur/(.5,1.25)(0,2)/ul/
\end{DGCpicture}
\]
with $y:=e_{s-1}e_u-e_{u-1}e_s$. Using
associativity as in case (1) we can easily see
that it is zero.

(3) The third equation follows
from the following identity:

\[
\begin{DGCpicture}
\DGCstrand[d](0,0)(0,2)\DGCdot <{0}\DGCdot.{2}[l]{$\scs k$}
\DGCstrand[d](.7,0)(.7,2)\DGCdot >{2}\DGCdot.{2}[l]{$\scs k$}
\DGCdot{1}[l]{$ e_s$} \DGCdot{.3}[l]{$ e_u$}
\DGCstrand(0,0.3)(.7,.7)(1.4,2)\DGCdot >{2}
\DGCstrand(.7,1.5)/dr/(1.4,0)/d/\DGCdot <{0.5}
\end{DGCpicture}
-
\begin{DGCpicture}
\DGCstrand[d](0,0)(0,2)\DGCdot <{0}\DGCdot.{2}[l]{$\scs k$}
\DGCstrand[d](.7,0)(.7,2)\DGCdot >{2}\DGCdot.{2}[l]{$\scs k$}
\DGCdot{1}[l]{$ e_u$} \DGCdot{.3}[l]{$ e_s$}
\DGCstrand(0,0.3)(.7,.7)(1.4,2)\DGCdot >{2}
\DGCstrand(.7,1.5)/dr/(1.4,0)/d/\DGCdot <{0.5}
\end{DGCpicture}
=
\begin{DGCpicture}
\DGCstrand[d](0,0)(0,2)\DGCdot <{0} \DGCdot.{2}[l]{$\scs k$}
\DGCstrand[d](1,0)(1,2)\DGCdot >{2}\DGCdot.{2}[l]{$\scs k$}
\DGCdot{.7}[dll]{$ e_{s-1}$} \DGCdot{1.3}[dll]{$ e_u$}
\DGCstrand(0,1.7)/dr/(1,1.7)/ur/
\DGCstrand(1,1.1)/dr/(2,0)/d/\DGCdot <{0.5}
\DGCstrand(1,.3)/r/(2,2)\DGCdot >{2}
\end{DGCpicture}
-
\begin{DGCpicture}
\DGCstrand[d](0,0)(0,2)\DGCdot <{0} \DGCdot.{2}[l]{$\scs k$}
\DGCstrand[d](1,0)(1,2)\DGCdot >{2}\DGCdot.{2}[l]{$\scs k$}
\DGCdot{.7}[dll]{$ e_{u-1}$} \DGCdot{1.3}[dll]{$ e_s$}
\DGCstrand(0,1.7)/dr/(1,1.7)/ur/
\DGCstrand(1,1.1)/dr/(2,0)/d/\DGCdot <{0.5}
\DGCstrand(1,.3)/r/(2,2)\DGCdot >{2}
\end{DGCpicture}
\]
which holds for all $0\leq s,u\leq k$ (here $e_{-1}$ are assumed to be zero).
 This identity is easy to prove by using the sliding
rules from  Appendix.

(4) Let us compute all terms of this equation. The ``$dh$''-part
contains the diagonal  term of the form
\begin{equation}\label{ABterms}
\sum^k_{s=1} (-1)^{s-1}
\begin{DGCpicture}
\DGCstrand[d](0,0)(0,2)\DGCdot <{0} \DGCdot.{2}[l]{$\scriptstyle k$}
\DGCstrand[d](.7,0)(.7,2)\DGCdot >{2}\DGCdot.{2}[l]{$\scriptstyle k$}
\DGCdot{1.2}[dl]{$\scriptstyle e_{k-s}$}
\DGCstrand(.7,1.6)/r/(1.4,0)/d/\DGCdot <{0.5}
\DGCstrand(.7,.9)/r/(1.4,2)\DGCdot >{2}
\DGCcoupon(-.1,.6)(.8,.2){$ c_s$}
\end{DGCpicture} =
(-1)^{k-1}
\begin{DGCpicture}
\DGCstrand[d](0,0)(0,2)\DGCdot <{0} \DGCdot.{2}[l]{$\scriptstyle k$}
\DGCstrand[d](.7,0)(.7,2)\DGCdot >{2}\DGCdot.{2}[l]{$\scriptstyle k$}
\DGCstrand(.7,1.6)/r/(1.4,0)/d/\DGCdot <{0.5}
\DGCstrand(.7,.4)/ur/(1.4,2)\DGCdot >{2}
\DGCcoupon(-.2,1.2)(1.1,.65){$ (c_k)_{13}$}
\end{DGCpicture}
=
\begin{DGCpicture}
\DGCstrand[d](0,0)(0,2)\DGCdot <{0} \DGCdot.{2}[l]{$\scriptstyle k$}
\DGCstrand[d](.6,0)(.6,2)\DGCdot >{2}\DGCdot.{2}[l]{$\scriptstyle k$}
\DGCstrand(1.2,0)(1.2,2)\DGCdot >{2}
\end{DGCpicture} +
(-1)^{k-1}
\begin{DGCpicture}
\DGCstrand[d](0,0)(0,2)\DGCdot <{0} \DGCdot.{2}[l]{$\scriptstyle k$}
\DGCstrand[d](.7,0)(.7,2)\DGCdot >{2}\DGCdot.{2}[l]{$\scriptstyle k$}
\DGCstrand(.7,1.1)/r/(1.4,0)/d/\DGCdot <{.5}
\DGCstrand(.7,.4)/r/(1.4,2)\DGCdot >{2}
\DGCcoupon(-.2,1.3)(1.6,1.75){$ (c_k)_{13}$}
\end{DGCpicture} \, .
\end{equation}

The off-diagonal terms of ``$dh$'' and ``$hd$'' are 
\[\begin{DGCpicture}
\DGCstrand[d](0,0)(0,2)\DGCdot <{0} \DGCdot.{2}[l]{$\scs k$}
\DGCstrand[d](.8,0)(.8,2)\DGCdot >{2}\DGCdot.{2}[l]{$\scs k$}
\DGCdot{1.2}[dll]{$\scs e_{k-i}$}
\DGCstrand(.8,1.6)/r/(1.6,0)/d/\DGCdot <{0.5}
\DGCstrand(.8,.9)/r/(1.6,2)\DGCdot >{2}
\DGCcoupon(-.1,.7)(.9,.3){$ c_s$}
\end{DGCpicture}-
\begin{DGCpicture}
\DGCstrand[d](0,0)(0,2)\DGCdot <{0} \DGCdot.{2}[l]{$\scs k$}
\DGCstrand[d](.8,0)(.8,2)\DGCdot >{2}\DGCdot.{2}[l]{$\scs k$}
\DGCdot{.7}[dll]{$\scs e_{k-i}$}
\DGCstrand(.8,1.1)/r/(1.6,0)/d/\DGCdot <{0.5}
\DGCstrand(.8,.3)/r/(1.6,2)\DGCdot >{2} 
\DGCcoupon(-.1,1.3)(.9,1.7){$ c_{s}$}
\end{DGCpicture}
=
\begin{DGCpicture}
\DGCstrand[d](0,0)(0,2)\DGCdot <{0} \DGCdot.{2}[l]{$\scs k$}
\DGCstrand[d](.8,0)(.8,2)\DGCdot >{2}\DGCdot.{2}[l]{$\scs k$}
\DGCdot{1.2}[dll]{$\scs e_{k-i}$}
\DGCstrand(.8,1.6)/r/(1.6,0)/d/\DGCdot <{0.5}\DGCdot{0.8}
\DGCstrand(.8,.9)/r/(1.6,2)\DGCdot >{2}
\DGCcoupon(-.1,.65)(1.7,.2){$c_{s-1}$}
\end{DGCpicture}-
\begin{DGCpicture}
\DGCstrand[d](0,0)(0,2)\DGCdot <{0} \DGCdot.{2}[l]{$\scs k$}
\DGCstrand[d](.8,0)(.8,2)\DGCdot >{2}\DGCdot.{2}[l]{$\scs k$}
\DGCdot{.7}[dll]{$\scs e_{k-i}$}
\DGCstrand(.8,1.1)/r/(1.6,0)/d/\DGCdot <{0.5}
\DGCstrand(.8,.3)/r/(1.6,2)\DGCdot >{2} \DGCdot{1}
\DGCcoupon(-.1,1.3)(1.7,1.7){$ c_{s-1}$}
\end{DGCpicture}
\]

Finally, from the chain maps we get
\begin{equation}\label{maps}
\begin{DGCpicture}
\DGCstrand[d](0,-0.5)(0,2)\DGCdot <{0} \DGCdot.{2}[l]{$\scs k$}
\DGCstrand[d](.8,-.5)(.8,2)\DGCdot >{2}\DGCdot.{2}[l]{$\scs k$}
\DGCstrand(.8,1)/r/(1.6,-.5)/d/\DGCdot <{0.5}
\DGCstrand(.8,.4)/r/(1.6,2)\DGCdot >{2} 
\DGCcoupon(-.1,1.2)(1.7,1.7){$ \bar \beta_{k,l}$}
\DGCcoupon(-.1,-.3)(1.7,0.2){$  \beta_{k,l}$}
\end{DGCpicture}=
\begin{DGCpicture}
\DGCstrand[d](0,-0.5)(0,2)\DGCdot <{0} \DGCdot.{2}[l]{$\scs k$}
\DGCstrand[d](1,-.5)(1,2)\DGCdot >{2}\DGCdot.{2}[l]{$\scs k$}
\DGCstrand(1,1.1)/r/(2,-.5)/d/\DGCdot <{0.5}
\DGCstrand(1,.3)/r/(2,2)\DGCdot >{2} 
\DGCcoupon(-.1,1.3)(2.1,1.8){$ \bar \beta_{k,l} \circ \beta_{k,l}$}
\end{DGCpicture}
+\begin{DGCpicture}
\DGCstrand[d](0,-0.5)(0,2)\DGCdot <{0} \DGCdot.{2}[l]{$\scs k$}
\DGCstrand[d](.8,-.5)(.8,2)\DGCdot >{2}\DGCdot.{2}[l]{$\scs k$}
\DGCstrand(.8,1)/r/(1.6,-.5)/d/\DGCdot <{0.5}
\DGCstrand(.8,.4)/r/(1.6,2)\DGCdot >{2} 
\DGCcoupon(-.1,1.2)(1.7,1.7){$ \bar \beta_{k,l}$}
\DGCcoupon(-.1,-.3)(1.7,0.2){$  \beta_{k,l}$}
\end{DGCpicture} -
\begin{DGCpicture}
\DGCstrand[d](0,-0.5)(0,2)\DGCdot <{0} \DGCdot.{2}[l]{$\scs k$}
\DGCstrand[d](1,-.5)(1,2)\DGCdot >{2}\DGCdot.{2}[l]{$\scs k$}
\DGCstrand(1,1.1)/r/(2,-.5)/d/\DGCdot <{0.5}
\DGCstrand(1,.3)/r/(2,2)\DGCdot >{2} 
\DGCcoupon(-.1,1.3)(2.1,1.8){$ \bar \beta_{k,l} \circ \beta_{k,l}$}
\end{DGCpicture}\quad .
\end{equation}
Due to  \eqref{detbeta}, the first term in \eqref{maps}
cancels with the last term 
in \eqref{ABterms}
and the remaining terms cancel with the off-diagonal contributions
from ``$dh$'' and ``$hd$''.

Further details are left to the reader.

\end{proof}

\section{Chain maps between  $\E\rib\onen$ and $\Int$}
Let us denote by $\E C_{k,l}$ the ``chain groups'' of the complex
$\E\rib \onen$, obtained by composing $\E\onen$ with $\rib\onen$. 
These groups are
$\E\F^{(k)}\E^{(k)}\onen \otimes \Lambda^l W_k \la -kn-k\ra$ 
with the total degree shift $-\frac{n^2}{2}-3n-4$. 
The differentials are those of $\rib\onen$ extended by identity on
$\E\onen$.
The Euler 
characteristic of this complex is given by \eqref{Euler}.

Define  an algebra homomorphism
$$\gamma_k: W_k \to
\Dot (\E\F^{(k-1)}\E^{(k-1)}\onen)
 \otimes  W'_{k-1}$$
by $\gamma_k(w_1)=-\sum^k_{j=2} c_{j-1} w_j$ and
 $\gamma_k(w_i)=w_i$ for $2\leq i\leq k$. This extends to
$$\gamma_k: \Lambda^l W_k \to
\Dot (\E\F^{(k-1)}\E^{(k-1)}\onen)
 \otimes  \Lambda^l W'_{k-1}\, $$ defined by
$\gamma_k (w_{i_1}\wedge \dots\wedge w_{i_l})= \gamma_k(w_{i_1})\wedge \dots
\wedge \gamma_k
(w_{i_l})$.
We will also need another map
$a_k: W_k \to \Dot (\E\F^{(k-1)}\E^{(k-1)}\onen)$ 
with
$$a_k(w_i)=(-1)^{k-i} (e_{k-i})_3 =(-1)^{k-i} \;\;
\xy
 (-4,6);(-4,-6); **[black][|(2)]\dir{-} ?(0)*[black][|(1)]\dir{<};
 (4,6);(4,-6); **[black][|(2)]\dir{-} ?(1)*[black][|(1)]\dir{>};
(-10,6);(-10,-6); **[black][|(1)]\dir{-} ?(1)*[black][|(1)]\dir{>};
(-1,6)*{\scriptstyle {k-1}};
 (7,6)*{\scriptstyle {k-1}};
(4,0)*{\bigb{e_{k-i}}};
\endxy $$
Let  $\gamma_{k,l}:\Lambda^\bullet W_k
\to \Dot (\E\F^{(k-1)}\E^{(k-1)}\onen)
\otimes \Lambda^{\bullet -1}W'_{k}$ 
 be the derivation along the homomorphism
$\gamma_k$ induced by $a_k$, i.e.  $\gamma_{k,0}(1)=0$ and
for 
$l\geq 0$ we have
$$\gamma_{k,l}(w_{i_1}\wedge \dots \wedge w_{i_l})=
\sum^{l}_{j=1}(-1)^{j-1}
a_k(w_{i_j}) \otimes \gamma_{k}
(w_{i_1}\wedge \dots\hat w_{i_j}\dots \wedge w_{i_l})$$
Then we define the map 
\[ g_{k,l}:=
\xy
 (-4,10);(-4,-14); **[black][|(2)]\dir{-} ?(0)*[black][|(1)]\dir{<};
 (4,10);(4,-14); **[black][|(2)]\dir{-} ?(1)*[black][|(1)]\dir{>};
(-8,8)*{\scriptstyle {k-1}};
 (1,8)*{\scriptstyle {k}};
(-6,-12)*{\scriptstyle {k}};
(2,-12)*{\scriptstyle {k}};
(-12,-14);(4,7)*{} **\crv{(-12,3) & (4,3)}?(.1)*\dir{>};
(-4,-10);(4,-10)*{} **\crv{(-4,-6) & (4,-6)}?(.5)*\dir{<};
(-2,-3)*{\bigb{\;\quad\gamma_{k,l}\quad \;}};
\endxy\quad\;\; : \E C_{k,l} \to C'_{k-1, l-1}
\]
and set  $g=\oplus_{k,l} g_{k,l}:\E\rib\onen \to \Int$.
The inverse map 
$p=\oplus_{k,l} p_{k,l}:  \Int\to \E\rib\onen$ 
is defined as follows:
\[ p_{k,l}:= 
\xy
 (-4,10);(-4,-10); **[black][|(2)]\dir{-} ?(0)*[black][|(1)]\dir{<};
 (4,10);(4,-10); **[black][|(2)]\dir{-} ?(1)*[black][|(1)]\dir{>};
(-12,10);(-12,6); **[black][|(1)]\dir{-} ?(1)*[black][|(1)]\dir{>};
(-6,8)*{\scriptstyle {k}};
 (1,8)*{\scriptstyle {k}};
(-8,-8)*{\scriptstyle {k-1}};
(2,-8)*{\scriptstyle {k}};
(-12,6);(-4,6)*{} **\crv{(-12,2) & (-4,2)}?(.1)*{};
(0,-3)*{\bigb{\;\; b_{k,l}\;\;}};
\endxy \quad \;\; : C'_{k-1,l-1}\to \E C_{k,l}
\]
where
$b_{k,l}(w_{i_2}\wedge \dots w_{i_l})= (-1)^{k-1} \sum^k_{i=1} c_{i-1}
\otimes  w_i\wedge w_{i_2}\wedge \dots \wedge w_{i_l}$.

\begin{thm}\label{Er}
The bicomplex $\Int$ is a strong deformation retract of
 the bicomplex $\E\rib\onen$ 
in $\Kom(\UcatD)$.
\end{thm}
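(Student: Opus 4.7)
The plan is to mirror the proof of Theorem \ref{re}, with all diagrams reflected left-to-right to account for the fact that the extra $\E$ now sits on the left of $\rib\onen$ rather than on the right. Three ingredients need to be established: that $g$ and $p$ are bicomplex chain maps, that $gp = \mathbf{1}_{\Int}$, and that the retraction is witnessed by explicit horizontal and vertical homotopies satisfying the SDR orthogonalities.

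First I would verify the chain map properties. The horizontal compatibility $g_{k,l+1} (\E d^H) = d'^H g_{k,l}$ reduces, after expanding the definitions of $\gamma_k$ and $a_k$, to the Schur comultiplication identity $c_{i-1,1} + (c_i)_{12} - (c_1)_{12} c_{i-1} = 0$ that was used in the proof of Proposition \ref{4.2} (and is a special case of Proposition \ref{r2}). The vertical compatibility follows from the associativity of trivalent vertices and the sliding rules in the thick calculus, exactly as in the proof that $(d^V)^2 = 0$. The identity $gp = \mathbf{1}_{\Int}$ then becomes a local computation in which the cap of $g_{k,l}$ cancels the curl of $p_{k,l}$: the prefactor $(-1)^{k-1}\sum c_{i-1}\otimes w_i$ in $b_{k,l}$ is designed precisely so that after $\gamma_k$ substitutes $w_1\mapsto -\sum_{j\ge 2}c_{j-1}w_j$ and $a_k$ produces the bubble evaluation $(-1)^{k-i}e_{k-i}$, one recovers the identity on $C'_{k-1,l-1}$, with off-diagonal contributions vanishing by antisymmetry in $\Lambda^{l-1}W'_{k-1}$.

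Finally I would set the vertical homotopy to zero and define the horizontal homotopy $h^H_{k,l}\colon \E C_{k,l}\to \E C_{k,l-1}$ as the left-right reflection of the map from Section 5, placing the curl on the leftmost $\E$ strand and replacing $q_{k,l}$ by its natural left-handed analogue. The main obstacle will be the analogue of Proposition \ref{pro-re}(4), namely $h^H (\E d^H) + (\E d^H) h^H = \mathbf{1} - pg$. As in equations \eqref{ABterms}--\eqref{maps}, this amounts to a finely balanced cancellation between a diagonal term coming from the central elements $c_s$ colliding with the curl and off-diagonal terms arising from the rewriting $\gamma_k(w_1) = -\sum c_{j-1}w_j$; the combinatorics is controlled by the determinantal identity \eqref{detbeta}. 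The remaining relations $h^H h^H = 0$, $g h^H = 0$, $h^H p = 0$, and compatibility with $d^V$ follow by the same associativity-of-trivalent-vertices arguments used for Proposition \ref{pro-re}(1)--(3).
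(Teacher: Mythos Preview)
Your proposal has a genuine gap: you cannot take the vertical homotopy to be zero here. The situation is \emph{not} a left-right mirror of Theorem~\ref{re}. In $r\E\onen$ the extra $\E$ sits adjacent to the $\E^{(k)}$ factor of $\F^{(k)}\E^{(k)}$, so the retraction map $f_{k,l}\colon C_{k,l}\E\to C'_{k,l}$ is a bidegree-preserving merge and a purely horizontal homotopy suffices. In $\E r\onen$, by contrast, the extra $\E$ is separated from $\E^{(k)}$ by $\F^{(k)}$; the map $g_{k,l}\colon \E C_{k,l}\to C'_{k-1,l-1}$ must drag the $\E$-strand across $\F^{(k)}$ and close off a cap, shifting the bidegree by $(-1,-1)$. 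Consequently the defect $\mathbf 1-p_{k,l}g_{k,l}$ has a genuine vertical component, and the paper is forced to construct a nontrivial vertical homotopy
\[
h^V_{k,l}\colon \E C_{k,l}\to \E C_{k-1,l}
\]
(built from an explicit map $d_{k,l}$ involving $(c_{k-1})_{12}$ and $(e_{k-i})_3$), and to verify the full identity
\[
{\rm h}^H(\E d^H)+h^V(\E d^V)+(\E d^H){\rm h}^H+(\E d^V)h^V=\mathbf 1-pg
\]
together with the additional orthogonalities $g\,h^V=0$, $h^V p=0$, $(h^V)^2=0$, ${\rm h}^H h^V+h^V{\rm h}^H=0$, and $h^V(\E d^H)+(\E d^H)h^V=0$. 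Your claimed identity $h^H(\E d^H)+(\E d^H)h^H=\mathbf 1-pg$ is simply false; the two missing vertical terms are what account for the difference, and their verification relies on Lemmas~\ref{lemma1} and~\ref{lemma2} from the Appendix rather than on \eqref{detbeta}.

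More broadly, ``reflect left-to-right'' is the 2-functor $\sigma$, which sends $n\mapsto -n$ and reverses 1-morphism composition; applying $\sigma$ (or $\sigma\omega$) to $r\E\onen$ produces a complex of the form $\F r$, not $\E r$. So there is no symmetry of $\UcatD$ that converts the proof of Theorem~\ref{re} into a proof of Theorem~\ref{Er}; the latter requires a separate, and more intricate, construction.
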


\section{Proof of Theorem \ref{Er}}
 We will split the proof of Theorem \ref{Er}
into lemmas and prove them separately.

\begin{lem} $g: \E r\onen\to \Int$ is a chain map between bicomplexes, i.e.
\begin{align*}
d'^H_{k-1,l-1} g_{k,l} &=  g_{k,l+1} (\E d^H_{k,l})\\
d'^V_{k-1,l-1} g_{k,l} &=  f_{k+1,l} (\E d^V_{k,l})\, .
\end{align*}
\end{lem}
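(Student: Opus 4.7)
The plan is to verify the two chain-map identities separately, reducing each to a symmetric-function identity in $\Dot(\F^{(k-1)}\E^{(k)}\onen)$ (for the horizontal square) and in $\Dot(\F^{(k)}\E^{(k+1)}\onen)$ (for the vertical square), tensored with the appropriate exterior powers. The diagram defining $g_{k,l}$ factors conceptually into two parts: a topological cap/cup stage that moves the extra $\E$-strand across $\F^{(k)}$, and an algebraic $\gamma_{k,l}$-stage; these separate cleanly in both identities, so the computation reduces to dot-and-wedge bookkeeping.

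For the horizontal identity $d'^H_{k-1,l-1} g_{k,l} = g_{k,l+1}(\E d^H_{k,l})$, I would expand each side as a sum indexed by a ``new'' wedge index $i_0$ (coming from $c=\sum c_j w_j$ on the RHS and $c'=\sum c_{j-1,1} w_j$ on the LHS) together with an index $i_j$ on which $a_k$ acts. The two sides see central elements of two different flavors: $c_{i_0-1,1}$ on $\F^{(k-1)}\E^{(k)}\onen$ for the LHS and $c_{i_0}$ on $\F^{(k)}\E^{(k)}\onen$ for the RHS. The bridge between them is the identity \eqref{horsquare}, $c_{i-1,1} = (c_1)_{12}c_{i-1}-(c_i)_{12}$. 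The special value $\gamma_k(w_1)=-\sum_{m\ge 2} c_{m-1} w_m$ is calibrated precisely so that when $a_k$ hits $w_1$ inside $g_{k,l+1}$, the resulting $(c_1)_{12}c_{i_0-1}$ term supplies exactly the correction needed to convert an $(c_{i_0})_{12}$-contribution on the RHS into a $c_{i_0-1,1}$-contribution matching the LHS.

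For the vertical identity $d'^V_{k-1,l-1} g_{k,l} = g_{k+1,l}(\E d^V_{k,l})$ (I read the ``$f_{k+1,l}$'' in the statement as a typographical slip for $g_{k+1,l}$, since $f$ is defined on $\rib\E\onen$ and cannot be composed with $\E d^V_{k,l}$), the approach is to use associativity of trivalent vertices in the thick calculus (\cite{KLMS}, Proposition~2.4) to slide the cap inside $g_{k,l}$ past the cap in the vertical differential. The identity then reduces to a compatibility of the form $\gamma_{k+1}\circ\alpha_{k,l}=\alpha'_{k-1,l-1}\circ\gamma_k$ plus dot-bubble corrections controlled by the comultiplication of the elementary symmetric functions, exactly as in the proof that $\rib\onen$ is a bicomplex. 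This mirrors the role played by $\beta_{k}$ in the analogous compatibility established in Proposition~\ref{4.2}.

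The main technical obstacle is the horizontal identity in the case $1\in\{i_1,\dots,i_l\}$, since $\gamma_k(w_1)$ then contributes a nontrivial sum of horizontal terms that must be balanced against the $c_{i_0-1,1}$-output of $d'^H$. I would split the argument into the cases $1\in I$ and $1\notin I$, move all dots onto the $\F^{(k-1)}\E^{(k)}$-part using centrality (Proposition~\ref{r2}), and apply \eqref{horsquare} index by index. The sign bookkeeping, arising from the derivation nature of $\gamma_{k,l}$, the factor $(-1)^{k-i}$ in $a_k$, and the $(-1)^l$ in $d^V$, is calibrated by design, but verifying this uniformly—especially the cancellations when $w_1$ and $w_{k+1}$ interact—is where the bulk of the computation lies.
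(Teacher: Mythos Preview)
Your proposal is essentially correct and follows the same route as the paper. You are right that ``$f_{k+1,l}$'' is a slip for ``$g_{k+1,l}$''; the paper's own proof draws the vertical square with $g_{k+1,l}$ on the bottom row. For the horizontal square the paper does exactly what you outline: expand both sides, split into the cases $i_1\neq 1$ and $i_1=1$, and reduce everything to the identity $c_{t-1,1}+(c_t)_{23}-(c_1)_{23}c_{t-1}=0$ (your \eqref{horsquare}, with subscripts shifted to $23$ since here the ambient object is $\E\F^{(k)}\E^{(k)}$), together with \eqref{d2} to kill the residual $j=0$ term. For the vertical square the paper does not phrase it as a compatibility $\gamma_{k+1}\circ\alpha_{k,l}=\alpha'_{k-1,l-1}\circ\gamma_k$ but instead reduces directly to a single concrete diagrammatic identity relating two cap configurations decorated by $e_j$ and $e_{j-1}$; this is the same content as your sliding argument, just packaged as one displayed equation rather than as an abstract intertwining statement.
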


\begin{proof}
Let us check the commutativity of a general horizontal square:
\[ \xymatrix{C'_{k-1,l-1}\ar[r]^{c'\wedge} &C'_{k-1,l}
	    	  \\
     \E C_{k,l}\ar[r]^{(c)_{23}\wedge}\ar[u]^{g_{k,l}}&\E C_{k,l+1}\ar[u]^{g_{k,l+1}}
		}
       \]

We assume $k\geq 2$, otherwise there is nothing to check.
We start with $$\E\F^{(k)}\E^{(k)}  w_{i_1}\wedge \dots \wedge w_{i_l} \in
\E C_{k,l}$$ and first apply the map $g_{k,l}$ followed by  $d'^H_{k-1,l-1}$.
Then we get
\begin{equation}\label{111a}
  \sum^k_{t=2} \sum^l_{j=1} (-1)^{j-1+k-i_j} c_{t-1,1} (e_{k-i_j})_3
w_t \wedge \gamma_k(w_{i_1}\wedge \dots\hat w_{i_j}\dots \wedge w_{i_l}
)\, .
\end{equation}
Applying first the differential and then the map, we get
\begin{equation}\label{222a}
  \sum^l_{j=0}\sum^k_{i_0=1}
 (-1)^{j+k-i_j} (c_{i_0})_{23} (e_{k-i_j})_3
 \gamma_k(w_{i_0}\wedge w_{i_1}\wedge \dots\hat w_{i_j}\dots \wedge w_{i_l}
)\, .
\end{equation}
We claim that these expressions are equal in
$\Dot(\F^{(k-1)}\E^{(k)}\onen)\Lambda^l W'_{k-1}$. Indeed,
assume $1\neq i_1< i_2<\dots < i_l$ and $t\neq i_s$ for all $1\leq s \leq l$.
Then collecting the coefficients in front of
 $w_t \wedge w_{i_1}\wedge \dots\hat w_{i_j}\dots \wedge w_{i_l}$
with $j>1$ in the both formulas we get 
$(-1)^{j+k-i_j} (e_{k-i_j})_3$ times 
\begin{equation}\label{ttt}
 c_{t-1,1} +(c_t)_{23} -(c_1)_{23} c_{t-1}=0
\end{equation}
where the last term comes from setting $i_0=1$ and picking
the $t$-th summand in $\gamma_k(w_1)=-
\sum^k_{j=2}c_{j-1}w_j$.

Allowing $i_1=1$, but $i_j\neq 1$, leads to the same identity. 

Let us consider the case $i_j=1$. Collecting the coefficients  of
$w_t \wedge w_{i_2}\wedge \dots \wedge w_{i_l}$ in \eqref{111a} we get
$$ (-1)^{k-1} c_{t-1,1} (e_{k-1})_3 +\sum^l_{j=2} (-1)^{k-i_j+1} 
(e_{k-i_j})_3 \left( c_{t-1,1} c_{i_j-1} - c_{t-1} c_{i_j-1, 1}\right)
$$
and in \eqref{222a}
$$(-1)^k (c_t)_{23} (e_{k-1})_{3} -\sum^l_{j=2} (-1)^{k-i_j+1}
(e_{k-i_j})_3 (c_t)_{23} c_{i_j-1} -\sum_{p\neq 1, i_2, \dots, i_l} (-1)^{k-p} (e_{k-p})_3 (c_p)_{23} c_{t-1}\, .
$$
Using \eqref{ttt} few times, we can reduce the claim to \eqref{d2}.

Let us consider the following vertical square
\[ \xymatrix{\E C_{k,l}\ar[r]^{g_{k,l}}\ar[d]^{\E d^V_{k,l}} 
&C'_{k-1,l-1}\ar[d]^{d'^V_{k-1,l-1}}\\
    \E C_{k+1,l}\ar[r]^{g_{k+1,l}}& C'_{k,l-1}		}
       \]
Similar considerations as before lead in all cases to the following true
identity:
$$ 
\xy
 (-4,14);(-4,-10); **[black][|(2)]\dir{-} ?(0)*[black][|(1)]\dir{<};
 (4,14);(4,-10); **[black][|(2)]\dir{-} ?(1)*[black][|(1)]\dir{>};
(-4,-2);(4,-2)*{} **\crv{(-4,2) & (4,2)}?(.5)*\dir{<};
(-4,-5);(4,-5)*{} **\crv{(-4,-9) & (4,-9)}?(.5)*\dir{>};
(-10,-10);(4,12)*{} **\crv{(-10,6) & (0,6)}?(.2)*\dir{>};
(-6,14)*{\scriptstyle {k}};
(6,14)*{\scriptstyle {k}};
(4,6)*{\bigb{ e_j}};
\endxy \;\;-\;\; \xy
 (-4,14);(-4,-10); **[black][|(2)]\dir{-} ?(0)*[black][|(1)]\dir{<};
 (4,14);(4,-10); **[black][|(2)]\dir{-} ?(1)*[black][|(1)]\dir{>};
(-4,2);(4,2)*{} **\crv{(-4,6) & (4,6)}?(.5)*\dir{<};
(-4,-1);(4,-1)*{} **\crv{(-4,-5) & (4,-5)}?(.5)*\dir{>};
(-10,-10);(4,12)*{} **\crv{(-10,6) & (0,6)}?(.2)*\dir{>};
(-6,14)*{\scriptstyle {k}};
(6,14)*{\scriptstyle {k}};
(4,-7)*{\bigb{ e_j}};
\endxy
\;=\;
\xy
 (-4,14);(-4,-10); **[black][|(2)]\dir{-} ?(0)*[black][|(1)]\dir{<};
 (4,14);(4,-10); **[black][|(2)]\dir{-} ?(1)*[black][|(1)]\dir{>};
(-4,12);(4,12)*{} **\crv{(-4,8) & (4,8)}?(.5)*\dir{>};
(-4,-8);(4,-8)*{} **\crv{(-4,-4) & (4,-4)}?(.5)*\dir{<};
(-10,-10);(4,8)*{} **\crv{(-10,4) & (0,4)}?(.2)*\dir{>};
(-6,14)*{\scriptstyle {k}};
(6,14)*{\scriptstyle {k}};
(4,1)*{\bigb{ e_{j-1}}};
\endxy
$$
\end{proof}

\begin{lem}
We have $gp=1\in \End(C'_{k-1,l-1})$.
\end{lem}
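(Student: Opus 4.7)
The plan is to evaluate $g_{k,l}p_{k,l}$ directly on a basis element $x=w_{i_2}\wedge\dots\wedge w_{i_l}$ of $\Lambda^{l-1}W'_{k-1}$ (with $2\leq i_2<\dots<i_l\leq k$) and show the result equals $x$ tensored with the identity $2$-morphism of $\F^{(k-1)}\E^{(k)}\onen$. The composed diagram of $g_{k,l}p_{k,l}$ has a thin $\E$ strand created by the cup of $p_{k,l}$ and absorbed by the cap of $g_{k,l}$ into the $\E^{(k)}$ side, a matching split/merge on the $\F$ side that replaces $\F^{(k)}$ by $\F^{(k-1)}$ across the insertion, and the algebraic boxes $b_{k,l}$ (below) and $\gamma_{k,l}$ (above) sandwiched in between.

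First I would simplify the topological part of $g_{k,l}p_{k,l}$. The thin-strand cap--cup move (Reidemeister~II) together with associativity of the thick trivalent vertex (Proposition~2.4 of \cite{KLMS}) reduces the underlying diagram to the identity of $\F^{(k-1)}\E^{(k)}\onen$ decorated with a residual collection of dot insertions on the interior strand. These residual dots are then combined with the algebraic factors via the comultiplication rule of Proposition~\ref{r2}, which is the mechanism that will produce the final scalar.

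Next I would evaluate $\gamma_{k,l}\circ b_{k,l}$ on $x$. Since
\[b_{k,l}(x)=(-1)^{k-1}\sum_{i=1}^{k}c_{i-1}\cdot w_i\wedge x,\]
the Leibniz-style formula for $\gamma_{k,l}$ splits into a diagonal contribution in which $\gamma_{k,l}$ removes exactly $w_i$ (producing $a_k(w_i)=(-1)^{k-i}(e_{k-i})_3$ and $\gamma_k(x)=x$ since $x\in\Lambda^{l-1}W'_{k-1}$) and off-diagonal contributions in which $\gamma_{k,l}$ removes some $w_{i_s}$ with $s\geq 2$. The case $i=1$ is special because $\gamma_k(w_1)=-\sum_{j=2}^{k}c_{j-1}w_j$ is nontrivial; its contribution to the off-diagonal terms, after reindexing the inner sum, exactly cancels the off-diagonal contributions from $i\geq 2$ by the identity \eqref{horsquare} already used in the proof of Proposition~\ref{4.2}.

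After these cancellations only the diagonal contributions survive. Combined with the topological normalization of the first step, the statement $g_{k,l}p_{k,l}=\mathbf{1}$ reduces to a scalar identity of the form
\[(-1)^{k-1}\sum_{i=1}^{k}(-1)^{k-i}\,c_{i-1}\cdot e_{k-i}=\mathbf{1}\]
acting on the two strands of $\F^{(k-1)}\E^{(k)}\onen$, which follows from the standard symmetric-function identity $\sum_{j=0}^{d}(-1)^j e_j h_{d-j}=\delta_{d,0}$ once each $c_{i-1}$ is expanded via Proposition~\ref{r2}. The main obstacle is the topological simplification in the first step: the cap--cup cancellation on the thin strand, compounded with the thick merge/split on the $\F$ side, must be controlled carefully so that the dots coming from $b_{k,l}$ and the dots produced by $a_k$ land on the strands on which the symmetric-function identity above can be applied. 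Once the topological normalization is in place, the remainder of the argument is a bookkeeping of signs together with standard symmetric-function identities already used elsewhere in the paper.
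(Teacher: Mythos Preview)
Your proposal has the right overall shape but contains two genuine gaps.

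First, the topological simplification in your step~1 is mischaracterized. When you stack $g_{k,l}$ on $p_{k,l}$, the thin $\E$ created by the cup of $p_{k,l}$ does not meet a cap of the same strand; instead it crosses the $\F^{(k)}$ line and is absorbed into $\E^{(k)}$, while a separate thin strand split from $\F^{(k)}$ and a thin strand split from $\E^{(k)}$ are joined by the cap in $g_{k,l}$. After the obvious simplifications what remains is a \emph{curl} on the $\E^{(k)}$ strand (a thin line leaving $\E^{(k)}$, looping to the left, and re-entering $\E^{(k)}$), not a Reidemeister~II configuration. Resolving this curl requires the thick Reidemeister~I identity \eqref{R2} from the Appendix, and that identity does \emph{not} reduce the diagram to ``identity plus dots'': it produces an identity term together with a sum of bubble terms. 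Those bubble terms do not vanish by the symmetric-function identity $\sum (-1)^j e_j h_{d-j}=\delta_{d,0}$ alone; in the paper they cancel only after one invokes the additional relation \eqref{e-c} (equivalently \eqref{d21}) on $\F^{(k-2)}\E^{(k-1)}$, which uses centrality of the $c_i$. Your final ``scalar identity'' therefore does not suffice as stated.

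Second, your off-diagonal cancellation does not use \eqref{horsquare}. That identity involves the hook-partition element $c_{i-1,1}$, which never appears in $gp$. In the paper the cancellation is purely combinatorial: writing out $\gamma_{k,l}\bigl(\sum_i c_{i-1}\,w_i\wedge x\bigr)$ one gets three families of terms, and the two ``off-diagonal'' families differ only in whether the index $i$ is allowed to equal the removed index $i_j$; their difference is exactly the missing $i=i_j$ terms, which after an elementary sign count reproduce the missing summands in the diagonal sum. No nontrivial identity among the $c_i$'s is needed here.

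The paper's proof proceeds in the opposite order from yours: first compute $\gamma_{k,l}\circ b_{k,l}$ and carry out the combinatorial cancellation to reduce to $\sum_{i=1}^k(-1)^{i-1}[\text{curl with }c_{i-1},e_{k-i}]\,x$; then apply \eqref{R2} to undo the curl, producing the identity plus bubble terms; finally observe that the bubble contributions cancel by \eqref{e-c}. If you want to salvage your approach you must (a) replace ``Reidemeister~II'' by the curl relation \eqref{R2}, (b) keep track of the resulting bubble terms, and (c) replace the appeal to \eqref{horsquare} by the direct sign-matching argument above.
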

\begin{proof}
Putting $g$ on the top of $p$ we get for any $1< i_2< \dots < i_l$
$$
gp(w_{i_2}\wedge \dots \wedge w_{i_l}) =
\sum^l_{j=1} \sum^k_{i_1=1} (-1)^{j-i_j}
\xy
 (-4,12);(-4,-10); **[black][|(2)]\dir{-} ?(0)*[black][|(1)]\dir{<};
 (4,12);(4,-10); **[black][|(2)]\dir{-} ?(1)*[black][|(1)]\dir{>};
(-8,-8)*{\scriptstyle {k-1}};
(7,-8)*{\scriptstyle {k}};
(4,-2);(4,11)*{} **\crv{(-20,5) & (6,5)}?(.1)*{};
(0,-5)*{\bigb{\; c_{i_1-1}\;}};
(4,3)*{\bigb{ e_{{k-i_j}}}};
\endxy \;\;\; \gamma_{k, l-1} (w_{i_1}\wedge w_{i_2}\wedge \dots \hat w_{i_j}\dots
 \wedge w_{i_l})$$
which is equal to
$$ \sum^k_{i=1, i\neq i_2, \dots ,i_l} (-1)^{i-1}
\xy
 (-4,12);(-4,-10); **[black][|(2)]\dir{-} ?(0)*[black][|(1)]\dir{<};
 (4,12);(4,-10); **[black][|(2)]\dir{-} ?(1)*[black][|(1)]\dir{>};
(-8,-8)*{\scriptstyle {k-1}};
(7,-8)*{\scriptstyle {k}};
(4,-2);(4,11)*{} **\crv{(-20,5) & (6,5)}?(.1)*{};
(0,-5)*{\bigb{\; c_{i-1}\;}};
(4,3)*{\bigb{ e_{{k-i}}}};
\endxy \;\;\;  w_{i_2}\wedge \dots \wedge w_{i_l} \; +\;
\sum^l_{j=2}\;\;\sum^k_{i=2, i\neq i_2, \dots ,\hat i_j,\dots , i_l} (-1)^{i_j-j}
\xy
 (-4,12);(-4,-10); **[black][|(2)]\dir{-} ?(0)*[black][|(1)]\dir{<};
 (4,12);(4,-10); **[black][|(2)]\dir{-} ?(1)*[black][|(1)]\dir{>};
(-8,-8)*{\scriptstyle {k-1}};
(7,-8)*{\scriptstyle {k}};
(4,-2);(4,11)*{} **\crv{(-20,5) & (6,5)}?(.1)*{};
(0,-5)*{\bigb{\; c_{i-1}\;}};
(4,3)*{\bigb{ e_{{k-i_j}}}};
\endxy \;\;\; w_i\wedge w_{i_2}\wedge \dots\hat w_{i_j}\dots \wedge w_{i_l}\;
+
$$
$$ +
\sum^l_{j=2}\;\;\sum^k_{i=2, i\neq i_2, \dots, i_l} (-1)^{i_j-j-1}
\xy
 (-4,12);(-4,-10); **[black][|(2)]\dir{-} ?(0)*[black][|(1)]\dir{<};
 (4,12);(4,-10); **[black][|(2)]\dir{-} ?(1)*[black][|(1)]\dir{>};
(-8,-8)*{\scriptstyle {k-1}};
(7,-8)*{\scriptstyle {k}};
(4,-2);(4,11)*{} **\crv{(-20,5) & (6,5)}?(.1)*{};
(0,-5)*{\bigb{\; c_{i-1}\;}};
(4,3)*{\bigb{ e_{{k-i_j}}}};
\endxy \;\;\; w_i\wedge w_{i_2}\wedge \dots\hat w_{i_j}\dots \wedge w_{i_l}
 \, .$$
After cancellation we get
$$gp(w_{i_2}\wedge \dots \wedge w_{i_l})=
\sum^k_{i=1} (-1)^{i-1}
\xy
 (-4,12);(-4,-10); **[black][|(2)]\dir{-} ?(0)*[black][|(1)]\dir{<};
 (4,12);(4,-10); **[black][|(2)]\dir{-} ?(1)*[black][|(1)]\dir{>};
(-8,-8)*{\scriptstyle {k-1}};
(7,-8)*{\scriptstyle {k}};
(4,-2);(4,11)*{} **\crv{(-20,5) & (6,5)}?(.1)*{};
(0,-5)*{\bigb{\; c_{i-1}\;}};
(4,3)*{\bigb{ e_{{k-i}}}};
\endxy \;\;\;  w_{i_2}\wedge \dots \wedge w_{i_l}
$$
Using the Reidemeister move listed in Appendix
we can see that the only non-zero term without bubbles
is the desired identity, and all the bubble terms cancel since
\begin{equation}\label{e-c}
\sum^{k-1}_{i=0} (-1)^{i} \xy
(-4,8);(-4,-8); **[black][|(2)]\dir{-} ?(0)*[black][|(1)]\dir{<};
 (4,8);(4,-8); **[black][|(2)]\dir{-} ?(1)*[black][|(1)]\dir{>};
(-7,8)*{\scriptstyle {k-2}};
(8,8)*{\scriptstyle {k-1}};
(0,-3)*{\bigb{\;\;\; c_{i}\;\;\;}};
(4,3)*{\bigb{e_{k-1-i}}};
\endxy\;=\;0 \, 
\end{equation}
where the last identity is equivalent to \eqref{d21}. Here we are 
again using centrality of $c_i$'s.
 \end{proof}

\subsection{Horizontal homotopy}
Let $q_{k,1}: W_k \to \Dot (\E^{(k-1)}\onen)$
be the map defined by $q_{k,1}(w_i)=(-1)^{i-1} e_{k-i}$. Then let
$q_{k,l}: \Lambda^l W_k \to \Dot (\E^{(k-1)}\onen)
 \otimes \Lambda^{l-1} W_k$
be the derivation induced by $q_{k,1}$, i.e.
$$ q_{k,l} (w_{i_1}\wedge \dots \wedge w_{i_l})=\sum^l_{j=1} (-1)^{j-1}
q_{k,1}(w_{i_j}) w_{i_1}\wedge \dots \hat w_{i_j}\dots \wedge w_{i_l} \; .
$$
We set 
\[{\rm h}^H_{k,l}:=\; 
\xy
 (-4,10);(-4,-10); **[black][|(2)]\dir{-} ?(0)*[black][|(1)]\dir{<};
 (4,10);(4,-10); **[black][|(2)]\dir{-} ?(1)*[black][|(1)]\dir{>};
(-6,8)*{\scriptstyle {k}};
 (2,8)*{\scriptstyle {k}};
(-6,-8)*{\scriptstyle {k}};
(2,-8)*{\scriptstyle {k}};
(-12,-10);(4,7)*{} **\crv{(-12,0) & (-3,0)}?(.1)*\dir{>};
(-12,10);(4,-7)*{} **\crv{(-12,0) & (-3,0)}?(0)*\dir{<};
(4,0)*{\bigb{q_{k,l}}}
\endxy \quad : \E C_{k,l}\to \E C_{k,l-1} \, .
\]

\begin{lem}\label{lem9.4}
We have 
\begin{align} \label{11}
g_{k,l-1} {\rm h}^H_{k,l}&=0\\ \label{22}
{\rm h}^H_{k,l-1} \circ  {\rm h}^H_{k,l}&=0\\ \label{33}
{\rm h}^H_{k+1,l}(\E d^V_{k,l}) + (\E d^V_{k,l-1}){\rm h}^H_{k,l}&=0
\end{align}
\end{lem}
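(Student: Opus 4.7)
The three identities parallel parts (1)--(3) of Proposition \ref{pro-re} and will be established by the same diagrammatic strategy. The key structural facts are that $q_{k,l}$ is the derivation on $\Lambda^\bullet W_k$ induced by the linear map $q_{k,1}(w_i)=(-1)^{i-1}e_{k-i}$, and that in ${\rm h}^H_{k,l}$ the leftmost $\E$-strand is routed via a cap-cup attached to the rightmost thick $\E^{(k)}$-strand. Both facts make all three identities accessible via isotopy together with the sliding and bubble identities collected in the Appendix.

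My plan is to dispose of \eqref{22} and \eqref{33} first. For \eqref{22}, the composition ${\rm h}^H_{k,l-1}\circ {\rm h}^H_{k,l}$ threads the $\E$-strand through two successive cap-cups, which straighten out to a single routing via a Reidemeister-II move. The remaining scalar factor is a sum over ordered pairs $j_1<j_2$ whose coefficient is the commutator $q_{k,1}(w_{i_{j_1}})q_{k,1}(w_{i_{j_2}})-q_{k,1}(w_{i_{j_2}})q_{k,1}(w_{i_{j_1}})$, and this vanishes because the products $e_{k-i_{j_1}}e_{k-i_{j_2}}$ commute in $A$. For \eqref{33}, the vertical differential $\E d^V_{k,l}$ adds a thick strand and a cap between the upper pair of thick strands decorated by $\alpha_{k,l}$; the two compositions ${\rm h}^H_{k+1,l}(\E d^V_{k,l})$ and $(\E d^V_{k,l-1}){\rm h}^H_{k,l}$ differ only in the vertical position of this cap relative to the $q_{k,l}$ box. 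Sliding the cap past the box using centrality of the decorations and the Appendix rules produces the two configurations with opposite signs, yielding anticommutativity.

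The hard step is \eqref{11}. After substitution, composing $g_{k,l-1}$ with ${\rm h}^H_{k,l}$ threads the leftmost $\E$-strand twice through the bundle on the right: once as a cap-cup from the homotopy, and once as the cap that $g$ uses to absorb $\E$ into $\E^{(k)}$. Using the associativity of trivalent vertices (Proposition 2.4 of \cite{KLMS}), these two routings merge, producing on the rightmost thick strand a bubble decorated by an $e_{k-i_j}$ factor from $q_{k,l}$ and an $e_{k-i_t}$ factor from $a_k$, summed over the omitted indices. One must also account separately for the special case $\gamma_k(w_1)=-\sum_{j=2}^{k}c_{j-1}w_j$ in the definition of $\gamma_{k,l-1}$, which produces additional terms carrying a $c_{j-1}$ decoration on the first strand. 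After reorganizing these using Proposition \ref{r2} and the centrality of the $c_\lambda$'s, all surviving bubble sums reduce to expressions of the form \eqref{e-c} and hence vanish.

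The main obstacle lies in \eqref{11}: unlike \eqref{22} and \eqref{33}, which reduce to routine isotopy plus commutativity of symmetric functions, this identity requires a genuine bubble cancellation, and one must carefully pair the contributions coming from the $\delta_{i,1}$-exception in the definition of $\gamma_k$ against the ``pure bubble'' contributions in order to see that everything cancels via \eqref{e-c}.
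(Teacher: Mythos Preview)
Your parallel with Proposition~\ref{pro-re} is the right orientation, but the mechanism you propose for \eqref{22} has a genuine gap. After stacking the two copies of ${\rm h}^H$, the two decorations $e_{k-i_{j_1}}$ and $e_{k-i_{j_2}}$ coming from the two applications of $q_{k,1}$ do \emph{not} sit on the same segment of the thick $\E^{(k)}$-strand: between them the thin $\E$-strand (the output of the lower homotopy, which becomes the input of the upper one) merges into and then splits off from the thick line. Hence the expression you obtain is not a commutator in $A$; sliding either decoration past the intervening trivalent vertices produces correction terms. The paper's proof makes this explicit: the reduction of \eqref{22} is to the vanishing of an antisymmetrized pair of diagrams with $e_i$ below and $e_j$ above the merge--split, and one must move the line through the trivalent vertex (bubble terms cancel in the antisymmetrization), slide the $e$'s to a common position to obtain a single decoration $y=e_{j-1}e_i-e_{i-1}e_j$, and then use associativity together with the third Reidemeister move to kill both summands. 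The same pattern governs \eqref{11}.

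Your proposed mechanism for \eqref{11} is likewise off. The composition $g_{k,l-1}{\rm h}^H_{k,l}$ does not collapse to a bubble sum governed by \eqref{e-c}; rather, the $a_k$-part of $\gamma_{k,l-1}$ contributes a second $e$-decoration on the thick strand, and the vanishing again comes from the antisymmetrized pair via the $y$-trick and Reidemeister~III, not from \eqref{e-c}. The $\delta_{i,1}$-exception in $\gamma_k$ plays no special role in this reduction. For \eqref{33}, ``sliding the cap past the box by centrality'' is not enough: the paper reduces to a four-term diagrammatic identity with $e_i,e_j$ and $e_{i-1},e_{j-1}$ on either side, and one needs the sliding rules of the Appendix to pass between the two configurations.
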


\begin{proof}
Equation \eqref{11}  reduces to
\[\xy
 (-4,12);(-4,-10); **[black][|(2)]\dir{-} ?(0)*[black][|(1)]\dir{<};
 (4,12);(4,-10); **[black][|(2)]\dir{-} ?(1)*[black][|(1)]\dir{>}
?(0.3)*{\bullet}+(3,0)*{ e_{i}}
?(0.7)*{\bullet}+(3,0)*{ e_{j}};
(4,-1);(-4,3) **\crv{(0,0) & (-3,5)} ?(.3)*[black][|(1)]\dir{>};
(0,-10)*{\scriptstyle {k-1}};
(7,-10)*{\scriptstyle {k}};
(4,-8);(4,11)*{} **\crv{(-30,5) & (6,5)}?(.5)*\dir{>};
(-10,-10);(4,4)*{} **\crv{(-10,2) & (5,2)}?(.1)*\dir{>};
\endxy \;\;-\;\;
\xy
 (-4,12);(-4,-10); **[black][|(2)]\dir{-} ?(0)*[black][|(1)]\dir{<};
 (4,12);(4,-10); **[black][|(2)]\dir{-} ?(1)*[black][|(1)]\dir{>}
?(0.3)*{\bullet}+(3,0)*{ e_{j}}
?(0.7)*{\bullet}+(3,0)*{ e_{i}};
(4,-1);(-4,3) **\crv{(0,0)&(-3,5)} ?(.3)*[black][|(1)]\dir{>};
(0,-10)*{\scriptstyle {k-1}};
(7,-10)*{\scriptstyle {k}};
(4,-8);(4,11)*{} **\crv{(-30,5) & (6,5)}?(.5)*\dir{>};
(-10,-10);(4,4)*{} **\crv{(-10,2) & (5,2)}?(.1)*\dir{>};
\endxy\;\;=\;\;0
\]
for $1\leq i<j\leq k-1$. Indeed, moving  the line starting at the lowest left corner up through the 
3-valent vertex (note that the bubble terms cancel) and then moving
the $e_j$ and $e_i$ to the middle of the strand, 
 we get
that the left hand side is equal to
\[
\xy
 (-4,12);(-4,-10); **[black][|(2)]\dir{-} ?(0)*[black][|(1)]\dir{<};
 (4,12);(4,-10); **[black][|(2)]\dir{-} ?(1)*[black][|(1)]\dir{>}
?(0.5)*{\bullet}+(3,0)*{y};
(4,-4);(-4,0) **\crv{(0,-3) & (-3,1)} ?(.3)*[black][|(1)]\dir{>};
(0,-10)*{\scriptstyle {k-1}};
(7,-10)*{\scriptstyle {k}};
(4,-8);(4,11)*{} **\crv{(-30,5) & (6,5)}?(.5)*\dir{>};
(-10,-10);(4,7)*{} **\crv{(-10,4) & (5,4)}?(.1)*\dir{>}
?(0.7)*{\bullet};
\endxy  \quad -\quad
\xy
 (-4,12);(-4,-10); **[black][|(2)]\dir{-} ?(0)*[black][|(1)]\dir{<};
 (4,12);(4,-10); **[black][|(2)]\dir{-} ?(1)*[black][|(1)]\dir{>}
?(0.5)*{\bullet}+(3,0)*{y};
(4,-4);(-4,0) **\crv{(0,-3) & (-3,1)} ?(0.3)*{\bullet};
(0,-10)*{\scriptstyle {k-1}};
(7,-10)*{\scriptstyle {k}};
(4,-8);(4,11)*{} **\crv{(-30,5) & (6,5)}?(.5)*\dir{>};
(-10,-10);(4,7)*{} **\crv{(-10,4) & (5,4)}?(.1)*\dir{>};
\endxy
\]
where $y:=e_{j-1}e_i-e_{i-1}e_j$. 
Now  using  the associativity 
and the invariance under the 3. Reidemeister move with
all strands going in the same direction we can see that both
summands  vanish.

Equation \eqref{22} reduces to the following identity
\[\xy
 (-4,12);(-4,-10); **[black][|(2)]\dir{-} ?(0)*[black][|(1)]\dir{<};
 (4,12);(4,-10); **[black][|(2)]\dir{-} ?(1)*[black][|(1)]\dir{>};
(0,-10)*{\scriptstyle {k-1}};
(7,-10)*{\scriptstyle {k}};
(4,-8);(4,11)*{} **\crv{(-30,7) & (6,7)}?(.5)*\dir{>};
(-10,-10);(4,2)*{} **\crv{(-10,0) & (6,0)}?(.1)*\dir{>};
(-10,12);(4,3)*{} **\crv{(-10,2) & (6,2)}?(0)*\dir{<};
(4,-4)*{\bullet}+(3,0)*{e_i};
(4,6)*{\bullet}+(3,0)*{ e_j};
\endxy \;\;-\;\;
\xy
 (-4,12);(-4,-10); **[black][|(2)]\dir{-} ?(0)*[black][|(1)]\dir{<};
 (4,12);(4,-10); **[black][|(2)]\dir{-} ?(1)*[black][|(1)]\dir{>};
(0,-10)*{\scriptstyle {k-1}};
(7,-10)*{\scriptstyle {k}};
(4,-8);(4,11)*{} **\crv{(-30,7) & (6,7)}?(.5)*\dir{>};
(-10,-10);(4,2)*{} **\crv{(-10,0) & (6,0)}?(.1)*\dir{>};
(-10,12);(4,3)*{} **\crv{(-10,2) & (6,2)}?(0)*\dir{<};
(4,-4)*{\bullet}+(3,0)*{e_j};
(4,6)*{\bullet}+(3,0)*{ e_i};
\endxy\;\;=\;0
\]
for $1\leq i<j\leq k-1$, which can be proved similarly.

Finally, equation \eqref{33} follows from
\[
\xy
 (-4,10);(-4,-12); **[black][|(2)]\dir{-} ?(0)*[black][|(1)]\dir{<};
 (4,10);(4,-12); **[black][|(2)]\dir{-} ?(1)*[black][|(1)]\dir{>};
(-6,9)*{\scriptstyle {k}};
 (2,9)*{\scriptstyle {k}};
(-12,-12);(4,7)*{} **\crv{(-14,0) & (-3,0)}?(.1)*\dir{>};
(-12,10);(4,-3)*{} **\crv{(-14,0) & (-3,0)}?(0)*\dir{<};
(4,2)*{\bullet}+(3,0)*{e_j};
(4,-10)*{\bullet}+(3,0)*{e_i};
(-4,-5);(4,-5)*{} **\crv{(-0,-9)}?(.5)*\dir{>};
\endxy
\quad -\quad
\xy
 (-4,10);(-4,-12); **[black][|(2)]\dir{-} ?(0)*[black][|(1)]\dir{<};
 (4,10);(4,-12); **[black][|(2)]\dir{-} ?(1)*[black][|(1)]\dir{>};
(-6,9)*{\scriptstyle {k}};
 (2,9)*{\scriptstyle {k}};
(-12,-12);(4,7)*{} **\crv{(-14,0) & (-3,0)}?(.1)*\dir{>};
(-12,10);(4,-3)*{} **\crv{(-14,0) & (-3,0)}?(0)*\dir{<};
(4,2)*{\bullet}+(3,0)*{e_i};
(4,-10)*{\bullet}+(3,0)*{e_j};
(-4,-5);(4,-5)*{} **\crv{(-0,-9)}?(.5)*\dir{>};
\endxy \quad
=\quad
\xy
 (-4,10);(-4,-12); **[black][|(2)]\dir{-} ?(0)*[black][|(1)]\dir{<};
 (4,10);(4,-12); **[black][|(2)]\dir{-} ?(1)*[black][|(1)]\dir{>};
(-6,9)*{\scriptstyle {k}};
 (2,9)*{\scriptstyle {k}};
(-12,-12);(4,1)*{} **\crv{(-14,-1) & (-3,-1)}?(.1)*\dir{>};
(-12,10);(4,-8)*{} **\crv{(-14,-7) & (-3,-7)}?(0)*\dir{<};
(4,3)*{\bullet}+(3,0)*{e_j};
(4,-4)*{\bullet}+(5,0)*{e_{i-1}};
(-4,7);(4,7)*{} **\crv{(0,3)}?(.5)*\dir{>};
\endxy\quad -\quad
\xy
 (-4,10);(-4,-12); **[black][|(2)]\dir{-} ?(0)*[black][|(1)]\dir{<};
 (4,10);(4,-12); **[black][|(2)]\dir{-} ?(1)*[black][|(1)]\dir{>};
(-6,9)*{\scriptstyle {k}};
 (2,9)*{\scriptstyle {k}};
(-12,-12);(4,1)*{} **\crv{(-14,-1) & (-3,-1)}?(.1)*\dir{>};
(-12,10);(4,-8)*{} **\crv{(-14,-7) & (-3,-7)}?(0)*\dir{<};
(4,3)*{\bullet}+(3,0)*{e_i};
(4,-4)*{\bullet}+(5,0)*{e_{j-1}};
(-4,7);(4,7)*{} **\crv{(0,3)}?(.5)*\dir{>};
\endxy
\]
which holds for any $0\leq i<j\leq k$. 
\end{proof}

\subsection{Vertical homotopy}
We set
$${h}^V_{k,l}:=
\xy
 (-4,14);(-4,-10); **[black][|(2)]\dir{-} ?(0)*[black][|(1)]\dir{<};
 (4,14);(4,-10); **[black][|(2)]\dir{-} ?(1)*[black][|(1)]\dir{>};
(-12,-10);(-12,-6); **[black][|(1)]\dir{-} ?(0.5)*[black][|(1)]\dir{<};
(-12,-6);(-4,-6)*{} **\crv{(-12,-2) & (-4,-2)}?(.5)*{};
(4,-6);(-12,14)*{} **\crv{(4,2) & (-12,2)}?(1)*\dir{>};
(-6,-8)*{\scriptstyle {k}};
(2,-8)*{\scriptstyle {k}};
(-4,8)*{\bigb{\quad\;\; d_{k,l}\;\quad\;\;}};
\endxy \quad :  \E C_{k,l} \to \E C_{k-1,l}
$$
where
$d_{k,l}: \Lambda^l W_k \to 
\Dot (\E\F^{(k)}\E^{(k)}\onen)
 \otimes \Lambda^{l} W_{k-1} $
is given by
\begin{align*}
d_{k,0}(1)& =(-1)^{k-1}(c_{k-1})_{12}= (-1)^{k-1}\quad
\xy
(-8,-5);(-8,5); **[black][|(1)]\dir{-} ?(0)*[black][|(1)]\dir{<};
 (-4,5);(-4,-5); **[black][|(2)]\dir{-} ?(0)*[black][|(1)]\dir{<};
(0,5);(0,-5); **[black][|(2)]\dir{-} ?(1)*[black][|(1)]\dir{>};
(-6,0)*{\bigb{ c_{k-1}}};
\endxy
\\[.1cm] 
d_{k,1}(w_i) &=(-1)^k (c_{k-1})_{12} \; w_i
+ (-1)^{i-1}(e_{k-i})_3 \; (w_1+c_1 w_2+\dots+c_{k-2} w_{k-1})
\;\;\;{\text {for}}\; i<k \\[.1cm]
d_{k,1}(w_k) &=
 (-1)^{k-1}( w_1+c_1 w_2+\dots+c_{k-2} w_{k-1})
\;\;\;{\text {and}}\;\;\\[1mm]
d_{k,l}(w_{i_1}\wedge \dots w_{i_l}) &= (-1)^{l-1} 
\left((-1)^{k} (c_{k-1})_{12}\right)^{-l+1} d_{k,1}(w_{i_1})\wedge \dots \wedge 
d_{k,1}(w_{i_l})\\
&= (-1)^{l-1}\left(
(-1)^k (c_{k-1})_{12} \; w_{i_1}\wedge \dots \wedge w_{i_l}+
\sum^l_{j=1} (-1)^{i_j-1} (e_{k-i_j})_3\; w_{i_1}\wedge \dots (X)_j \dots \wedge w_{i_l}
\right)
\end{align*}
where $X:= \sum^{k-1}_{i=1} c_{i-1} w_i$ and $(X)_j$ means that we
replace $w_{i_j}$ with $X$. 
As before the lower indices indicate the strands
on which the morphism is acting. 


Let us illustrate this definition with few examples.
\[ d_{3,1}=
\begin{pmatrix} -(c_2)_{12} + (e_2)_3&-(e_1)_3& {\bf 1}\\
(e_2)_3c_1& -(c_2)_{12}-(e_1)_3 c_1& c_1 \end{pmatrix}
\]

\[ d_{3,1}\wedge d_{3,1}= \left( \det\begin{pmatrix}  
-(c_2)_{12} + (e_2)_3&-(e_1)_3\\
(e_2)_3c_1& -(c_2)_{12}-(e_1)_3 c_1
\end{pmatrix},
\det \begin{pmatrix} -(c_2)_{12} + (e_2)_3& {\bf 1}\\
(e_2)_3c_1& c_1 \end{pmatrix}, 
\begin{pmatrix} -(e_1)_3& {\bf 1}\\
 -(c_2)_{12}-(e_1)_3 c_1& c_1 \end{pmatrix}
\right)
\]
Hence, we get 
\begin{align*}
d_{3,1}\wedge d_{3,1} &= -(c_2)_{12}(-c_2, c_1, {\bf 1})\\
d_{3,2} &= (c_2, -c_1,{\bf 1})\, .\\
\end{align*}

Similarly, $d_{k,k-1}= (c_{k-1}, -c_{k-2}, \dots, (-1)^{k-2}c_1, (-1)^{k-1}{\bf 1})$.

\begin{lem}
We have 
\begin{align*} 
g_{k-1,l} { h}^V_{k,l}&=0\\
h^V_{k,l} p_{k,l}&=0\\
{ h}^V_{k-1,l} \circ  { h}^V_{k,l}&=0\\  
{\rm h}^H_{k-1,l} h^V_{k,l} + h^V_{k,l-1}{\rm h}^H_{k,l}&=0\\
h^V_{k,l+1} (\E d^H_{k,l})+ (\E d^H_{k-1,l}) h^V_{k,l}&=0\\ 
{\rm h}^H_{k,l+1}(\E d^H_{k,l}) + h^V_{k+1,l}(\E d^V_{k,l}) +(\E d^H_{k,l-1}) {\rm h}^H_{k,l} +(\E d^V_{k-1,l}) h^V_{k,l} &={\bf 1} - p_{k,l} g_{k,l}
\end{align*}
\end{lem}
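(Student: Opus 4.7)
The plan is to verify each of the six identities by evaluating on a basis element $w_{i_1}\wedge\cdots\wedge w_{i_l}$ of $\Lambda^l W_k$ and simplifying the resulting diagrams via the thick calculus---bubble slides and trivalent associativity from \cite{KLMS}, the Reidemeister moves collected in the Appendix, and centrality of the $c_\lambda$ from Section \ref{center}---exactly in the spirit of Proposition \ref{pro-re} and Lemma \ref{lem9.4}.

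The first five identities amount to diagrammatic bookkeeping. For $g_{k-1,l}\, h^V_{k,l}=0$ and $h^V_{k,l}\, p_{k,l}=0$, the composition yields a closed loop carrying a sum of the form $\sum(-1)^{i-1} c_{i-1}\otimes (e_{k-i})_3$, which vanishes by the identity \eqref{e-c}. The squared identity $h^V_{k-1,l}\, h^V_{k,l}=0$ will follow from the derivation formula $d_{k,l}=(-1)^{l-1}\bigl((-1)^k(c_{k-1})_{12}\bigr)^{-l+1} d_{k,1}^{\wedge l}$: iterating the cap that defines $h^V$ against this wedge power produces a skew-symmetric sum that collapses. The mixed anticommutations ${\rm h}^H_{k-1,l}\, h^V_{k,l}+h^V_{k,l-1}\,{\rm h}^H_{k,l}=0$ and $h^V_{k,l+1}(\E d^H_{k,l})+(\E d^H_{k-1,l})\,h^V_{k,l}=0$ are proved in the style of parts (3) and (4) of Proposition \ref{pro-re}: swapping the order of two decorations $e_i$ and $e_j$ on the same strand produces the correction $e_{j-1}e_i-e_{i-1}e_j$, and trivalent associativity then cancels both terms.

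The main identity on the last line is the heart of the lemma and the principal obstacle. The strategy is to first expand $p_{k,l}\, g_{k,l}$ using Reidemeister~II in the thick calculus: the cap of $h^V$ inside $p$ merged with the cup of $g$ produces an identity term plus bubble corrections coming from the sum $\sum(-1)^{i-1}c_{i-1}\otimes e_{k-i}$. Next, the horizontal contribution ${\rm h}^H_{k,l+1}(\E d^H_{k,l})+(\E d^H_{k,l-1})\,{\rm h}^H_{k,l}$ will be computed as in part (4) of Proposition \ref{pro-re}, producing a diagonal term which, via a formula analogous to \eqref{detbeta}, is visible as $(c_{k-1})_{13}$ up to lower-order corrections. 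The vertical contribution $h^V_{k+1,l}(\E d^V_{k,l})+(\E d^V_{k-1,l})\,h^V_{k,l}$ will be computed using the explicit derivation formula for $d_{k,l}$; here the key point is that the shared factor $X=\sum_{i=1}^{k-1} c_{i-1} w_i$ hidden inside $d_{k,1}(w_i)$ for $i<k$ and inside $d_{k,1}(w_k)$ reassembles into precisely the cap-cup configuration left over from $p_{k,l}\, g_{k,l}$.

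The bulk of the labour is sign bookkeeping---the $(-1)^l$ in $d^V_{k,l}$, the derivation signs $(-1)^{j-1}$ appearing in ${\rm h}^H$ and in $d_{k,l}$, and the wedge-exchange signs arising when a homotopy and a differential both touch the same index $i_j$. We plan to manage this by splitting into cases according to whether the homotopy acts at the same index as the differential. After this case split, every diagram reduces to a single thick-calculus relation which is equivalent to the rewriting of the central element $c_{k-1}\in\End(\E\F^{(k)}\E^{(k)}\onen)$ via its comultiplication, which we expect to isolate and verify as a preliminary claim.
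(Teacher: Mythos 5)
Your overall strategy---reduce to basis elements, deploy the same bubble-slide/trivalent-associativity/Reidemeister toolbox as in the proofs of Proposition~\ref{pro-re} and Lemma~\ref{lem9.4}, and isolate the surviving thick-calculus identity---is exactly the approach the paper takes, and the paper's own proof is equally terse (``similar to the previous computations'') for the first five identities. So there is no disagreement of method.

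For the final identity you underestimate what remains to be verified. The paper reduces it not to \emph{one} comultiplication-type relation for $c_{k-1}$ but to \emph{two} separate thick-calculus identities, which are Lemmas~\ref{lemma1} and~\ref{lemma2} in the Appendix. The first (your anticipated one) is a decomposition of the identity $2$-morphism on $\E\F^{(k)}\E^{(k)}\onen$ into a $c_{k-1}$-labeled loop, a $c_k$-term, and a $\sum_i (-1)^{i-1}c_i\,(e_{k-i})_3$ tail; this is indeed what absorbs $p_{k,l}g_{k,l}$. But there is also a second, genuinely independent identity relating cross-terms of the form $c_{i-1}(e_{j-1})$ in various slide positions; this is what cancels the off-diagonal contributions from ${\rm h}^H$ against the $X$-contributions from $h^V$, and it does not follow from the $c_{k-1}$ comultiplication alone. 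You should be prepared to isolate and prove both.

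A second caution concerns your claim that the diagonal term of the horizontal ``$dh$'' contribution is ``visible as $(c_{k-1})_{13}$.'' The analogous formula \eqref{ABterms} in the proof of Proposition~\ref{pro-re}(4) lives on $\F^{(k)}\E^{(k)}\E\onen$, where strand~$3$ is the thin $\E$ strand. Here you are in $\E\F^{(k)}\E^{(k)}\onen$, where strand~$1$ is the thin $\E$ and strand~$3$ is a thick $\E^{(k)}$; the roles of the thin and thick strands are reversed, the degree shifts in $d_{k,l}$ place $(c_{k-1})_{12}$ rather than a $13$-index, and the $e_{k-i}$ labels in $a_k$ sit on strand~$3$, not strand~$2$. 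The naive transposition of the index~$13$ from the $r\E$ side does not survive inspection, and if you carry it over uncritically the purported cancellation against $\mathbf{1}-p_{k,l}g_{k,l}$ will appear to fail. Redo the bookkeeping for the strand assignment before concluding.
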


\begin{proof}
 The proof of the last equality is based on 
the  two  identities  given in Lemmas 5,6 in Appendix.
The rest is  similar to the previous computations and hence
left to the reader.
\end{proof}

\subsection{Proof of Lemma \ref{mainlemma}}
Thus we proved, that 
the
 map \[\kappa_\E: \xymatrix{ 
\E r\onen  \ar[r]^{g} & \Int\ar[r]^{\bar f} & r\E\onen}
\]
has a homotopy  inverse
\[\bar \kappa_\E:\xymatrix{ 
\E r\onen & \ar[l]^{p}  \Int & \ar[l]^{ f}  r\E\onen}
\]
To construct $\kappa_\F$ we apply the symmetry $\sigma\omega$ to $\kappa_\E$
assuming that Theorem \ref{main} (Symmetry) holds. We get
\[\bar \kappa_\F: \xymatrix{ 
 r\F{\bf 1}_{n+2}  \ar[r]^{\sigma\omega(g)} & \Int\ar[r]^{\sigma\omega(\bar f)} 
& \F r{\bf 1}_{n+2}}
\]
together with its homotopy inverse
\[\kappa_\F: \xymatrix{ 
 r\F{\bf 1}_{n+2} & \ar[l]^{\sigma\omega(p)}  \Int & \ar[l]^{\sigma\omega( f)}  
 \F r{\bf 1}_{n+2}} \, .
\]
\qed

\subsection{Proof of Theorem \ref{main} (Centrality)}
\label{proof-main}
We assume that Theorem \ref{main} (Symmetry) holds.

Since any 1-morphism in $\UcatD$ is a direct sum of compositions
  of $\E\la t\ra$ and $\F\la t'\ra$ with $t,t'\in \Z$,
it is enough to check the statement for the generators. 
Lemma \ref{mainlemma} defines the maps $\kappa_\E$ and $\kappa_\F$
as well as their homotopy inverses. Applying symmetry,
we can define $\eta_\F=\psi (\bar\kappa_\F)$ and $\eta_\E=\psi (\kappa_\E)$.
The details are left to the reader.

\qed

\subsection{Comments on the naturality of maps $\kappa_X$}
To prove Conjecture 2 (Naturality) we need to show that
for  any chain map $f: X \to Y$ the squares below
commute up to chain homotopy.

\begin{equation*}
    \xy
   (-10,10)*+{X r\onen}="tl";
   (10,10)*+{r X\onen}="tr";
   (-10,-10)*+{Y r\onen}="bl";
   (10,-10)*+{r Y\onen}="br";
   {\ar^-{\kappa_{X}} "tl";"tr"};
   {\ar_{fr} "tl";"bl"};
   {\ar^{\kappa_{Y}} "bl";"br"};
   {\ar^{r f} "tr";"br"};
  \endxy
  \qquad
      \xy
   (-10,10)*+{rX\onen}="tl";
   (10,10)*+{X r\onen}="tr";
   (-10,-10)*+{rY\onen}="bl";
   (10,-10)*+{Y r\onen}="br";
   {\ar^-{\bar{\kappa}_{X}} "tl";"tr"};
   {\ar_{rf} "tl";"bl"};
   {\ar^{\bar{\kappa}_{Y}} "bl";"br"};
   {\ar^{fr} "tr";"br"};
  \endxy
\end{equation*}
The commutativity of similar diagrams for $\eta_X$ will follow 
then by applying  symmetry functors.
It is enough to check the commutativity for 
short chain complexes $f:X\to Y$, where $X$, $Y$ are  $\E\onen$,  $\E^2\onen$, 
 $\F\E\onen$ or $\onen$ and
the differential is one of the generating 2-morphisms: dot,
crossing, cup or cap.
We leave this problem
 for  future investigations.

\section{Symmetry 2-functors}
The 2-category $\Ucat$ has the symmetry group ${\cal G}=(\Z/2\Z)^3$
generated by the involutive 2-functors $\omega, \sigma, \psi$ described below.

\subsection{2-functor $\omega$}
Consider the operation on the diagrammatic calculus that rescales
the crossing $\Ucross \mapsto -\Ucross$ for all $n \in \Z$, inverts
the orientation of each strand and sends $n \mapsto -n$.

This gives a strict invertible 2-functor
$\omega\maps \Ucat \to \Ucat$ given by
\begin{eqnarray}
  \omega \maps \Ucat &\to& \Ucat \nn \\
  n &\mapsto&  -n \nn \\
  \onem\cal{E}^{\alpha_1} \cal{F}^{\beta_1}\cal{E}^{\alpha_2} \cdots
 \cal{E}^{\alpha_k}\cal{F}^{\beta_k}\onen\{s\}
 &\mapsto &
 \mathbf{1}_{-m} \cal{F}^{\alpha_1} \cal{E}^{\beta_1}\cal{F}^{\alpha_2} \cdots
\cal{F}^{\alpha_k}\cal{E}^{\beta_k}\mathbf{1}_{-n}\{s\}.
\end{eqnarray}
This 2-functor extends to a 2-functor
\begin{eqnarray}
  \omega \maps Kom(\Ucat) &\to& Kom(\Ucat) \nn \\
  n &\mapsto&  -n \nn \\
 (X^{\bullet},d)
 &\mapsto &
   \xy
    (-50,0)*+{\cdots}="1";
    (-30,0)*+{\omega(X^{i-1})}="2";
    (0,0)*+{\omega(X^i)}="3";
    (30,0)*+{ \omega(X^{i+1})}="4";
    (50,0)*+{\cdots}="5";
    {\ar^-{} "1";"2"};
    {\ar^-{\omega(d_{i-1})} "2";"3"};
    {\ar^-{\omega(d_i)} "3";"4"};
    {\ar^-{} "4";"5"};
   \endxy \nn \\
   f_i \maps X^{\bullet} \to Y^{\bullet} & \mapsto &
   \omega(f_i) \maps \omega(X^{\bullet}) \to \omega(Y^{\bullet}).
\end{eqnarray}
Finally, this operation extends to $\UcatD$. The images of
the idempotents
$e_a$, $e'_a$ under  the 2-functor $\omega$ are new idempotents which are
equivalent to the old ones and leave   thick calculus invariant.

\subsection{ 2-functor $\sigma$}
The operation on diagrams that rescales the crossing $\Ucross
\mapsto -\Ucross$ for all $n \in \Z$, reflects a diagram across the
vertical axis, and sends $n$ to $-n$ leaves invariant the relations on the
2-morphisms of $\Ucat$. 

This operation
is contravariant for composition of 1-morphisms, covariant for
composition of 2-morphisms, and preserves the degree of a diagram.
This symmetry gives an invertible 2-functor
\begin{eqnarray}
  \sigma \maps \Ucat &\to& \Ucat^{\op} \nn \\
  n &\mapsto&  -n \nn \\
  \onem\cal{E}^{\alpha_1} \cal{F}^{\beta_1}\cal{E}^{\alpha_2} \cdots
 \cal{E}^{\alpha_k}\cal{F}^{\beta_k}\onen\{s\}
 &\mapsto &
 \mathbf{1}_{-n} \cal{F}^{\beta_k} \cal{E}^{\alpha_k}\cal{F}^{\beta_{k-1}} \cdots
\cal{F}^{\beta_1}\cal{E}^{\alpha_1}\mathbf{1}_{-m}\{s\} \nn
\end{eqnarray}
that acts on 2-morphisms via the symmetry described above.  This 2-functor extends to a 2-functor
\begin{eqnarray}
  \sigma \maps Kom(\Ucat) &\to& Kom(\Ucat) \nn \\
  n &\mapsto&  -n \nn \\
 (X^{\bullet},d)
 &\mapsto &
   \xy
    (-50,0)*+{\cdots}="1";
    (-30,0)*+{\sigma(X^{i-1})}="2";
    (0,0)*+{\sigma(X^i)}="3";
    (30,0)*+{ \sigma(X^{i+1})}="4";
    (50,0)*+{\cdots}="5";
    {\ar^-{} "1";"2"};
    {\ar^-{\sigma(d_{i-1})} "2";"3"};
    {\ar^-{\sigma(d_i)} "3";"4"};
    {\ar^-{} "4";"5"};
   \endxy \nn \\
   f_i \maps X^{\bullet} \to Y^{\bullet} & \mapsto &
   \sigma(f_i) \maps \sigma(X^{\bullet}) \to \sigma(Y^{\bullet}).
\end{eqnarray}
Note that $\sigma$ acts contravariantly on 1-morphisms in $Kom(\Ucat)$.

Furthermore, $\sigma$ extends to $\UcatD$. The images
of the idempotents
$e_a$, $e'_a$
under $\sigma$  are equivalent idempotents, leaving thick calculus invariant.

\subsection{2-functor $\psi$}
This operation reflects across the  horizontal axis and invert orientation.
  This gives an invertible 2-functor defined by
\begin{eqnarray}
  \psi \maps \Ucat &\to& \Ucat^{\co} \nn \\
  n &\mapsto&  n \nn \\
  \onem\cal{E}^{\alpha_1} \cal{F}^{\beta_1}\cal{E}^{\alpha_2} \cdots
 \cal{E}^{\alpha_k}\cal{F}^{\beta_k}\onen\{s\}
 &\mapsto &
 \onem\cal{E}^{\alpha_1} \cal{F}^{\beta_1}\cal{E}^{\alpha_2} \cdots
\cal{E}^{\alpha_k}\cal{F}^{\beta_k}\onen\{-s\}
\end{eqnarray}
and on 2-morphisms $\psi$ reflects the diagrams representing
summands across the $x$-axis and inverts the orientation.

Since $\psi$ is contravariant on 2-morphisms in $\Ucat$, this 2-functor extends to a 2-functor
\begin{eqnarray}
  \psi \maps Kom(\Ucat) &\to& Kom(\Ucat) \nn \\
  n &\mapsto&  n \nn \\
 (X^{\bullet},d)
 &\mapsto &
   \xy
    (-50,0)*+{\cdots}="1";
    (-30,0)*+{\psi(X^{i+1})}="2";
    (0,0)*+{\psi(X^i)}="3";
    (30,0)*+{ \psi(X^{i-1})}="4";
    (50,0)*+{\cdots}="5";
    {\ar^-{} "1";"2"};
    {\ar^-{\psi(d_{i})} "2";"3"};
    {\ar^-{\psi(d_{i-1})} "3";"4"};
    {\ar^-{} "4";"5"};
   \endxy \nn \\
   f_i \maps X^{\bullet} \to Y^{\bullet} & \mapsto &
   \psi(f_i) \maps \psi(Y^{\bullet}) \to \psi(X^{\bullet}).
\end{eqnarray}
Notice that $\psi$ inverts the homological degree so that $\psi$ acts on a complex $(X^{\bullet}\onen,\partial)$ in $Kom(\Ucat)$ by $\psi(X^i) = (\psi X^{\bullet})^{-i}$.

These 2-functors commute with each other `on-the-nose':
\begin{equation}
  \omega \sigma = \sigma\omega , \qquad \sigma \psi = \psi \sigma, \qquad
  \omega \psi = \psi \omega.
\end{equation}

Again, this 2-functor extends straightforward
to the Karoubi envelope $\UcatD$. The images of the idempotents
$e_a$, $e'_a$
under symmetry functors are equivalent idempotents
with the same properties as before.

\section{Symmetries of the ribbon bicomplex}\label{sym}
In this section we describe the behavior of the ribbon 
bicomplex under the symmetry 2-functors.

\subsection{The image under $\omega$}
 The ``chain groups'' of the bicomplex $\omega(r\one)$
are
 $$\omega(C_{k,l}):= \E^{(k)} \F^{(k)}\onen\la kn-k\ra\otimes \Lambda^l W_k,\quad
W_k=\Span_\Z\{w_1,\dots, w_k\}, \quad \deg(w_j)=-2j\, $$
with the total shifts $\la -\frac{n^2}{2}+n\ra$ and $[n/2,n/2]$.
The horizontal differential
$ \omega( d^H_{k,l}):\omega( C_{k,l})\to \omega(C_{k,l+1})$ 
sends $x \mapsto \omega(c)\wedge x$ where
\[\omega(c):= 
\sum^k_{j=1}  \left( \sum^j_{i=0} (-1)^i\quad
\xy
 (-8,6);(-8,-6); **[black][|(2)]\dir{-} ?(1)*[black][|(1)]\dir{>};
 (0,6);(0,-6); **[black][|(2)]\dir{-} ?(0)*[black][|(1)]\dir{<};
 (2,5)*{\scriptstyle {k}};
 (-6,5)*{\scriptstyle {k}};
 (-8,0)*{};
(0,0)*{\bigb{h_{j-i}}};
(-8,0)*{\bigb{e_{i}}};
\endxy \;\right) \otimes w_j \in \Dot( \E^{(k)}\F^{(k)}\onen) \otimes W_k
\]

Similarly,  the vertical differential is
\[ \omega(d^V_{k,l}):= (-1)^l 
\xy
 (-2,-8);(-2,8); **[black][|(2)]\dir{-} ?(0)*[black][|(1)]\dir{<}?;
 (6,-8);(6,8); **[black][|(2)]\dir{-} ?(1)*[black][|(1)]\dir{>}?;
 (6,6)*{};(-2,6)*{} **[black][|(1)]\crv{(6,2) & (-2,2)} ?(.5)*[black][|(1)]\dir{>}?;
 (-6,9)*{\scriptstyle k+1};
 (10,9)*{\scriptstyle k+1};
 (-5,-9)*{\scriptstyle k};
 (8,-9)*{\scriptstyle k};
 (6,-2)*{\bigb{\alpha^{\omega}_{k,l}}};
\endxy\; :  \omega(C_{k,l})\to \omega(C_{k+1,l})\, \quad \text{where}
\]
\begin{align} \label{omega-alpha} 
\alpha^{\omega}_{k,l}(w_{i_1}\wedge w_{i_2}\wedge \dots\wedge w_{i_l})&=\\\nn
{\bf 1} \otimes w_{i_1}\wedge \dots \wedge w_{i_l}&-
\sum^l_{j=1} (-1)^{k+1-i_j+l-j}\; \;
\xy
 (0,6);(0,-6); **[black][|(2)]\dir{-} ?(0)*[black][|(1)]\dir{<};
(2,5)*{\scriptstyle {k}};
 (-7,5)*{\scriptstyle {}};
(-0,0)*{\bigb{e_{k+1-i_j}}};
\endxy  \quad\;
 \otimes (w_{i_i}\wedge \dots \wedge\hat w_{i_j}\wedge\dots \wedge w_{i_l})
\wedge w_{k+1}
\end{align}

Since all the relations in $\UcatD$ are invariant under symmetries,
$\omega(r\one)$ is a bicomplex.

\subsection{The image under $\sigma$}
The bicomplex $\sigma(r\one )$ has the same ``chain groups'' as $\omega(r\one)$
(i.e. $\sigma(C_{k,l})=\omega(C_{k,l})$)
with the differentials defined as follows:
The horizontal differential
$ \sigma( d^H_{k,l}):\sigma( C_{k,l})\to \sigma(C_{k,l+1})$ 
sends $x \mapsto \sigma(c)\wedge x$ where
\[\sigma(c):= 
\sum^k_{j=1}  \left( \sum^j_{i=0} (-1)^i\quad\quad
\xy
 (-8,6);(-8,-6); **[black][|(2)]\dir{-} ?(1)*[black][|(1)]\dir{>};
 (0,6);(0,-6); **[black][|(2)]\dir{-} ?(0)*[black][|(1)]\dir{<};
 (2,5)*{\scriptstyle {k}};
 (-6,5)*{\scriptstyle {k}};
 (-8,0)*{};
(-8,0)*{\bigb{h_{j-i}}};
(0,0)*{\bigb{e_{i}}};
\endxy \;\right) \otimes w_j \in \Dot( \E^{(k)}\F^{(k)}\onen) \otimes W_k
\]

Similarly,  the vertical differential is
\[ \sigma(d^V_{k,l}):= (-1)^l 
\xy
 (-2,-8);(-2,8); **[black][|(2)]\dir{-} ?(0)*[black][|(1)]\dir{<}?;
 (6,-8);(6,8); **[black][|(2)]\dir{-} ?(1)*[black][|(1)]\dir{>}?;
 (6,6)*{};(-2,6)*{} **[black][|(1)]\crv{(6,2) & (-2,2)} ?(.5)*[black][|(1)]\dir{>}?;
 (-6,9)*{\scriptstyle k+1};
 (10,9)*{\scriptstyle k+1};
 (-5,-9)*{\scriptstyle k};
 (8,-9)*{\scriptstyle k};
 (-2,-2)*{\bigb{\alpha_{k,l}}};
\endxy\; :  \sigma(C_{k,l})\to \sigma(C_{k+1,l})\, \quad \text{where}
\]
where $\alpha_{k,l}\in \Dot(\E^{(k)}\onen)\otimes \Lambda^l W_{k+1}$ is defined 
by  \eqref{alpha}.

\subsection{The image under $\psi$}
The 2-functor $\psi$ is contravariant on the 2-morphisms in $\UcatD$, hence
the bicomplex $\psi(r\onen)$ looks as follows:

\[ \xymatrix{C^L_{00}
	     &	& &	  \\
      C^L_{10}\ar[u]&C^L_{11} \ar[l]&&\\
		 C^L_{20}\ar[u]&
C^L_{21}\ar[l] \ar[u]&
C^L_{22}\ar[l]&\\
C^L_{30}\ar[u] &
C^L_{31}\ar[l]\ar[u]
 & C^L_{32}\ar[l]\ar[u]&
C^L_{33}\ar[l]\\
\dots\ar[u] &\dots\ar[u] &\dots \ar[u]& \dots
\ar[u]
}
       \]
where the ``chain groups'' are 
 $$C^L_{k,l}:=\psi(C_{k,l})= \F^{(k)} \E^{(k)}\onen
\la kn+k\ra\otimes \Lambda^l {\bar W}_k
$$
with the total  shifts $\la\frac{n^2}{2}+n\ra$ and $[-n/2,-n/2]$.
The horizontal differential 
$\psi( d^H_{k,l}): C^L_{k,l+1}\to C^L_{k,l}$ sends 
$$ \bar w_{i_1}\wedge\dots\wedge \bar w_{i_{l+1}}
 \mapsto \sum^l_{j=1} (-1)^{j-1} c_j \otimes  \bar w_{i_1}\wedge 
\hat {\bar w}_{i_j} \wedge \bar w_{i_{l+1}}$$ 
where as before
 $(c_d)_{\F^{(k)}\E^{(k)}\onen}=\sum^d_{j=0}(-1)^j e_j\otimes h_{d-j}$.
The vertical differential is
\[\psi( d^V_{k,l}):= (-1)^l 
\xy
 (-2,8);(-2,-8); **[black][|(2)]\dir{-} ?(0)*[black][|(1)]\dir{<}?;
 (6,-8);(6,8); **[black][|(2)]\dir{-} ?(0)*[black][|(1)]\dir{<}?;
 (6,-6)*{};(-2,-6)*{} **[black][|(1)]\crv{(6,-2) & (-2,-2)} ?(.5)*[black][|(1)]\dir{>}?;
 (-6,-9)*{\scriptstyle k+1};
 (10,-9)*{\scriptstyle k+1};
 (-5,9)*{\scriptstyle k};
 (8,9)*{\scriptstyle k};
 (6,3)*{\bigb{\alpha_{k,l}}};
\endxy\; :  C^L_{k+1,l}\to C^L_{k,l}\, \quad 
\]
with $\alpha_{k,l}$ defined as before by \eqref{alpha}.

It is easy to see that $\sigma\omega \psi (r\onen)=r^{-1}\onen$.



\section{Isomorphic bicomplex}\label{old}
This section provides a construction of the bicomplex $\tilde r\onen$,
which is isomorphic to the ribbon bicomplex and invariant under
$\sigma\omega$.

\subsection{The isomorphism $H$}
Let $W=\Span_\Z\{w_1,w_2, \dots\}$ and
$W_A=A\otimes W$.

Then there  exists an isomorphism
$H: W_A\to W_A$ with $$ H(w_m):=\sum_{j\geq m} h_{j-m} w_j\, .$$ 
Its inverse is defined by replacing $h_i$ with its 
antipode $(-1)^i e_i$,
hence we have
$$H^{-1}(w_m)=\sum_{j\geq m} (-1)^{j-m} e_{j-m} w_j\, .$$

We can use this map to define a non-trivial transformation
\[
H_{k,l}: C_{k,l} \to C_{k,l}\, 
\]
of the ``chain groups'' $C_{k,l}=\F^{(k)}\E^{(k)}\onen \otimes \Lambda^l W_k$
of the ribbon bicomplex as follows.

For $l=1$ we set
\begin{equation}\label{Hk1}
H_{k,1}:=\left( \begin{array} {rrrrrr}
{\bf 1} & 0 & 0& 0 &\dots&0\\
(h_1)_2 & {\bf 1} & 0& 0 &\dots&0\\
(h_2)_2 & (h_1)_2 & {\bf 1}& 0 &\dots&0\\
(h_3)_2 & (h_2)_2 & (h_1)_2& {\bf 1} &\dots&0\\
\dots &\dots &\dots &\dots &\dots &0\\
(h_{k-1})_2& (h_{k-2})_2 & (h_{k-3})_2&(h_{k-4})_2&\dots &{\bf 1}
\end{array}\right)=
\left(  (h_{i-j})_2 \right)_{1\leq i,j\leq k}
\end{equation}
where the matrix is written in the basis $w_1, w_2, \dots, w_k$
and all symmetric polynomials are sitting on the second strand.

This map obviously extends to $\Lambda^l W_k$ by setting
$$H_{k,l}(w_{i_1}\wedge w_{i_2}\wedge \dots \wedge w_{i_l}):=
\left(\Lambda^l H_{k,1}\right) (w_{i_1}\wedge w_{i_2}\wedge \dots \wedge w_{i_l})=
H_{k,1}(w_{i_1})\wedge H_{k,1}(w_{i_2})\wedge \dots \wedge
H_{k,1}(w_{i_l}) \, .$$

Inserting \eqref{Hk1}, we get
$$H_{k,l}(w_{i_1}\wedge w_{i_2}\wedge \dots \wedge w_{i_l}):=
\sum_{1\leq j_1<j_2<\dots<j_l\leq k}
\sum_{\sigma\in S_l} (-1)^\sigma
(h_{j_1-i_{\sigma(1)}})_2 (h_{j_2-i_{\sigma(2)}})_2
\dots (h_{j_l-i_{\sigma(l)}})_2
w_{i_1}\wedge w_{i_2}\wedge \dots \wedge w_{i_l}\, . $$
Hence, for $1\leq i_1<i_2 \dots<i_l\leq k$ and $1\leq j_1<j_2 \dots<j_l\leq k$
\[
\left(H_{k,l}\right)^{i_1,i_2,\dots ,i_l}_{j_1,j_2, \dots , j_l}
= 
\sum_{\sigma\in S_l} (-1)^\sigma
(h_{j_1-i_{\sigma(1)}})_2 (h_{j_2-i_{\sigma(2)}})_2
\dots (h_{j_l-i_{\sigma(l)}})_2
\]

The inverse map is defined in a similar way by using
\begin{equation}\label{H-1}
H^{-1}_{k,1}:=\left( \begin{array} {rrrrrr}
{\bf 1} & 0 & 0& 0 &\dots&0\\
-(e_1)_2 & {\bf 1} & 0& 0 &\dots&0\\
(e_2)_2 & -(e_1)_2 & {\bf 1}& 0 &\dots&0\\
-(e_3)_2 & (e_2)_2 & -(e_1)_2& {\bf 1} &\dots&0\\
\dots &\dots &\dots &\dots &\dots &0\\
(-1)^{k-1}(e_{k-1})_2& (-1)^{k-2}(e_{k-2})_2 & (-1)^{k-3}(e_{k-3})_2&(-1)^{k-4}(e_{k-4})_2&\dots &{\bf 1}
\end{array}\right)
\end{equation}
 or
$$ \left( H^{-1}_{k,1}\right)_{i,j}
= \left( (-1)^{i-j} (e_{i-j})_2 \right)_{1\leq i,j\leq k} \, .
$$
\vspace*{.2cm}
For $1\leq i_1<i_2 \dots<i_l\leq k$ and $1\leq j_1<j_2 \dots<j_l\leq k$ 
we have
\[
\left(H^{-1}_{k,l}\right)^{i_1,i_2,\dots ,i_l}_{j_1,j_2, \dots , j_l}
= (-1)^{\sum_s i_s+j_s}
\sum_{\sigma\in S_l} (-1)^\sigma
(e_{j_1-i_{\sigma(1)}})_2 (e_{j_2-i_{\sigma(2)}})_2
\dots (e_{j_l-i_{\sigma(l)}})_2 \, .
\]

\subsection{The bicomplex $\tilde r\onen$}
Let us denote by $\tilde r\onen$ the image of $r\onen$
under applying the isomorphism $H_{k,l}$ to each ``chain group'' $C_{k,l}$.
The horizontal and vertical
differentials of $\tilde r\onen$ are given by
\begin{equation}\label{diffVH}
 \tilde d^H_{k,l}:= H^{-1}_{k,l+1} d^H_{k,l} H_{k,l}\quad\quad
\tilde d^V_{k,l}= H^{-1}_{k+1,l} d^V_{k,l} H_{k,l}
\, .\end{equation}
Let us compute them.

We start with $\tilde d^H_{k,0}= H^{-1}_{k,1} d^H_{k,0}=
 H^{-1}_{k,1}\sum^k_{i=1}c_i w_i $.
Thus,  the $i$th entry of $\tilde d^H_{k,0}$  is
$$\sum^{i-1}_{j=0}(-1)^{i-j} (e_{i-j})_2 c_j= (c_i)_1 -(-1)^i (e_i)_2=
(-1)^i((e_i)_1-(e_i)_2  )=: y_i \in \Dot(\F^{(k)}\E^{(k)}\onen) \, .
$$
In the general case,
\begin{align*}
\tilde d^H_{k,l} (w_{i_1}\wedge\dots\wedge w_{i_l})&=
 (\Lambda^{l+1} H_{k,1})^{-1}\left(c\wedge (\Lambda^l H_{k,1})
(w_{i_1}\wedge \dots\wedge w_{i_l})\right)\\
&=(H_{k,1}^{-1} c)\wedge w_{i_1}\wedge \dots\wedge w_{i_l}\\
&= \sum^{k}_{i=1} y_i w_i \wedge w_{i_1}\wedge \dots\wedge w_{i_l}
\end{align*}
where we used the  centrality of $c$ and the previous computation.
Observe that $\tilde d^H_{k,l}=\sigma\omega(\tilde d^H_{k,l})$.

To compute the vertical differential we
will need the following notation:
\[t_p:=
\xy
 (-2,-6);(-2,6); **[black][|(2)]\dir{-} ?(1)*[black][|(1)]\dir{>}?;
 (6,-6);(6,6); **[black][|(2)]\dir{-} ?(0)*[black][|(1)]\dir{<}?;
 (6,2)*{};(-2,3)*{} **[black][|(1)]\crv{(6,-2) & (-2,-2)};
 ?(.5)*{\bullet}+(2,-2)*{\scs p}; 
 (-6,7)*{\scriptstyle k+1};
 (10,7)*{\scriptstyle k+1};
 (-5,-7)*{\scriptstyle k};
 (8,-7)*{\scriptstyle k};
\endxy\;
\]
Then it is easy to verify that $(e_j)_2 t_0=t_0 (e_j)_2 +t_1 (e_{j-1})_2$
 or more generally,
\begin{equation}\label{e-tcomut}
(e_{m_1 }\dots e_{m_l})_2 \, t_0=
\sum^l_{a=0}\;\; t_a\; \sum_{
\xy (0,2)*{\scs T\subset\{1,2,\dots, l\}}; (0,-1)*{\scs |T|=a};
\endxy}
(e_{m_1-\varepsilon_1(T)} \dots e_{m_l-\varepsilon_l(T)})_2
\end{equation}
where  $(xy)_2=(x)_2(y)_2$ and
$$\varepsilon_p(T)=\left\{\begin{array}{cc}
1&\quad p\in T\\
0&\quad p\notin T\end{array}\right. \, .
$$

For instance,
\begin{align*}
 (e_{m_1}e_{m_2}e_{m_3})_2 t_0 & =t_0  (e_{m_1}e_{m_2}e_{m_3})_2 + t_1\left(
 (e_{m_1-1}e_{m_2}e_{m_3})_2 +  (e_{m_1}e_{m_2-1}e_{m_3})_2 +  (e_{m_1}e_{m_2}e_{m_3-1})_2\right)\\
&+t_2 \left(  (e_{m_1-1}e_{m_2-1}e_{m_3})_2+  (e_{m_1-1}e_{m_2}e_{m_3-1})_2 +
 (e_{m_1}e_{m_2-1}e_{m_3-1})_2   
\right)\\ &  +t_3  (e_{m_1-1}e_{m_2-1}e_{m_3-1})_2 
\end{align*}

For simplicity, we put $t_0=t$. With this notation, let us first compute the
matrix for the vertical differential in the case when $l=1$.
Inserting \eqref{Hk1}, \eqref{H-1} into \eqref{diffVH}, we get
$$ (d^V_{k,1})_{i,j}=\left\{
\begin{array}{ll}
-\sum^{i-j}_{p=0}(-1)^p (e_p)_2 t (h_{i-j-p})_2=-t \delta_{i-j,0}+t_1\delta_{i-j,1} & \quad i-j\geq 0\\
0 &
\quad {\rm otherwise} \end{array}\right. \, 
$$
where $\delta_{i,j}$ is the Kronecker delta-function.
For example, for $k=3$ we have
\[ d^V_{k,1}=
\left( \begin{array} {rrrr}
{\bf 1} & 0 & 0& 0 \\
-(e_1)_2 & {\bf 1} & 0& 0\\
(e_2)_2 & -(e_1)_2 & {\bf 1}& 0 \\
-(e_3)_2 & (e_2)_2 & -(e_1)_2& {\bf 1}
\end{array}\right)\; (-t)
\left( \begin{array} {rrr}
{\bf 1} & 0 & 0 \\
0 & {\bf 1} & 0 \\
0 & 0 & {\bf 1} \\
(e_{3})_2& -(e_{2})_2 & (e_{1})_2
\end{array}\right)
\left( \begin{array} {rrr}
{\bf 1} & 0 & 0\\
(h_1)_2 & {\bf 1} & 0\\
(h_2)_2 & (h_1)_2 & {\bf 1}
\end{array}\right)
\]

In general, 
for $1\leq i_1<i_2 \dots<i_l\leq k$ and $1\leq j_1<j_2 \dots<j_l\leq k$ 
using the computation for $l=1$ case we get
\begin{align*}
\left(d^V_{k,l}\right)^{i_1,\dots ,i_l}_{j_1, \dots , j_l}&= (-1)^l
\sum_{1\leq p_1<\dots<p_l\leq k} \sum_{1\leq s_1<\dots<s_l\leq k+1}
 (H^{-1}_{k+1,l})^{s_1,\dots ,s_l}_{j_1, \dots , j_l}\;\; t\;\;(\alpha_{k,l})^{p_1, \dots , p_l}_{s_1, \dots , s_l}
(H_{k,l})^{i_1,\dots ,i_l}_{p_1, \dots , p_l}\\
& =\sum_{1\leq p_1<\dots<p_l\leq k+1}
 (H^{-1}_{k+1,l})^{p_1,\dots ,p_l}_{j_1, \dots , j_l}\;\; t\;\;
(H_{k+1,l})^{i_1,\dots ,i_l}_{p_1, \dots , p_l}
\end{align*}
By \eqref{e-tcomut}, we obtain
\begin{align*}
(H^{-1}_{k+1,l})^{p_1,\dots ,p_l}_{j_1, \dots , j_l}\; t &
= 
\sum^l_{a=0} t_a\; (-1)^{\sum_s (p_s-j_s) }
\sum_{\sigma\in S_l} (-1)^\sigma
\sum_{
\xy (0,2)*{\scs T\subset\{1,2,\dots, l\}}; (0,-1)*{\scs |T|=a};
\endxy} \prod^l_{s=1}
e_{j_s- k_{\sigma(s)}-\varepsilon_s(T)} \\
&= \sum^l_{a=0} (-1)^a t_a 
\sum_{
\xy (0,2)*{\scs T\subset\{1,2,\dots, l\}}; (0,-1)*{\scs |T|=a};
\endxy} 
(H^{-1}_{k+1,l})^{p_1,\dots ,p_l}_{j^T_1, \dots , j^T_l}\; 
\end{align*}
where $j^T_s:=j_s - \varepsilon_s(T)$. 
Hence,
\begin{align*}
\left(d^V_{k,l}\right)^{i_1,\dots ,i_l}_{j_1, \dots , j_l}=
& (-1)^l \sum^l_{a=0} (-1)^a t_a 
\sum_{
\xy (0,2)*{\scs T\subset\{1,2,\dots, l\}}; (0,-1)*{\scs |T|=a};
\endxy} \prod^l_{s=1} 
\delta_{j^T_s, i_s}
\\ &= \left\{ \begin{array}{cc}
(-1)^{l+a} t_a & {\rm if}\; (j_1, \dots, j_l)\; {\rm is\; obtained\; from }
\;(i_1,\dots, i_l)\; {\rm by\; shifting}\; a \;{\rm entries\; by}\; -1
\\ 0 & {\rm otherwise}\end{array}
\right.
\end{align*}
Observe that $\sigma\omega (t_a)=t_a$ and hence
$d^V_{k,l}=\sigma\omega (d^V_{k,l})$.

\section{Proof of the invertibility of the ribbon complex}
\def\1{\mathbbm{1}}%

\subsection{The 2-functor $\Gamma_N$}
%

Let us recall the definition of the 2-functor $\Gamma_N$ from \cite[Section 7]{Lau1}. On objects the 2-functor $\Gamma_{N}$ sends
$n$ to the ring $H_{k;N}$ whenever $n$ and $k$ are compatible:
\begin{eqnarray}
 \Gamma_{N} \maps \Ucat & \to & \Gr \nn \\
 n & \mapsto &
  \left\{\begin{array}{ccl}
    H_{k;N} & & \text{with $n=2k-N$ and $0\leq k \leq N$,} \\
    0  & & \text{otherwise.}
  \end{array} \right.
\end{eqnarray}
1-Morphisms of $\Ucat$ get mapped by $\Gamma_N$ to graded bimodules
\begin{eqnarray} \label{eq_GammaN_def}
 \Gamma_{N} \maps \Ucat & \to & \Gr  \\
  \onen\la s\ra & \mapsto &
  \left\{\begin{array}{ccl}
    H_{k;N}\la s\ra & & \text{with $n=2k-N$ and $0\leq k \leq N$,} \\
    0  & & \text{otherwise.}
  \end{array} \right.  \nn \\
  \cal{E}\onen\la s\ra & \mapsto &
  \left\{\begin{array}{ccl}
    H_{k+1,k;N}\la s+1-N+k\ra & & \text{with $n=2k-N$ and $0\leq k < N$,} \\
    0  & & \text{otherwise.}
  \end{array} \right. \nn \\
  \cal{F}\onen\la s\ra & \mapsto &
  \left\{\begin{array}{ccl}
    H_{k-1,k;N}\la s+1-k\ra & & \text{with $n=2k-N$ and $0< k \leq N$,} \\
    0  & & \text{otherwise}
  \end{array} \right. \nn
\end{eqnarray}
where
   the cohomology of the Grassmannian is given by
  \begin{equation} \nn
H_{k;N}:=
\Z[x_{1,n}, x_{2,n},\dots, x_{k,n};
y_{1,{n}}, \dots, y_{N-k, n}]/ I_{k;N}
\end{equation}
with $I_{k;N}$ the homogeneous ideal generated by elements equating powers of $t$ in the equation
\begin{equation} \nn
  (1+x_{1,n}t+\dots+x_{k,n}t^k)(1+y_{1,n}t+
\dots+y_{N-k,n} t^{N-k})=1.
\end{equation}

The cohomology of the $a$th iterated 1-step flag variety $H_{\underline k;N}$ with $\underline k=(k,k+1,k+2,\dots, k+a)$ is given by
\begin{equation} \nn
H_{\underline k;N}:=
\Z[x_{1,n}, x_{2,n},\dots, x_{k,n};\xi_1,\dots, \xi_a;
y_{1,{n+2a}}, \dots, y_{N-k-a, n+2a}]/ I_{\underline k;N}
\end{equation}
with $I_{\underline k;N}$ the homogeneous ideal generated by elements equating powers of $t$ in the equation
\begin{equation} \nn
  (1+x_{1,n}t+\dots+x_{k,n}t^k)(1+\xi_1 t)(1+\xi_2 t)\dots (1+\xi_a t)(1+y_{1,n+2a}t+
\dots+y_{N-k-a,n+2a} t^{N-k-a})=1.
\end{equation}

We will also use the cohomology of $a$-step flag variety corresponding to the sequence $\underline k=(k,k+a)$  given by
\begin{equation} \nn
H_{k,k+a;N}:=
\Z[x_{1,n}, x_{2,n},\dots, x_{k,n};
\varepsilon_1,\dots, \varepsilon_a;
y_{1,{n+2a}}, \dots, y_{N-k-a, n+2a}]/ I_{k,k+a;N}
\end{equation} \nn
with $I_{k,k+a;N}$ the homogeneous ideal generated by elements equating powers of $t$ in the equation
\begin{equation} \nn
  (1+x_{1,n}t+\dots+x_{k,n}t^k)(1+\varepsilon_1 t+\dots +\varepsilon_a t^a)(1+y_{1,n+2a}t+
\dots+y_{N-k-a,n+2a} t^{N-k-a})=1.
\end{equation}

It will be convenient in what follows to introduce a simplified notation in $\Gr$.
Corresponding to a fixed value of $N$, we set $n=2k-N$ and write
\begin{align} \label{eq_shorthand}
  \1_n^N &:= \Gamma_N(\onen) \nn \\
  \rE \1_n^N &= \1_{n+2}^N\rE = \1_{n+2}^N\rE\1_n^N := \Gamma_{N}(\cal{E}\onen) \nn \\
  \rF \1_n^N &= \1_{n-2}^N\rF = \1_{n-2}^N\rF\1_n^N := \Gamma_N(\cal{F}\onen)
\end{align}
as a shorthand for the various bimodules.  Juxtaposition of these symbols represents the tensor product of the corresponding bimodules. For example,
\[
\rF \rE\rE\1_n^N = H_{k+1,k+2;N} \otimes_{H_{k+2;N}} H_{k+2,k+1;N} \otimes_{H_{k+1;N}} H_{k+1,k;N}.
\]
Associated to a signed sequence $\ep$  is the $(H_{k+|\ep|},H_k)$-bimodule
\[
\rE_{\ep}\1_n^N := \rE_{\epsilon_1} \rE_{\epsilon_2}\dots \rE_{\epsilon_m}\1_n^N
\]
where $\rE_{+}:= \rE$ and $\rE_{-}:= \rF$. The 2-functor $\Gamma_N$ maps a composite $\cal{E}_{\ep}\onen$ of 1-morphisms in $\UcatD$ to the tensor product $\rE_{\ep}1_n^N$ in $\Gr$. Note that because tensor product of bimodules is only associative up to coherent isomorphism our notation is ambiguous unless we choose a parenthesization of the bimodules in question.  We employ the convention that all parenthesis are on the far left.  Hence, $\Gamma_N$ preserves composition of 1-morphisms only up to coherent 2-isomorphism.

It is sometimes convenient to use the following isomorphisms  from \cite{BL}.
\begin{align}
  \Gamma_N(\cal{E}^a\onen) &\cong H_{k+a,k+a-1, \dots, k;N} \la r_a\ra,
  \qquad r_a = \sum_{i=1}^{a}  i-N+k \\
  \Gamma_N(\cal{F}^a\onen) &\cong H_{k,k+1, \dots, k+a;N} \la r'_a \ra ,
  \qquad r'_a =\sum_{i=1}^{a} i-k.
\end{align}
We also define bimodules
\begin{align}
  \rE^{(a)}\1_n^N &:=H_{k+a,k;N}\la r_a -\frac{a(a-1)}{2}\ra, \\
  \rF^{(b)}\1_n^N &:=H_{k+a,k;N}\la r_a +\frac{a(a-1)}{2}\ra.
\end{align}

Let us denote by
$\rr\1^N_n$ the image under $\Gamma_N$ of the ribbon complex.

\subsection{Proof of Theorem \ref{flag}}
We first prove that tensoring on the left (or on the right)
with $\rr\1^N_n$ acts as a left (resp. right) multiplication with the identity 
on any left (resp. right) $H_{k;N}$-module up to degree shift.

We will consider the left action only, the right action can be proved similarly.
Note that it is enough to compute the left action of $\rr\1^N_n$  on $H_{k;N}$.
Let us first ignore the homological shift for simplicity.

Set $n=N$. Then $\rr\1^N_N=\1^N_N\la -N^2/2-N\ra$
in $\Kom(\Gr)$, simply because
$\rE\1^N_N=0$, and the result holds.

Assume $n=N-2k$, then
$$\rr\rF^{k}\1^N_N\simeq \rF^{k}\rr\1^N_N= \rF^{k}\1^N_N \la -N^2/2-N\ra$$
where the first homotopy equivalence holds due to centrality of $\rr\1^N_n$.
Now observe that
$$\rF^{k}\1^N_N\simeq\bigoplus_{[k]!} \rF^{(k)}\1^N_N$$
in $\Gr$. Hence, $\rr\1^N_n$ acts as a left multiplication with
$\1^N_n\la -N^2/2-N\ra$ on $\rF^{(k)}\1^N_N$ in $\Com(\Gr)$. But
$\rF^{(k)}\1^N_N$ is isomorphic to $H_{k;N}=\1^N_n$ as a left module over itself.
Hence we have the first statement.

Similarly, tensoring with $\rr^{-1}\1^N_n$ on the left and on the right
is homotopic to the identity functor shifted by $\la N^2/2+N\ra$,
which is inverse to $\rr\1^N_n$. Since homological shifts for $r\onen$ and
$r^{-1}\onen$ are inverse to each other, we have the result.

\subsection{The inverse limit of Schur quotients}
The Schur quotient $\UcatD_N$ of $\UcatD$ is defined by setting 
${\bf 1}_{N+2}=0$ (see \cite{MSV} for a more general definition).
Applying sl(2) relations, an easy induction argument shows 
that $\onen=0$ in $\UcatD_N$ for all $n<-N$ and $n>N$.
Moreover, this quotient is not empty, since
the functor
$\Gamma_N: \UcatD \to \Gr$ factorises through $\UcatD_N$ by its very 
definition.

For any $N'>N$ there is a natural projection
$$\tilde\Psi_{N',N}: \UcatD_{N'}\to \UcatD_{N}$$
defined by setting ${\bf 1}_{N+2}=0$. Taking all together, these maps define
an inverse system of 2-categories whose inverse limit is $\UcatD$
(compare \cite{BL}).

Now let us consider $\Com^b(\UcatD_N)$. The induced functor 
 $\Com^b(\UcatD)\to \Com^b(\Gr)$ factorises again through $\Com^b(\UcatD_N)$.
Using the natural projections
$$\Psi_{N',N}: \Com^b(\UcatD_{N'})\to \Com^b(\UcatD_{N})$$
for all $N'>N$ we can construct
$$^l\Com(\UcatD)=\varprojlim \Com^b (\UcatD_N)\, .$$
Observe that $r^{\pm 1}r^{\mp 1}\onen $ belongs to $^l\Com(\UcatD)$,
since its  projections to $\Com^b(\UcatD_N)$ are well-defined for all $N$
and compatible with each other.

\subsection{Proof of Theorem \ref{main} (Invertibility) }

 By the universal property of the inverse 2-limit we get
 a 2-functor $\hat{\Gamma} \maps ^l\Com(\UcatD) \to \varprojlim 
\Com^b(\Grn{})$, 
unique up to 2-isomorphism in \cat{iBicat}, making the diagram
\[
 \xy
  (0,30)*+{\varprojlim\Com^b(\UcatD_N)}="tt";
  (0,15)*+{\varprojlim \Com^b(\Gr)}="t";
  (-20,0)*+{\Com^b(\Grn{N+2})}="bl";
  (20,0)*+{\Com^b(\Gr)}="br";
  {\ar_{\pi_{N+2}} "t";"bl"};
  {\ar^{\pi_{N}} "t";"br"};
  {\ar_{\Psi_{N+2,N}} "bl";"br"};
  {\ar^{\hat{\Gamma}} "tt";"t"};
  {\ar@/^1pc/^{\Gamma_N} "tt";"br"};
  {\ar@/_1pc/_{\Gamma_N} "tt";"bl"};
 \endxy
\]
commute up to 2-isomorphism in \cat{iBicat}. Note that \cat{iBicat}
is a bicategory with objects bicategories, morphisms  (pseudo) 2-functors
and 2-morphisms given by icons.
Now the arguments  in \cite{BL} (e.g. proof of Theorem 3.2) 
imply that this
  2-functor $\hat{\Gamma} \maps ^l\Com(\UcatD) \to \varprojlim \Com^b(\Grn{})$
is an equivalence of 2-categories in \cat{iBicat}. Since
$r r^{-1}\onen = \hat{\Gamma}^{-1} (\varprojlim \1^N_n)$ by Theorem \ref{flag},
we get the desired result. $\hfill\Box$

\section{Appendix}
Let us collect the identities we need in the proofs.

\subsection{Sliding rules}
Generalizing the NilHecke algebra relations to the thick lines
 we get:
\begin{equation}
\xy
(-4,-6);(4,6); **[black][|(2)]\dir{-}; ?(0)*[black][|(1)]\dir{<}
?(.8)*{\bullet} +(-3,1)*{\scs h_n};
(-4,6);(4,-6); **[black][|(2)]\dir{-} ?(1)*[black][|(1)]\dir{>}?;
 (-6,6)*{\scriptstyle k}; 
 (6,6)*{\scriptstyle l}; 
\endxy\;\;=\;\;\xy
(-4,-6);(4,6); **[black][|(2)]\dir{-}; ?(0)*[black][|(1)]\dir{<}
?(.2)*{\bullet} +(2,-2)*{\scs h_n};
(-4,6);(4,-6); **[black][|(2)]\dir{-} ?(1)*[black][|(1)]\dir{>}?;
 (-6,6)*{\scriptstyle k}; 
 (6,6)*{\scriptstyle l}; 
\endxy\;\;+\;\;\sum_{i+j+k=n-1}
\xy
(-6,-6);(6,6); **[black][|(2)]\dir{-}; ?(0)*[black][|(1)]\dir{<}
?(.3)*{\bullet} +(-2,2)*{\scs h_i};
(-6,6);(6,-6); **[black][|(2)]\dir{-} ?(1)*[black][|(1)]\dir{>}?;
(-4,-4);(-4,4); **[black][|(1)]\dir{-}; 
?(.5)*{\bullet} +(-3,1)*{\scs h_j};
(4,-4);(4,4); **[black][|(1)]\dir{-}; 
?(.5)*{\bullet} +(3,1)*{\scs h_k};
\endxy
\end{equation}

\begin{equation}
\xy
(-4,-6);(4,6); **[black][|(2)]\dir{-}; ?(0)*[black][|(1)]\dir{<}
?(.8)*{\bullet} +(-3,1)*{\scs e_n};
(-4,6);(4,-6); **[black][|(2)]\dir{-} ?(1)*[black][|(1)]\dir{>}?;
 (-6,6)*{\scriptstyle l}; 
 (6,6)*{\scriptstyle k}; 
\endxy\;\;=\;\;\xy
(-4,-6);(4,6); **[black][|(2)]\dir{-}; ?(0)*[black][|(1)]\dir{<}
?(.2)*{\bullet} +(2,-2)*{\scs e_n};
(-4,6);(4,-6); **[black][|(2)]\dir{-} ?(1)*[black][|(1)]\dir{>}?;
 (-6,6)*{\scriptstyle l}; 
 (6,6)*{\scriptstyle k}; 
\endxy\;\;+\;\;
\xy
(-6,-6);(6,6); **[black][|(2)]\dir{-}; ?(0)*[black][|(1)]\dir{<}
?(.3)*{\bullet}+(-2,3)*{\scs e_{n-1}};
(-6,6);(6,-6); **[black][|(2)]\dir{-} ?(1)*[black][|(1)]\dir{>}?;
(-4,-4);(-4,4); **[black][|(1)]\dir{-}; 
(4,-4);(4,4); **[black][|(1)]\dir{-}; 
\endxy
\end{equation}

In particular, if $l=1$ we have
\begin{equation}\label{hn-slide}
\xy
(-4,-6);(4,6); **[black][|(1)]\dir{-}; ?(0)*[black][|(1)]\dir{<}
?(.8)*{\bullet} +(-3,1)*{\scs h_n};
(-4,6);(4,-6); **[black][|(2)]\dir{-} ?(1)*[black][|(1)]\dir{>}?;
 (-6,6)*{\scriptstyle k}; 
\endxy\;\;=\;\;\xy
(-4,-6);(4,6); **[black][|(1)]\dir{-}; ?(0)*[black][|(1)]\dir{<}
?(.2)*{\bullet} +(2,-2)*{\scs h_n};
(-4,6);(4,-6); **[black][|(2)]\dir{-} ?(1)*[black][|(1)]\dir{>}?;
 (-6,6)*{\scriptstyle k}; 
\endxy\;\;+\;\;\sum_{i+j=n-1}
\xy
(-5,6);(5,-6); **[black][|(2)]\dir{-} ?(1)*[black][|(1)]\dir{>};
(-5,-6);(-4,5); **[black][|(1)]\crv{(-1,0)}; 
?(.5)*{\bullet} +(-3,1)*{\scs h_i};
(4,-5);(5,6); **[black][|(1)]\crv{(1,0)};  ?(0)*\dir{<}
?(.5)*{\bullet} +(3,1)*{\scs h_j};
\endxy
\end{equation}

\begin{equation}\label{en-slide}
\xy
(-4,-6);(4,6); **[black][|(2)]\dir{-}; ?(0)*[black][|(1)]\dir{<}
?(.8)*{\bullet} +(-3,1)*{\scs e_n};
(-4,6);(4,-6); **[black][|(1)]\dir{-} ?(1)*[black][|(1)]\dir{>}?;
 (6,6)*{\scriptstyle k}; 
\endxy\;\;=\;\;\xy
(-4,-6);(4,6); **[black][|(2)]\dir{-}; ?(0)*[black][|(1)]\dir{<}
?(.2)*{\bullet} +(2,-2)*{\scs e_n};
(-4,6);(4,-6); **[black][|(1)]\dir{-} ?(1)*[black][|(1)]\dir{>}?;
 (6,6)*{\scriptstyle k}; 
\endxy\;\;+\;\;
\xy
(-5,-6);(5,6); **[black][|(2)]\dir{-}; ?(0)*[black][|(1)]\dir{<}
?(.4)*{\bullet}+(-1,3)*{\scs e_{n-1}};
(-4,-5);(-5,6); **[black][|(1)]\crv{(-1,0)} ?(0)*\dir{<};  
(4,-6);(4,5); **[black][|(1)]\crv{(1,0)};  
\endxy
\end{equation}

In what follows  an $x$ labeled bullet on a thick line
will mean  $h_x$ inserted.

\subsection{Reidemeister moves}
From Corollary 5.8 in \cite{KLMS} (for $b=k$, $a=1$)
and the bubble slide rule (4.11), we get
\begin{equation}\label{R2}
\xy
 (-8,8);(-8,-8); **[black][|(2)]\dir{-} ?(0)*[black][|(1)]\dir{<}?;
 (-4,-8);(-4,8); **[black][|(1)]\dir{-} ?(0)*[black][|(1)]\dir{<}?;
 (-10,8)*{\scriptstyle k};
\endxy
= (-1)^k
\xy
 (-8,8)*{};(-8,-8)*{} **[black][|(2)]\crv{(0,0)} ?(1)*[black][|(1)]\dir{>}?;
 (-3,8)*{};(-3,-8)*{} **[black][|(1)]\crv{(-11,0)} ?(0)*[black][|(1)]\dir{<}?;
 (-10,8)*{\scriptstyle k};
\endxy\quad  + 
(-1)^{k-1} \sum_{x+u+s+t=k+n-2}
\xy 0;/r.17pc/: 
(-13,0)*{\lbub{\scs x}};  
(-4,12);(-4,-12); **[black][|(2)]\dir{-} ?(1)*[black][|(1)]\dir{}?;
    (-4,0)*{\bullet}+(3,2)*{\scs u};
(-4,12)*{}="t1";
  (4,12)*{}="t2";
  "t1";"t2" **\crv{(-4,5) & (4,5)};  ?(1)*\dir{>}
  ?(0.5)*{\bullet}+(3,-2)*{\scs \;\; s};
  (-4,-12)*{}="t1";
  (4,-12)*{}="t2";
  "t2";"t1" **\crv{(4,-5) & (-4,-5)};  ?(1)*\dir{>}
  ?(.5)*{\bullet}+(3,2)*{\scs t};
\endxy
\end{equation}

Analogously, Theorem 5.6 and (4.12) in \cite{KLMS} imply
\[
\xy
 (-8,8);(-8,-8); **[black][|(2)]\dir{-} ?(1)*[black][|(1)]\dir{>}?;
 (-4,-8);(-4,8); **[black][|(1)]\dir{-} ?(1)*[black][|(1)]\dir{>}?;
 (-10,8)*{\scriptstyle k};
\endxy
= (-1)^k
\xy
 (-8,8)*{};(-8,-8)*{} **[black][|(2)]\crv{(0,0)} ?(0)*[black][|(1)]\dir{<}?;
 (-3,8)*{};(-3,-8)*{} **[black][|(1)]\crv{(-11,0)} ?(1)*[black][|(1)]\dir{>}?;
 (-10,8)*{\scriptstyle k};
\endxy\quad  + (-1)^{k-1} \sum_{x+u+s+t=k+n-2}
\xy 0;/r.17pc/: 
(-13,0)*{\rbub{\scs x}};  
(-4,12);(-4,-12); **[black][|(2)]\dir{-} ?(1)*[black][|(1)]\dir{>}?;
    (-4,0)*{\bullet}+(3,2)*{\scs u};
(-4,12)*{}="t1";
  (4,12)*{}="t2";
  "t1";"t2" **\crv{(-4,5) & (4,5)};  ?(0.8)*\dir{<}
  ?(0.5)*{\bullet}+(3,-2)*{\scs \;\; s};
  (-4,-12)*{}="t1";
  (4,-12)*{}="t2";
  "t2";"t1" **\crv{(4,-5) & (-4,-5)};  ?(0)*\dir{<}
  ?(.5)*{\bullet}+(3,2)*{\scs t};
\endxy
\]

Similar, to the proof of Proposition 5.8 in \cite{Lau1} we can show that
for all colors $n \in \Z$ of the right most region
the following equation holds
\begin{equation} \label{R3}
 \vcenter{
 \xy 0;/r.17pc/:
    (-4,-4)*{};(4,4)*{} **[black][|(1)]\crv{(-4,-1) & (4,1)};
    (4,-4)*{};(-4,4)*{} **[black][|(2)]\crv{(4,-1) & (-4,1)}  ?(0)*\dir{<};
    (4,4)*{};(12,12)*{} **[black][|(1)]\crv{(4,7) & (12,9)};
    (12,4)*{};(4,12)*{} **\crv{(12,7) & (4,9)};
    (-4,12)*{};(4,20)*{} **[black][|(2)]\crv{(-4,15) & (4,17)};
    (4,12)*{};(-4,20)*{} **\crv{(4,15) & (-4,17)}?(1)*\dir{>};
    (-4,4)*{}; (-4,12) **[black][|(2)]\dir{-};
    (12,-4)*{}; (12,4) **\dir{-};
    (12,12)*{}; (12,20) **[black][|(1)]\dir{-}?(1)*\dir{>};;
  (6,20)*{\scriptstyle k};
\endxy}
-\;
   \vcenter{
 \xy 0;/r.17pc/:
    (4,-4)*{};(-4,4)*{} **[black][|(1)]\crv{(4,-1) & (-4,1)};
    (-4,-4)*{};(4,4)*{} **[black][|(2)]\crv{(-4,-1) & (4,1)}?(0)*\dir{<};;
    (-4,4)*{};(-12,12)*{} **[black][|(1)]\crv{(-4,7) & (-12,9)};
    (-12,4)*{};(-4,12)*{} **\crv{(-12,7) & (-4,9)};
    (4,12)*{};(-4,20)*{} **[black][|(2)]\crv{(4,15) & (-4,17)};
    (-4,12)*{};(4,20)*{} **\crv{(-4,15) & (4,17)}?(1)*\dir{>};
    (4,4)*{}; (4,12) **[black][|(2)]\dir{-};
    (-12,-4)*{}; (-12,4) **\dir{-};
    (-12,12)*{}; (-12,20) **[black][|(1)]\dir{-}?(1)*\dir{>};;
   (-2,20)*{\scriptstyle k};
\endxy}
  \; = \;
\sum_{\xy (0,2)*{\scs s+t+x+u+v}; (0,-1)*{\scs =n-k+1}\endxy} \; \xy 0;/r.17pc/:
    (-4,12)*{}="t1";
    (4,12)*{}="t2";
  "t2";"t1" **\crv{(5,5) & (-5,5)}; ?(.15)*\dir{} ?(1)*\dir{>}
  ?(0.6)*{\bullet}+(-2,-2)*{\scs s};
(4,12);(4,-12); **[black][|(2)]\dir{-} ?(1)*[black][|(1)]\dir{}?;
    (-4,-12)*{}="t1";
    (4,-12)*{}="t2";
  "t2";"t1" **\crv{(5,-5) & (-5,-5)};  ?(0)*\dir{<}
  ?(.6)*{\bullet}+(-2,2)*{\scs t};
    (12,1)*{\rbub{\scs x}{}};
    (4,2)*{\bullet}+(-3,-2)*{\scs u};
    (22,12)*{};(22,-12)*{} **\dir{-} ?(0)*\dir{<};
    (22,4)*{\bullet}+(3,2)*{\scs v};
  \endxy
+\; (-1)^{k-1}
 \sum_{\xy (0,2)*{\scs s+t+x+u+v}; (0,-1)*{\scs =k-n-3}\endxy} \; 
\xy 0;/r.17pc/: 
(-20,12)*{};(-20,-12)*{} **\dir{-}?(0)*\dir{<};
  (-20,4)*{\bullet}+(-3,2)*{\scs v};
(-13,0)*{\lbub{\scs x}};  
(-4,12);(-4,-12); **[black][|(2)]\dir{-} ?(1)*[black][|(1)]\dir{}?;
    (-4,0)*{\bullet}+(3,2)*{\scs u};
(-4,12)*{}="t1";
  (4,12)*{}="t2";
  "t1";"t2" **\crv{(-4,5) & (4,5)};  ?(1)*\dir{>}
  ?(0.5)*{\bullet}+(3,-2)*{\scs \;\; s};
  (-4,-12)*{}="t1";
  (4,-12)*{}="t2";
  "t2";"t1" **\crv{(4,-5) & (-4,-5)};  ?(1)*\dir{>}
  ?(.5)*{\bullet}+(3,2)*{\scs t};
  \endxy
\end{equation}

\subsection{Further identities}
\begin{lemma}
\[
\xy 
(-6,8);(-6,-9); **[black][|(1)]\dir{-} ?(1)*[black][|(1)]\dir{>}?;
(1,7)*{\scs k};
(-1,8);(-1,-9); **[black][|(2)]\dir{-} ?(0)*[black][|(1)]\dir{<}?;
(4,8);(4,-9); **[black][|(2)]\dir{-} ?(1)*[black][|(1)]\dir{>}?;
(6,7)*{\scs k};
\endxy=
(-1)^{k-1}\;\; \xy
(-3,2);(6,-6); **[black][|(1)]\crv{(-6,-2) } 
?(1)*[black][|(1)]\dir{-}?;
(-6,8);(-3,2); **[black][|(1)]\crv{(-6,4)} ?(1)*[black][|(1)]\dir{>}?;
(2,7)*{\scs k};
(0,7);(6,7);**[black][|(1)]\crv{(3,3)}?(0.2)*[black][|(1)]\dir{<}?;
(0,8);(0,-9); **[black][|(2)]\dir{-} ?(0)*[black][|(1)]\dir{<}?;
(6,8);(6,-9); **[black][|(2)]\dir{-} ?(1)*[black][|(1)]\dir{>}?;
(8,7)*{\scs k};
(-6,-8);(0,-7);**[black][|(1)]\crv{(-3,0)}?(1)*[black][|(1)]\dir{-}?;
(-3,2)*{\bigb{ c_{k-1}}};
\endxy
+\;\; (-1)^k\;\;
\xy
 (-4,8);(-4,-9); **[black][|(2)]\dir{-} ?(0)*[black][|(1)]\dir{<};
 (4,8);(4,-9); **[black][|(2)]\dir{-} ?(1)*[black][|(1)]\dir{>};
(-6,-7)*{\scriptstyle {k}};
(2,-7)*{\scriptstyle {k}};
(-11,-9);(4,5)*{} **\crv{(-6,0) & (5,0)}?(.1)*\dir{>};
(-11,8);(4,-5)*{} **\crv{(-6,-2) & (5,-2)}?(0)*\dir{<};
(-6,4)*{\bigb{\; c_k\;}};
\endxy
+\;\; \sum^k_{i=1} (-1)^{i-1} 
\xy
 (-4,8);(-4,-9); **[black][|(2)]\dir{-} ?(0.5)*[black][|(1)]\dir{<};
 (4,8);(4,-9); **[black][|(2)]\dir{-} ?(1)*[black][|(1)]\dir{>};
(-6,7)*{\scriptstyle {k}};
 (2,7)*{\scriptstyle {k}};
%
(-11,-9);(4,5)*{} **\crv{(-12,2) & (-3,2)}?(.1)*\dir{>};
(-11,8);(4,-3)*{} **\crv{(-12,-2) & (-3,-2)}?(0)*\dir{<};
(4,0)*{\bullet}+(5,0)*{ e_{k-i}};
(0,-6)*{\bigb{\,\;\; c_i\,\;\;}};
\endxy
\]
\label{lemma1}
\end{lemma}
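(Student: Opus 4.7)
The left-hand side is the identity morphism of $\E\F^{(k)}\E^{(k)}\onen$, and the claim is that this identity admits a decomposition into three pieces corresponding to the isomorphism
\[
\E\F^{(k)}\E^{(k)}\onen \;\cong\; \F^{(k)}\E^{(k+1)}\onen \;\oplus\; (\text{lower-rank summands}),
\]
which is built from the standard $\E\F$-decomposition of KLMS. My strategy is to resolve the thin $\E$-strand against the thick $\F^{(k)}$-strand, then to absorb the resulting thin $\E$ into the thick $\E^{(k)}$ via a trivalent splitter, and finally to reorganise the bubble remainders produced in the first step via the definition of $c_d$.

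First, I apply the thick-strand Reidemeister II move \eqref{R2} of the Appendix to the pair consisting of the thin upward $\E$-strand and the thick downward $\F^{(k)}$-strand. This expresses the identity as a sum of a ``straight-through'' term (in which the thin strand passes behind the thick $\F^{(k)}$ to emerge on its right) plus a finite family of curl-with-bubble terms parametrised by $\lbub{x}$ carrying $h_u,h_s,h_t$-dots along the three strands, with $x+u+s+t = k+n-2$. Next, for the straight-through contribution I use associativity of the trivalent vertex (Proposition~2.4 of KLMS) to merge the thin $\E$ with the thick $\E^{(k)}$ into an $\E^{(k+1)}$ joined to $\E^{(k)}$ by a splitter; the $c_{k-1}$ box appearing in the first RHS term then arises naturally from the idempotent inside the trivalent vertex together with the global sign $(-1)^{k-1}$ coming from orientation reversal along the downward $\F^{(k)}$.

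For the bubble remainders, I slide each bubble rightwards using the bubble-slide rule (4.11 of KLMS) so that it sits between the two thick strands as a decoration on the rightmost region. Converting the sliding dots between the $h$-basis and the $e$-basis via $\sum_{j+l=d}(-1)^j e_j h_{d-j}=0$, the maximal-degree contribution collapses to a single $c_k$-labelled bubble on the $\F^{(k)}$ strand: this is the second term of the RHS, with sign $(-1)^k$. The remaining intermediate-degree contributions, after re-expressing $c_i = \sum_{j=0}^{i}(-1)^j e_j h_{i-j}$, group together with the $e_{k-i}$-decoration on the second strand to produce precisely $\sum_{i=1}^{k}(-1)^{i-1}(c_i)_{12}(e_{k-i})_2$, which is the third term of the RHS. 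Throughout I use the centrality of the $c_i$ established in Proposition~\ref{r2} to freely slide them along the strands without sign changes.

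The main obstacle will be the third paragraph: tracking the double-indexed sum $(x,u,s,t)$ produced by the Reidemeister move and re-indexing it so that the coefficients match the desired partition into a single $c_k$-term plus the $\sum_i c_i e_{k-i}$-term. This reorganisation is essentially a Pieri-type computation, and the signs have to be pinned down with some care because the strand carrying $h_*$ in \eqref{hn-slide}--\eqref{en-slide} reverses from downward to upward as one passes from $\F^{(k)}$ to $\E^{(k)}$. After verifying that the residual $\sum_{i=0}^{k-1}(-1)^i e_i h_{k-1-i}=0$ identity (a shift of \eqref{d21}) kills all terms not accounted for by the three summands of the RHS, the lemma follows.
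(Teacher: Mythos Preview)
Your approach diverges from the paper's in both direction and main tool. You propose to start from the left-hand side (the identity on $\E\F^{(k)}\E^{(k)}\onen$) and apply the Reidemeister~II relation~\eqref{R2} to resolve the thin $\E$ against the thick $\F^{(k)}$. The paper instead works from the right-hand side: it first simplifies the third summand via the centrality identity
\[
\sum_{i=1}^{k}(-1)^{k-i}(e_{k-i})_{3}\,(c_i)_{123}\;=\;(c_k)_{12}
\quad\text{in }\Dot(\E\F^{(k)}\E^{(k-1)}),
\]
then applies the Reidemeister~III relation~\eqref{R3} (not~\eqref{R2}) to the second summand, and finally uses the four-term sliding identity displayed in the proof to match the resulting non-smoothed terms against the left-hand side, checking separately that all bubble terms cancel.

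Your plan has two concrete gaps. First, the claim that the $c_{k-1}$ decoration in the first RHS term ``arises naturally from the idempotent inside the trivalent vertex'' is not correct: the splitter/merger idempotents in the thick calculus do not carry any $c_{k-1}$ decoration, and no amount of associativity of trivalent vertices will produce one. In the paper's argument the $c_{k-1}$ instead appears only after applying~\eqref{R3} to the $c_k$-decorated second summand and then invoking the displayed four-term identity relating $(c_k)_{234}$ and $(c_{k-1})$ with a cap. Second, the bubble remainder from~\eqref{R2} has the thin strand interacting with the $\F^{(k)}$ strand via cups and caps, whereas the second and third RHS terms have the thin strand entering and leaving the $\E^{(k)}$ strand, creating an $\E^{(k-1)}$ segment in the middle. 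Converting one topology to the other is not a matter of bubble-sliding alone; you would need an additional curl or splitter move on the $\E$ side, which your outline does not supply. It is plausible that a proof along your lines can be completed, but as written the key mechanism producing the $c_{k-1}$ and the correct $\E^{(k-1)}$ segment is missing.
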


\begin{proof}
We first simplify the last summand of this identity by using
the fact that $c_i$ is central and
$$\sum^k_{i=1} (-1)^{k-i} \;\;\;\xy
 (-4,-6);(-4,4); **[black][|(1)]\dir{-} ?(0)*[black][|(1)]\dir{<};
 (0,-6);(0,4); **[black][|(2)]\dir{-} ?(1)*[black][|(1)]\dir{>};
 (4,-6);(4,4); **[black][|(2)]\dir{-} ?(0)*\dir{<};
(4,2)*{\bullet};
(9,2)*{ e_{k-i}};
(8,-6)*{\scriptstyle {k-1}};
(-2,-6)*{\scriptstyle {k}};
(0,-2)*{\bigb{\quad c_i\quad}};
\endxy\;\; =\;\;\;\xy
 (-4,-6);(-4,4); **[black][|(1)]\dir{-} ?(0)*[black][|(1)]\dir{<};
 (0,-6);(0,4); **[black][|(2)]\dir{-} ?(1)*[black][|(1)]\dir{>};
 (4,-6);(4,4); **[black][|(2)]\dir{-} ?(0)*\dir{<};
(8,-6)*{\scriptstyle {k-1}};
(-2,-6)*{\scriptstyle {k}};
(-2,0)*{\;\bigb{c_k}\;};
\endxy
$$ 
Then we 
apply \eqref{R3} to the second summand on the right hand side.
Further we use
$$\xy
 (-4,-7);(4,4); **[black][|(1)]\crv{(2,-4)};
(4,4);(4,7);**\dir{-}?(0)*\dir{<}; 
(1,-7);(-3,7); **[black][|(1)]\dir{-} ?(0)*[black][|(1)]\dir{<};
 (4,-7);(0,7); **[black][|(2)]\dir{-} ?(1)*[black][|(1)]\dir{>};
(6,-8)*{\scriptstyle {k}};
(-1,3)*{\bigb{\, c_k\,}}
\endxy\;\;-\;\;\xy
 (-4,-7);(4,7); **\crv{(0,6)}?(0)*\dir{<};
(1,-7);(-3,7); **[black][|(1)]\dir{-} ?(0)*[black][|(1)]\dir{<};
 (4,-7);(0,7); **[black][|(2)]\dir{-} ?(1)*[black][|(1)]\dir{>};
(6,-8)*{\scriptstyle {k}};
(2,-3)*{\bigb{\, c_k\,}}
\endxy\;\;=\;\;
\xy
(-4,-7);(-4,7);**\dir{-}?(0)*\dir{<}; 
(4,-7); (0,7); **[black][|(2)]\crv{(4,3)} ?(1)*[black][|(1)]\dir{>};
(1,-7);(4,7); **[black][|(1)]\crv{(1,3)} ?(0)*[black][|(1)]\dir{<};
(6,-8)*{\scriptstyle {k}};
(0,-3)*{\bigb{\; c_{k-1}\;}};
\endxy\;\; -\;\;
\xy
(1,-7);(-3,7); **[black][|(1)]\dir{-} ?(0)*[black][|(1)]\dir{<};
 (4,-7);(0,7); **[black][|(2)]\dir{-} ?(1)*[black][|(1)]\dir{>};
(6,-8)*{\scriptstyle {k}};
(0,0)*{\bigb{ c_{k-1}}};
 (-4,-7);(4,-7); **\crv{(0,-1)};
 (4,7);(0,7); **\crv{(2,2)}?(1)*\dir{>};
\endxy
$$
to simplify the resulting terms. 
The last equality follows from \eqref{hn-slide} and \eqref{en-slide}.
It is easy to check that 
all terms with bubbles sum to zero.
\end{proof}

\begin{lemma} \label{lemma2}
For $1\leq i,j\leq k$ we have
\[
\xy
(0,-8);(6,-8); **[black][|(1)]\crv{(3,-3)} ?;
(-6,8);(0,8); **[black][|(1)]\crv{(-3,4)} ?(1)*[black][|(1)]\dir{>}?;
(2,-8)*{\scs k};
(0,9);(0,-9); **[black][|(2)]\dir{-} ?(0)*[black][|(1)]\dir{<}?;
(6,9);(6,-9); **[black][|(2)]\dir{-} ?(1)*[black][|(1)]\dir{>}?;
(8,-8)*{\scs k};
(6,1);(-6,-9);**[black][|(1)]\crv{(-6,-5) }?(.3)*[black][|(1)]\dir{>}?;
(3,4)*{\bigb{\, c_{i-1}\,}};
(6,-3)*{\bullet}+(5,0)*{e_{j-1}};
\endxy \;\;-\;\;
\xy
(-3,2);(6,-6); **[black][|(1)]\crv{(-6,-2) } 
?(1)*[black][|(1)]\dir{-}?;
(-6,8);(-3,2); **[black][|(1)]\crv{(-6,4)} ?(1)*[black][|(1)]\dir{>}?;
(2,-8)*{\scs k};
(0,7);(6,7);**[black][|(1)]\crv{(3,3)};
(0,8);(0,-9); **[black][|(2)]\dir{-} ?(0)*[black][|(1)]\dir{<}?;
(6,8);(6,-9); **[black][|(2)]\dir{-} ?(1)*[black][|(1)]\dir{>}?;
(8,-8)*{\scs k};
(-6,-8);(0,-7);**[black][|(1)]\crv{(-4,0)}?(1)*[black][|(1)]\dir{-}?;
(0,2)*{\bigb{\;\;\, c_{i-1}\,\;\;}};
(6,-3)*{\bullet}+(5,0)*{e_{j-1}};
\endxy
=
\xy
 (-4,8);(-4,-9); **[black][|(2)]\dir{-} ?(0)*[black][|(1)]\dir{<};
 (4,8);(4,-9); **[black][|(2)]\dir{-} ?(.3)*[black][|(1)]\dir{>};
(-6,-8)*{\scriptstyle {k}};
(2,-8)*{\scriptstyle {k}};
(-11,-9);(4,2)*{} **\crv{(-12,0) & (-3,0)}?(.1)*\dir{>};
(-11,8);(4,-6)*{} **\crv{(-12,-4) & (-3,-4)}?(0)*\dir{<};
(4,-2)*{\bullet}+(5,0)*{ e_{j-1}};
(0,5)*{\bigb{\,\;\; c_i\,\;\;}};
\endxy\;\;-\;
\xy
 (-4,8);(-4,-9); **[black][|(2)]\dir{-} ?(0.5)*[black][|(1)]\dir{<};
 (4,8);(4,-9); **[black][|(2)]\dir{-} ?(1)*[black][|(1)]\dir{>};
(-6,7)*{\scriptstyle {k}};
 (2,7)*{\scriptstyle {k}};
(-11,-9);(4,5)*{} **\crv{(-12,2) & (-3,2)}?(.1)*\dir{>};
(-11,8);(4,-3)*{} **\crv{(-12,-2) & (-3,-2)}?(0)*\dir{<};
(4,1)*{\bullet}+(5,0)*{ e_{j-1}};
(0,-6)*{\bigb{\,\;\; c_i\,\;\;}};
\endxy\;\;+
\;
\xy
 (-4,8);(-4,-9); **[black][|(2)]\dir{-} ?(0)*[black][|(1)]\dir{<};
 (4,8);(4,-9); **[black][|(2)]\dir{-} ?(.5)*[black][|(1)]\dir{>};
(-6,-8)*{\scriptstyle {k}};
(2,-8)*{\scriptstyle {k}};
(-11,-9);(4,2)*{} **\crv{(-3,0) & (3,0)}?(.1)*\dir{>};
(-11,8);(4,-3)*{} **\crv{(-3,-2) & (3,-2)}?(0)*\dir{<};
(4,-6)*{\bullet}+(3,0)*{ e_{j}};
(-3,5)*{\bigb{\quad c_{i-1}\quad\,}};
\endxy\;\;-\;
\xy
 (-4,8);(-4,-9); **[black][|(2)]\dir{-} ?(0)*[black][|(1)]\dir{<};
 (4,8);(4,-9); **[black][|(2)]\dir{-} ?(.4)*[black][|(1)]\dir{>};
(-6,-8)*{\scriptstyle {k}};
(2,-8)*{\scriptstyle {k}};
(-11,-9);(4,0)*{} **\crv{(-3,-2) & (3,-2)}?(.1)*\dir{>};
(-11,8);(4,-5)*{} **\crv{(-3,-4) & (3,-4)}?(0)*\dir{<};
(4,1)*{\bullet}+(3,0)*{ e_{j}};
(-3,5)*{\bigb{\quad c_{i-1}\,\quad}};
\endxy
\]
\end{lemma}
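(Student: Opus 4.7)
My plan is to follow the same strategy as in the proof of Lemma \ref{lemma1}: apply Reidemeister R3 plus the sliding rules and then collect terms, using centrality of $c_i$ and $c_{i-1}$ to move them freely throughout the diagrams.

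The two diagrams on the left-hand side differ only by the way the strand starting at the lower-left is routed: in the first it passes on one side of the $c_{i-1}$ box and in the second on the other side. The difference between two such routings of a strand through a region is precisely what is computed by the Reidemeister 3 move \eqref{R3} (together with the associativity of trivalent vertices, cf.\ Proposition 2.4 of \cite{KLMS}). So I first apply \eqref{R3} to rewrite the LHS as (i) a diagram where the connecting strand has been slid across the crossing, plus (ii) a controlled sum of bubble-corrections indexed by $s+t+x+u+v$ as in \eqref{R3}.

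Second, since $c_{i-1}$ is central in $\UcatD(n,m)$ by the proposition in Section~\ref{sub:center}, I may freely slide it past any 2-morphism. This lets me bring the $c_{i-1}$ box next to the rightmost strand carrying $e_{j-1}$, and then convert it using the identity \eqref{horsquare}
\[
c_{i-1,1}+(c_i)_{23}-(c_1)_{23}c_{i-1}=0
\]
which already appears in the proof of Proposition~\ref{4.2}. This identity is exactly what is needed to trade a $c_{i-1}$ multiplied by a dot on the adjacent strand for a $c_i$ sitting in the same position, up to correction terms involving $(c_1)_{23}c_{i-1}$. The latter terms are precisely what accounts for the appearance of both $e_{j-1}$ and $e_j$ dots on the right-hand side, once I further apply the sliding rules \eqref{hn-slide} and \eqref{en-slide} to commute the dot past the adjacent crossing (each application splits off a derivative term, shifting the dot's degree by one).

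The main obstacle, exactly as in Lemma \ref{lemma1}, is to check that all the bubble terms produced by \eqref{R3} and the dot-slides cancel. The cancellation will rely on the identity \eqref{e-c}, which is equivalent to \eqref{d21}, namely
\[
\sum_{p=0}^{k-1}(-1)^p c_p\,(e_{k-1-p})_2 = 0 \in \Dot(\F^{(k-1)}\E^{(k)}\onen),
\]
plus its companion with $c_i$ and $c_{i-1}$ shifted by one. Once the bubble contributions are grouped according to the exponent $x$ in \eqref{R3}, each group reduces to such an alternating sum and therefore vanishes, leaving only the four explicit diagrams on the right-hand side. The remaining bookkeeping is routine and parallel to the end of the proof of Lemma \ref{lemma1}.
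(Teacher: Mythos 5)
Your toolkit is the right one (Reidemeister 3, centrality, a coproduct identity of $(c_\lambda)_f$ on adjacent strands, the sliding rules, and bubble cancellation), and your overall instinct---normalize the dot positions and then cancel---matches the structure of the argument. But the opening reduction is not justified and is the crux: you claim the two left-hand diagrams differ only by the routing of the lower-left strand so that \eqref{R3} computes their difference directly. That is not what is happening. The two LHS diagrams are not related by a single R3 move across a crossing; moving a strand past the $c_{i-1}$ coupon is centrality, not R3, and even after using centrality the remaining topological discrepancy is not a single R3 configuration. Without this reduction, the rest of your sketch does not get off the ground.

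The paper's actual organization is the reverse of yours: it applies \eqref{R3} to the \emph{first two summands on the right-hand side} (the two $c_i$-decorated diagrams) and then systematically slides all dots to a fixed reference position (the ``dashed line'' in the proof). It is in the course of this dot-normalization---via \eqref{hn-slide}, \eqref{en-slide}---that the two LHS terms emerge as residues, and the surviving ``non-smoothed'' contributions cancel using the four-strand coproduct identity $(c_i)_{23}=c_i-c_{i-1}(e_1)_1$ in $\Dot(\E\F^{(k)}\E\E^{(k-1)})$. Your cited identity is the right idea but the wrong instance: what you wrote is \eqref{ttt} (the 3-strand version with indices $23$ from the $\E r$ chain-map computation), not \eqref{horsquare}, and neither is the four-strand relation actually used here. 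Finally, the bubble cancellation is deferred in the paper as well, so your guess that it reduces to \eqref{e-c}/\eqref{d21} is plausible but unverified; the more immediate source of bubbles is the error terms in \eqref{R3} and \eqref{R2} applied to the $c_i$-terms, not the $c_i$-vs-$c_{i-1}$ comparison.
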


\begin{proof}
We first apply  \eqref{R3} to the first two summands on the right hand side,
then all diagrams without smoothings will look as shown below.
\[\xy
 (-7,8);(-7,-9); **[black][|(2)]\dir{-} ?(0)*[black][|(1)]\dir{<};
 (4,8);(4,-9); **[black][|(2)]\dir{-} ?(1)*[black][|(1)]\dir{>};
(-12,-2);(6,-2); **[black][|(1)]\dir{--};
(-4,-8)*{\scriptstyle {k}};
(2,-8)*{\scriptstyle {k}};
(-11,-9);(4,4)*{} **\crv{(-10,4) & (5,4)}?(.1)*\dir{>};
(-11,8);(4,-4)*{} **\crv{(4,-4) & (-4,-4)}?(0)*\dir{<};
\endxy\]
The strategy of the proof will be to move all
dots into the position shown by the dashed line.
Doing so for the last two summands we get
$$ c_{i-1} (e_1)_3 (e_{j-1})_4 -c_{i-1} (e_1)_1 (e_{j-1})_4
\in \Dot(\E\F^{(k)}\E\E^{(k-1)})$$
minus the second term on the left hand side of the identity
(obtained after sliding the dot though the down pointed $k$-line),
and in addition various terms with bubbles.
Let us first compare the terms without smoothings. The second
term on the right hand side will contribute
$$ -(c_{i})_{234} (e_{j-1})_4 \in \Dot(\E\F^{(k)}\E\E^{(k-1)})\, .$$
Finally, in the first term on the right hand side we replace
$(c_i)_{23}$ with $c_i -c_{i-1}(e_1)_1$ and move the dot down. 
The contribution of this term to the part without smoothings will be
$$c_i (e_{j-1})_4 - c_{i-1}(e_1)_3 (e_{j-1})_4
\in \Dot(\E\F^{(k)}\E\E^{(k-1)})$$
and in addition from moving the dot, we get the first term on the 
left hand side.  Collecting all non-smoothed terms together we get zero. 
It remains to show that all bubble terms vanish.  This easy check is
left to the reader.
\end{proof}

\end{document}